 \newtheorem{thm}{}[section]
 \newtheorem{theorem}[thm]{Theorem}
 \newtheorem{corollary}[thm]{Corollary}
 \newtheorem{lemma}[thm]{Lemma}
 \newtheorem{proposition}[thm]{Proposition}
 \theoremstyle{remark}
 \newtheorem{remark}[thm]{Remark}
 \newtheorem{example}[thm]{Example}
  \newtheorem{definition}[thm]{Definition}
 \numberwithin{equation}{section}
 \newcommand{\EE}{\ensuremath{\mathcal{E}}}
 \newcommand{\dd}{\ensuremath{\mathbf{d}}}
\newcommand{\DD}{\ensuremath{\mathcal{D}}}
 \newcommand{\ww}{\ensuremath{\mathbf{w}}}
 \newcommand{\JJ}{\ensuremath{\mathcal{J}}}
  \newcommand{\FF}{\ensuremath{\mathbb{F}}}
\newcommand{\PX}{\ensuremath{\mathcal{W}}}
\newcommand{\BV}{\ensuremath{\mathrm{BV}}}
\newcommand{\VMO}{\ensuremath{\mathrm{VMO}}}
\newcommand{\SL}{\ensuremath{\mathscr{L}}}
 \newcommand{\SSB}{\ensuremath{\mathcal{S}}}
 \newcommand{\SSS}{\ensuremath{\mathbb{S}}}
  \newcommand{\sss}{\ensuremath{\mathbf{s}}}
 \newcommand{\LL}{\ensuremath{\mathcal{L}}}
 \newcommand{\Id}{\operatorname{Id}}
 \newcommand{\NN}{\ensuremath{\mathbb{N}}}
\newcommand{\XX}{\ensuremath{\mathbb{X}}}
\newcommand{\YY}{\ensuremath{\mathbb{Y}}}
\newcommand{\yy}{\ensuremath{\mathbf{y}}}
\newcommand{\BB}{\ensuremath{\mathcal{B}}}
\newcommand{\OO}{\operatorname{\mathcal{O}}}
\newcommand{\ee}{\ensuremath{\mathbf{e}}}
\newcommand{\WW}{\ensuremath{\mathbb{W}}}
\newcommand{\vv}{\ensuremath{\mathbf{v}}}
\newcommand{\VV}{\ensuremath{\mathbb{V}}}
\newcommand{\VB}{\ensuremath{\mathcal{V}}}
\newcommand{\ZZ}{\ensuremath{\mathbb{Z}}}
\newcommand{\xx}{\ensuremath{\mathbf{x}}}
\newcommand{\zz}{\ensuremath{\mathbf{z}}}
\newcommand{\RR}{\ensuremath{\mathbb{R}}}
\newcommand{\supp}{\operatorname{supp}}
\newcommand{\Av}{\operatorname{Av}}
\begin{document}

\title[Banach spaces with highly conditional quasi-greedy bases]{Building  highly conditional almost greedy and quasi-greedy bases in Banach spaces}

\author[F. Albiac]{F. Albiac}\address{Mathematics Department\\ Universidad P\'ublica de Navarra\\
 Pamplona 31006\\  Spain} 
\email{fernando.albiac@unavarra.es}
 
\author[J. L.  Ansorena]{J. L. Ansorena}\address{Department of Mathematics and Computer Sciences\\
Universidad de La Rioja\\
 Logro\~no 26004\\ Spain} 
\email{joseluis.ansorena@unirioja.es}
 
\author[S. Dilworth]{S. J. Dilworth}\address{Department of Mathematics\\ University of South Carolina \\ Columbia
SC 29208 \\ USA} 
\email{dilworth@math.sc.edu}
 
 \author[ Denka Kutzarova]{Denka Kutzarova}
 \address{ 
 Department of Mathematics University of Illinois at Urbana-Champaign  \\ 
 Urbana, IL 61801 \\ USA \\and
Institute of Mathematics\\ Bulgarian Academy of Sciences\\
  Sofia\\ Bulgaria.
  } \email{denka@math.uiuc.edu}

\subjclass[2010]{46B15, 41A65}

\keywords{Thresholding Greedy algorithm, conditional basis, conditionality constants, quasi-greedy basis, almost greedy basis,
subsymmetric basis, superreflexivity, sequence spaces, Banach spaces}

\begin{abstract} It is known that for a conditional quasi-greedy basis $\BB$ in a Banach space $\XX$,  the associated sequence $(k_{m}[\BB])_{m=1}^{\infty}$ of its conditionality constants verifies the estimate $k_{m}[\BB]=\OO(\log m)$ and that if the reverse inequality $\log m =\OO(k_m[\BB])$ holds then $\XX$ is non-superreflexive. Indeed, it is known that a quasi-greedy basis in a superreflexive quasi-Banach space fulfils  the estimate  $k_{m}[\BB]=\OO(\log m)^{1-\epsilon}$ for some $\epsilon>0$.  However, in the existing literature one finds very few instances of spaces possessing quasi-greedy basis with  conditionality constants ``as large as possible."  Our goal in this article is to fill this gap. To that end we enhance and exploit a   technique developed  by Dilworth et al. in \cite{DKK2003} and craft  a wealth of new examples of both non-superreflexive classical Banach  spaces having quasi-greedy bases $\BB$ with $k_{m}[\BB]=\OO(\log m)$ and superreflexive classical Banach  spaces having for every $\epsilon>0$ quasi-greedy bases $\BB$ with $k_{m}[\BB]=\OO(\log m)^{1-\epsilon}$. Moreover, in most cases those bases will be almost greedy.  \end{abstract}

\thanks{F. Albiac and J.L. Ansorena are partially supported by the Spanish Research Grant \textit{An\'alisis Vectorial, Multilineal y Aplicaciones}, reference number MTM2014-53009-P. F. Albiac also acknowledges the support of Spanish Research Grant \textit{ Operators, lattices, and structure of Banach spaces},  with reference MTM2016-76808-P. S. J. Dilworth was supported by the National Science Foundation under Grant Number DMS--1361461. S. J. Dilworth and Denka Kutzarova were supported by the Workshop in Analysis and Probability at Texas A\&M University in 2017.}

\maketitle

\section{Introduction}\label{Introduction}

\noindent Let $\BB=(\xx_j)_{j=1}^\infty$ be a (Schauder) basis for a Banach space $\XX$ and let $\BB^*=(\xx_j^*)_{j=1}^\infty$ be its sequence of coordinate functionals. By default  all our bases (and basic sequences) will be assumed to be \textit{semi-normalized}, i.e., 
$0<\inf_{j\in\NN} \Vert \xx_j\Vert \le \sup_{j\in\NN} \Vert \xx_j\Vert<\infty$. 
We will denote by $S_m[\BB,\XX]$ the $m$-th partial sum projection with respect to $\BB$, i.e., 
\[
S_m[\BB,\XX](f)=\sum_{j=1}^m \xx_j^*(f) \, \xx_j, \quad f\in\XX.
\]
Given  a subset $A$ of $\NN$, the \textit{coordinate projection} on $A$ is (when well defined) the linear operator $S_A[\BB,\XX]\colon \XX\to \XX$ given by
 \[
 f\mapsto \sum_{j\in A} \xx_j^*(f) \, \xx_j.
\]
A basis $\BB=(\xx_j)_{j=1}^\infty$ is \textit{bimototone} if $\Vert S_A[\BB,\XX]\Vert\le 1$ for every  interval $A$ of integers in $\NN$. 
Every semi-normalized basis  in a Banach space $\XX$ is normalized (i.e., $\Vert x_{j}\Vert =1$ for all $j$) and bimonotone under a suitable renorming of $\XX$.

A basis $\BB$  is \textit{unconditional} if and only if
 $\sup_{A \text{ finite}} \Vert S_A[\BB,\XX]\Vert<\infty$. Thus, in some sense, the \textit{conditionality} of   $\BB$ can be measured in terms of the growth of the sequence 
\[
k_m[\BB,\XX]:=\sup_{|A|\le m} \Vert S_A[\BB,\XX]\Vert, \quad m\in\NN.
\]
Recall that a  basis $\BB=(\xx_j)_{j=1}^\infty$   is said to be {\it quasi-greedy} if it is semi-normalized  and 
 there is a constant $C$ such that 
 \begin{equation}\label{DefAG}
\Vert f - S_F[\BB,\XX](f)\Vert \le C \Vert f \Vert
\end{equation}
whenever $f\in \XX$ and  $F\subseteq \NN$ are such that $|\xx_j^*(f)|\le |\xx_k^*(f)|$ for all $j\in \NN\setminus F$ and all $k\in F$.  The least constant $C$ such that \eqref{DefAG} holds  is known as the quasi-greedy constant of the basis (see \cite{AA2017bis}*{Remark 4.2}).

The next theorem summarizes the connection that exists between  superreflexivity  and the conditionality constants of quasi-greedy  bases. 
Recall that a space $\XX$ is said to be {\it superreflexive} if every Banach space finitely representable in $\XX$ is reflexive.
\begin{theorem}[see \cites{DKK2003,GW2014,AAGHR2015,AAW2017}]\label{CharacterizationSR}  Let $\XX$ be a Banach space.
\begin{enumerate}
\item[(a)] If $\BB$ is a quasi-greedy basis for $\XX$ then $k_m[\BB,\XX]\lesssim \log m$ for $m\ge 2$. 

\item[(b)] If $\XX$ is non-superreflexive then there is a quasi-greedy basis $\BB$ for a Banach space $\YY$ finitely representable in $\XX$ with $k_m[\BB,\YY] \gtrsim \log m $  for $m\ge 2$.

\item[(c)] If $\XX$ is superreflexive and $\BB$ is quasi-greedy then there is 
$0<a<1$ such that  $k_m[\BB]\lesssim (\log m)^a$ for $m\ge 2$. 

\item[(d)] For every $0<a<1$ there is a quasi-greedy basis $\BB$ for a Banach space $\YY$ (namely, $\YY=\ell_2$) finitely representable in $\XX$  with  $k_m[\BB,\YY] \gtrsim (\log m)^a$ for $m\ge 2$.
\end{enumerate}
\end{theorem}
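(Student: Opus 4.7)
The four parts come from distinct strands of the literature and are addressed separately. Parts (a) and (c) are upper bounds on $k_m[\BB,\XX]$, while (b) and (d) are explicit constructions establishing the sharpness of these bounds.

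For part (a), my plan is to deploy the dyadic truncation method for quasi-greedy bases. Given $f\in\XX$ and $A\subseteq\NN$ with $|A|\le m$, one partitions the coordinates of $f$ indexed by $A$ into dyadic layers according to the magnitude of $|\xx_j^*(f)|$. Each layer is a near-greedy selection and so is controlled by the quasi-greedy constant via Wojtaszczyk's truncation principle, which asserts that capping the coefficients at any fixed threshold preserves the norm up to a constant. Since coefficients below a threshold of order $\|f\|/m$ contribute only a bounded amount, only $\OO(\log m)$ layers survive, yielding $\|S_A[\BB,\XX](f)\|\lesssim(\log m)\|f\|$. Part (c) refines this by invoking Pisier's renorming theorem: a superreflexive $\XX$ admits an equivalent uniformly convex norm of power type $q<\infty$, and this geometric information upgrades the sum over dyadic layers into one with power-type decay, producing the bound $(\log m)^a$ with $a=a(q)<1$.

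Part (b) is the main constructive statement for the non-superreflexive case, and I would address it following the technique of \cite{DKK2003}. Non-superreflexivity of $\XX$, via James' characterization, provides finite-dimensional blocks with arbitrarily bad summing-basis or tree-type behaviour, all finitely representable in $\XX$. Concatenating these blocks with carefully chosen weights produces a Schauder basis $\BB$ of a space $\YY$ finitely representable in $\XX$; fine-tuning the weights ensures quasi-greediness, because the tails remain well-behaved under arbitrary greedy selections, while the partial sum projection onto the first half of a block of size $m$ has norm of order $\log m$.

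Part (d) implements a Hilbert-space version of the same idea. Since $\ell_2$ is finitely representable in every infinite-dimensional Banach space by Dvoretzky's theorem, one works inside $\ell_2$ directly and produces the required bases from a Haar-like construction whose block sizes and weights are tuned to the parameter $a$. The main obstacle, and the technical heart of the argument, is maintaining quasi-greediness while pushing the conditionality as close as possible to the logarithmic ceiling; in $\ell_2$ the tight orthogonal structure makes this delicate and requires a careful recursive construction that balances block norms against the positions of the basis vectors. This is where the bulk of the real work lies, and also the step the present paper sets out to refine and generalize.
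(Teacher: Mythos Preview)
The paper does not prove Theorem~\ref{CharacterizationSR}; it is stated as a summary of known results with citations to \cite{DKK2003}, \cite{GW2014}, \cite{AAGHR2015}, and \cite{AAW2017}, and no proof follows in the text. So there is no ``paper's own proof'' to compare your proposal against.

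That said, your sketches track the cited literature reasonably well. For (a), the dyadic-layer decomposition combined with the uniform boundedness of truncation operators is exactly the argument in \cite{DKK2003}. For (c), the invocation of Pisier's power-type uniformly convex renorming is the heart of \cite{AAGHR2015}. For (d), \cite{GW2014} indeed works in $\ell_2$; their construction (cited later in the paper as Theorem~\ref{GWHilbert}) is based on Olevskii-type block systems rather than a Haar-like recursion, but your description of where the difficulty lies is accurate, and Dvoretzky's theorem is the correct reason $\ell_2$ suffices.

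Your sketch for (b) is the loosest. The argument in \cite{AAW2017} does not proceed via trees directly. Rather, non-superreflexivity is used to produce, in a space finitely representable in $\XX$, a basis of \emph{type~$P$} (cf.\ Lemma~\ref{AnsoLemma} in the present paper); from such a basis one obtains a basis $\BB_0$ with $L_m[\BB_0]\approx m$, and then the DKK method---which is precisely the machinery developed in Section~\ref{DKKMethod}---is applied to $\BB_0$ to manufacture a quasi-greedy basis with $k_m\gtrsim\log m$. Your description of ``concatenating blocks with carefully chosen weights'' and ``fine-tuning the weights ensures quasi-greediness'' is too vague to count as identifying this mechanism; the quasi-greediness comes not from weights but from the averaging projections $P_\sigma$, $Q_\sigma$ and the structure of the space $\YY[\BB,\SSS,\sigma]$.
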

Theorem~\ref{CharacterizationSR}  characterizes both the superreflexivity and the lack of superreflexivity of a Banach space $\XX$ in terms of the growth of the conditionality constants of quasi-greedy bases. It could be argued that   the quasi-greedy bases whose existence is guaranteed in parts (b) and (d) lie outside  the space $\XX$, and that,  although this approach is consistent when dealing with ``super'' properties, in truth it does  not  tackle the question of the existence of a quasi-greedy basis  with large conditionality constants in the space $\XX$ itself! Hence this discussion naturally leads to the following two questions
relative to a given Banach space:

 \noindent\textbf{Question A}. Pick a  non-superreflexive Banach space $\XX$ with a quasi-greedy basis. Is there a quasi-greedy basis $\BB$ for $\XX$ with $k_m[\BB,\XX]\approx \log m$ for $m\ge 2$?

 \noindent\textbf{Question B}. Pick a  non-superreflexive Banach space $\XX$ with a quasi-greedy basis. 
  Given $0<a<1$, is there a quasi-greedy basis $\BB$ for $\XX$ with $k_m[\BB,\XX]\gtrsim (\log m)^a$ for $m\ge 2$?
 
Questions A and B can be regarded as a development of the query initiated by Konyagin and Telmyakov in 1999 \cite{KonyaginTemlyakov1999} of finding conditional quasi-greedy bases in general Banach spaces, and which has evolved  towards the more specific quest  of finding quasi-greedy bases  ``as conditional as possible.''  The reader will find a detailed account of this process in  the papers \cites{Wo2000,DKW2002,DKK2003,Gogyan2010, GHO2013, AAW2017}.

Let us outline the state of the art of those two questions. Garrig\'os and Wojtaszczyk  proved in \cite{GW2014}   that Question B has a positive answer for  $\XX=\ell_p$, $1<p<\infty$. As for Question A, it is known that   Lindenstrauss' basic sequence in $\ell_1$, the Haar system  in $\BV(\RR^d)$ for $d\ge 2$,  and  the unit-vector system in the Konyagin-Telmyakov space  $KT(\infty,p)$ for $1<p<\infty$, are all quasi-greedy basic sequences with conditionality constants as large as possible (see \cites{GHO2013,BBGHO2018}). Moreover, in \cite{GW2014} it is proved that the answer to Question A is positive for $\ell_1\oplus\ell_2\oplus c_0$, and in \cite{AAGHR2015} that  the same holds true for mixed-norm spaces  of the form $(\bigoplus_{n=1}^\infty \ell_1^n)_q$ ($1<q<\infty$), providing this way the first-known examples of reflexive Banach spaces having quasi-greedy bases with conditionality constants as large as possible. More recently, the authors constructed in \cite{AAW2017}  the first-known examples of Banach spaces  of nontrivial type and nontrivial cotype for which the answer to Question A is positive. These   spaces are  $ \PX_{p,q}^0\oplus\PX_{p,q}^0\oplus\ell_2$, $1<p,q<\infty$,   where $\PX^0_{p,q}$ and  $\PX_{p,q}$ are the interpolation spaces 
 \begin{equation}\label{PXSpaces}
  \PX^0_{p,q}=(v_1^0,c_0)_{\theta,q}, \quad\text{and}\quad \PX_{p,q}=(v_1,\ell_\infty)_{\theta,q}, \quad p=1/(1-\theta),
  \end{equation}
   defined from the space of sequences of bounded variation
\[
v_1=\{ (a_j)_{j=1}^\infty \colon |a_1|+\sum_{j=2}^\infty |a_{j}-a_{j-1}|<\infty\},
\]
and the subspace $v_1^0$ of $v_1$ resulting from  the intersection of $v_{1}$ with $c_{0}$. 
Here and throughout this paper,  $(\XX_0,\XX_1)_{\theta,q}$ denotes the Banach space obtained by applying the real interpolation method  to the Banach couple $(\XX_0,\XX_1)$ with indices $\theta$ and $q$. 
Let us recall that, in light of \cite{DKK2003}*{Theorem 8.5}, the only quasi-greedy basis for a Banach space  whose dual has the Grothendieck Theorem Property is the unit vector basis of $c_0$. Consequently, the answer to Question A is negative for 
$c_0$.  Any other  $\LL_\infty$-space,  despite having a basis (see \cite{JRZ1971}*{Theorem 5.1}), does not have a quasi-greedy basis.
 
 In this article we develop the necessary  machinery that permits to extend  the scant list of known Banach spaces for which the answer either to Question A or to Question B is positive. Moreover, in a wide class of Banach spaces, 
 the examples of bases we provide are not only quasi-greedy but are  \textit{almost greedy}. Recall that a basis $\BB=(\xx_j)_{j=1}^\infty$ for a Banach space $\XX$ is \textit{almost greedy} if there is a constant $C$ such that
\[
\Vert f - S_F[\BB,\XX](f)\Vert \le C \Vert f - S_A[\BB,\XX](f)\Vert
\]
whenever $f\in\XX$, $|A|\le |F|<\infty$, and $|\xx_j^*(f)| \le |\xx_k^*(f)|$ for any $j\in \NN\setminus F$ and $k\in F$. Almost greedy bases were characterized in  \cite{DKKT2003} as those bases that are simultaneously quasi-greedy and democratic. In fact,  in this characterization democracy can be replaced with super-democracy. 
A basis $\BB$ is \textit{super-democratic} if there is a sequence  $(\lambda_m)_{m=1}^\infty$ such that
\[\left\Vert \sum_{j\in A} \varepsilon_j \xx_j\right\Vert \approx \lambda_{|A|}\]
 for any $A\subseteq\NN$ finite and any $(\varepsilon_j)_{j\in A}$ sequence of signs, in which case $(\lambda_m)_{m=1}^\infty$ is equivalent to its fundamental function, defined by
\[
\varphi_m[\BB,\XX]=\sup_{|A|\le m}\left\Vert \sum_{j\in A} \xx_j\right\Vert, \quad m\in\NN.
\]

 A more demanding concept than super-democracy is that of {bi-democracy}. A basis $\BB$ is said to be \textit{bi-democratic} if
\[
\varphi_m[\BB,\XX] \, \varphi_m[\BB^*,\YY]\lesssim m \text{ for } m\in \NN,
\]
where  $\YY$ is the closed linear span of the basic sequence $\BB^*$ in $\XX^*$.

 Our study includes, among other spaces, the finite direct sums  
 \[
 D_{p,q}:=\begin{cases} 
 \ell_p\oplus\ell_q  \text{ if }1\le p,q<\infty,  \\  
 \ell_p\oplus c_0 \text{  if } 1\le p<\infty \text{  and  } q=0,
  \end{cases}
 \]
 the matrix spaces 
 \[
 Z_{p,q}:=\begin{cases}
  \ell_q(\ell_p) \text{  if  }1\le p,q<\infty,  \\  
  c_0(\ell_p)  \text{  if  } 1\le p<\infty \text{  and  } q=0, \\
 \ell_q(c_0) \text{  if  } p=0 \text{  and  } 1\le q<\infty, 
 \end{cases}
 \]
 and the mixed-norm spaces of the family
 \[
B_{p,q}:=(\oplus_{n=1}^\infty \ell_p^{n})_q, \,  p\in[1,\infty],  \, q\in\{0\}\cup[1,\infty), \,q\not=0 \text{ when }p=\infty.
\]
We use  $(\oplus_{n=1}^\infty \XX_n)_q$ to denote the direct sum of the Banach spaces $\XX_n$ in the $\ell_q$-sense ($c_0$-sense if $q=0$).

In this article we shall  prove that the answer  to Question A is positive for  all non-superreflexive spaces in the aforementioned list. We also show that in all superreflexive spaces in the above list the answer to Question B is positive. These  results will appear in Section~\ref{Main}. Previously, in Sections~\ref{Preliminaries} and \ref{DKKMethod}, we introduce the tools that we will use to achieve our goal.

Throughout this article we follow standard Banach space terminology and notation as can be found, e.g.,  in \cite{AlbiacKalton2016}. In what follows we would like to single out the notation and terminology that  is more commonly employed. We deal with real or complex Banach spaces, and $\FF$ will  denote the underlying scalar field. A weight will be a sequence of positive scalars.
Given families of positive real numbers $(\alpha_i)_{i\in I}$ and $(\beta_i)_{i\in I}$, the symbol $\alpha_i\lesssim \beta_i$ for $i\in I$ means that $\sup_{i\in I}\alpha_i/\beta_i <\infty$. If $\alpha_i\lesssim \beta_i$ and $\beta_i\lesssim \alpha_i$ for $i\in I$ we say $(\alpha_i)_{i\in I}$ are $(\beta_i)_{i\in I}$
 are equivalent, and we write $\alpha_i\approx \beta_i$ for $i\in I$. Applied to Banach spaces, the symbol $\XX\approx \YY$ means that the spaces $\XX$ and $\YY$ are isomorphic, while the symbol $\XX\lesssim_c\YY$ means that  $\XX$ is isomorphic to a complemented subspace of $\YY$. 
The norm of a linear operator $T\colon \XX\to\YY$ will be denoted either by $\Vert T\Vert_{\XX\to\YY}$ or, when the Banach spaces $\XX$ and $\YY$ are clear from context, simply by $\Vert T\Vert$.
Given families of Banach spaces
$(\XX_i)_{i\in I}$ and $(\YY_i)_{i\in I}$, the symbol $\XX_i\lesssim_c \YY_i$ for $i\in I$ means that  the spaces
$\XX_i$ are uniformly isomorphic to  complemented subspaces of $\YY_i$, i.e., there are  linear operators
$L_i\colon \XX_i \to \YY_i$, $T_i\colon \YY_i \to \XX_i$ such that $T_i\circ L_i=\Id_{\XX_i}$ and $\sup_i \Vert T_i\Vert \Vert L_i\Vert<\infty$. Similarly the symbol $\XX_i\approx \YY_i$ for $i\in I$ means that Banach-Mazur distance from $\XX_i$ to $\YY_i$ is uniformly bounded. We write $\XX\oplus\YY$ for the Cartesian product of the Banach spaces $\XX$ and $\YY$ endowed with the norm 
\[
\Vert (x,y)\Vert_{\XX\oplus\YY}= \Vert x\Vert + \Vert y\Vert, \quad x\in\XX,\ y\in\YY. 
 \]
As it is customary,  we put $\delta_{j,k}=1$ if $j=k$ and $\delta_{j,k}=0$ otherwise. Given $j\in\NN$, the $j$-th unit vector is defined by $\ee_j=(\delta_{j,k})_{k=1}^\infty$ and  the \textit{unit-vector system} will be sequence $\EE:=(\ee_j)_{j=1}^\infty$.
We denote by $\ee_j^*$ the $j$-th coordinate functional  defined on $\FF^\NN$ by $(a_k)_{k=1}^\infty\mapsto  a_j$, and by $S_A\colon \FF^\NN\to \FF^\NN$ the coordinate projection on a set $A\subseteq\NN$. Given $m\in\NN$, $S_m$ will be be coordinate projection on the set $\{1,\dots,m\}$.

The linear span of a family $(x_i)_{i\in I}$ in a Banach space will be denoted by $\langle x_i \colon i\in I\rangle$, and its closed linear span by $[x_i \colon i\in I]$.  Other more specific notation  will be introduced on the spot when needed.

\section{Preliminary results}\label{Preliminaries}

\noindent Most of the ideas behind the results we include in this preliminary section have appeared more or less explicitly  in the literature before. Nonetheless, for the sake of clarity and completeness, we shall include the statements of the results we need in the form that best suits our purposes and  the sketches of their proofs. 

\begin{definition} Given a basis $\BB$ for a Banach space $\XX$,
we define the  sequence $(L_m[\BB,\XX])_{m=1}^{\infty}$    by
\begin{equation*}
L_m[\BB,\XX]=\sup\left\{ \frac{ \Vert S_A[\BB,\XX](f)\Vert}{ \Vert f \Vert } \colon \max(\supp(f))\le m, \, A\subseteq\NN\right\}.
\end{equation*}
\end{definition}
When  the space $\XX$ is clear from context we will drop it  and simply write $L_m[\BB]$. 
Likewise, we will  drop  $\XX$ from the notation of
all the other  concepts  that we introduced in Section~\ref{Introduction} involving a basis $\BB$ for a Banach space.

Notice that $\BB$ is unconditional if and only if $\sup_m L_m[\BB]<\infty$. Hence, the growth of the sequence $(L_m[\BB])_{m=1}^{\infty}$ provides also a measure of the conditionality of the basis. Since $L_m[\BB]\le k_m[\BB]$ for all $m\in\NN$, any  result establishing that the size of the members of the sequence $(L_m[\BB])_{m=1}^\infty$ is large, is   a stronger statement than the corresponding one enunciated in terms of 
$(k_m[\BB])_{m=1}^\infty$. 

The papers \cites{AAW2017,GW2014} draw attention to the fact that, in some cases, the sequence $(L_m[\BB])_{m=1}^{\infty}$ is more fit than the ``usual'' sequence of conditionality constants $(k_m[\BB])_{m=1}^\infty$ for transferring conditionality properties from a given basis to a basis constructed from it. This is the reason why we establish all the instrumental results of this section  in terms of  the conditionality constants $(L_m[\BB])_{m=1}^{\infty}$. Notice that, in contrast  to $(k_m[\BB])_{m=1}^\infty$,  the sequence $(L_m[\BB])_{m=1}^\infty$ is not necessarily doubling. This   leads us to use a doubling function in our statements. 
Recall that a function $\delta\colon[0,\infty)\to[0,\infty)$  is said to be \textit{doubling} if for some non-negative constant $C$ one has $\delta(2t)\le C \delta(t)$ for all $t\ge 0$. 

Given sequences $\BB_0=(\xx_j)_{j=1}^\infty$ and $\BB_1=(\yy_j)_{j=1}^\infty$ in Banach spaces $\XX$ and $\YY$, respectively,  their \textit{direct sum} $\BB_0 \oplus \BB_1$ will be the sequence in $\XX\times\YY$ given by
\begin{equation*}
\BB_0 \oplus \BB_1=((\xx_1,0),(0,\yy_1),(\xx_2,0),(0,\yy_2),\dots,((\xx_j,0),(0,\yy_j),\dots).
\end{equation*}
\begin{lemma}
[cf. \cite{GHO2013}]
\label{LemmaOne} Suppose that $\BB_0=(\xx_j)_{j=1}^\infty$ and $\BB_1=(\yy_j)_{j=1}^\infty$ are bases for some Banach spaces $\XX$ and $\YY$, respectively. Assume that 
 $L_m[\BB_0]\gtrsim \delta(m)$ for $m\in\NN$ for some non-decreasing   doubling function  $\delta\colon[0,\infty)\to[0,\infty)$. Then $\BB_0 \oplus \BB_1$ is a basis for $\XX\oplus\YY$ with 
$L_m[\BB_0\oplus\BB_1]\gtrsim \delta(m)$ for $m\in\NN$. Moreover:
 \begin{enumerate} 
 \item[(a)] If $\BB_0$ and $\BB_1$ are quasi-greedy so is $\BB_0\oplus\BB_1$.
 \item[(b)] If $\BB_0$ and $\BB_1$ are  super-democratic, both with fundamental function equivalent to  $(\lambda_m)_{m=1}^\infty$, then $\BB_0\oplus\BB_1$ is super-democratic with fundamental function equivalent to $(\lambda_m)_{m=1}^\infty$\end{enumerate}\end{lemma}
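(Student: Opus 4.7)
The plan is to establish the four claims in sequence, all by directly relating structures on $\XX\oplus\YY$ to those on $\XX$ and $\YY$ through the interleaving. First I would record the basic structural fact: the partial-sum projections of $\BB_0\oplus\BB_1$ split as $S_{2m}[\BB_0\oplus\BB_1](x,y)=(S_m[\BB_0](x),S_m[\BB_1](y))$ and $S_{2m-1}[\BB_0\oplus\BB_1](x,y)=(S_m[\BB_0](x),S_{m-1}[\BB_1](y))$. Uniform boundedness of the right-hand sides (inherited from $\BB_0$ and $\BB_1$), combined with the $\ell_1$-type norm on $\XX\oplus\YY$, shows that $\BB_0\oplus\BB_1$ is a Schauder basis. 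Similarly, any coordinate projection on a set $A\subseteq\NN$ decomposes into the coordinate projections of $\BB_0$ on $A_0:=\{j:2j-1\in A\}$ and of $\BB_1$ on $A_1:=\{j:2j\in A\}$.

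Next I would prove the bound $L_m[\BB_0\oplus\BB_1]\gtrsim\delta(m)$. Given $m$, set $n=\lfloor(m+1)/2\rfloor$, so $2n-1\le m$. Pick $f\in\XX$ with $\max\supp(f)\le n$ and $A\subseteq\NN$ witnessing $\Vert S_A[\BB_0](f)\Vert/\Vert f\Vert\gtrsim\delta(n)$. Then $\tilde f:=(f,0)$ satisfies $\max\supp(\tilde f)\le 2n-1\le m$, and for $\tilde A:=\{2j-1:j\in A\}$ one has $\Vert S_{\tilde A}[\BB_0\oplus\BB_1](\tilde f)\Vert=\Vert S_A[\BB_0](f)\Vert$. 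This delivers the lower bound at scale $n$; the doubling property $\delta(2n)\le C\delta(n)$ together with monotonicity gives $\delta(m)\le\delta(2n)\le C\delta(n)$, promoting the bound to scale $m$. This is the step that makes the doubling hypothesis on $\delta$ essential, and is the one place where genuine care is needed.

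For part (a), suppose $F\subseteq\NN$ is a greedy set for $f=(x,y)\in\XX\oplus\YY$, and decompose $F=F_0\sqcup F_1$ using the bijections above. For any $j\in F_0$ and $k\notin F_0$, the index $2k-1$ lies outside $F$ while $2j-1$ lies in $F$, so the greedy condition on $f$ yields $|\xx_k^*(x)|\le|\xx_j^*(x)|$; hence $F_0$ is a greedy set for $x$ with respect to $\BB_0$, and symmetrically $F_1$ is greedy for $y$. Applying the quasi-greedy estimate separately in each coordinate and summing:
\[
\Vert f-S_F[\BB_0\oplus\BB_1](f)\Vert=\Vert x-S_{F_0}[\BB_0](x)\Vert+\Vert y-S_{F_1}[\BB_1](y)\Vert\le C\Vert f\Vert,
\]
where $C$ is the larger of the two quasi-greedy constants.

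For part (b), given $A\subseteq\NN$ finite and signs $(\varepsilon_j)_{j\in A}$, the same decomposition yields
\[
\Bigl\Vert\sum_{j\in A}\varepsilon_j(\BB_0\oplus\BB_1)_j\Bigr\Vert=\Bigl\Vert\sum_{j\in A_0}\varepsilon_{2j-1}\xx_j\Bigr\Vert+\Bigl\Vert\sum_{j\in A_1}\varepsilon_{2j}\yy_j\Bigr\Vert\approx\lambda_{|A_0|}+\lambda_{|A_1|},
\]
by the super-democracy hypothesis on each factor. Monotonicity of $\lambda_m$ (immediate from super-democracy) gives $\lambda_{|A_0|}+\lambda_{|A_1|}\le 2\lambda_{|A|}$. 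For the reverse inequality, I would invoke subadditivity of the fundamental function of $\BB_0$, namely $\varphi_{m+n}[\BB_0]\le\varphi_m[\BB_0]+\varphi_n[\BB_0]$ (obtained by splitting any set of size $m+n$ into subsets of sizes $m$ and $n$ and applying the triangle inequality), and pass through the equivalence $\lambda_m\approx\varphi_m[\BB_0]$ to deduce $\lambda_{|A|}\lesssim\lambda_{|A_0|}+\lambda_{|A_1|}$. This completes the equivalence $\Vert\sum_{j\in A}\varepsilon_j(\BB_0\oplus\BB_1)_j\Vert\approx\lambda_{|A|}$, i.e., super-democracy with fundamental function equivalent to $(\lambda_m)_{m=1}^\infty$.
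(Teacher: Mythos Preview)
Your proof is correct. The paper itself omits the argument entirely, writing only ``It is similar to the proof of \cite{GHO2013}*{Proposition 6.1}, so we omit it,'' so your detailed write-up---splitting coordinate projections and greedy sets along the interleaving, transferring the $L_m$ estimate via the doubling of $\delta$, and handling super-democracy through subadditivity of the fundamental function---fills in exactly the routine verification the authors chose not to reproduce.
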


\begin{proof} It is similar to the proof of \cite{GHO2013}*{Proposition 6.1}, so we omit it. \end{proof}

Our next lemma follows an idea from \cite{Wo2000} for constructing quasi-greedy bases. To state it properly, it will be convenient to introduce some additional notation. 
Given $N\in\NN$ and a Banach space $\XX\subseteq\FF^\NN$  for which 
the unit-vector system is a basis, $\XX^{(N)}$ will be the $N$-dimensional space $[\ee_j \colon 1\le j \le N]$ regarded as a subspace of $\XX$. As it is customary, we will write 
\[\ell_p^N:=\ell_p^{(N)},\;1\le p<\infty,\quad \text{and}\quad \ell_\infty^N:=c_0^{(N)}.\]
More generally, given a basis $\BB=(\xx_j)_{j=1}^\infty$ for a Banach space $\XX$ and  $N\in\NN$ we will consider the closed linear span of the truncated finite sequence $
(\xx_{j})_{j=1}^{N}$, i.e.,
\[
\XX^{(N)}[\BB]=[\xx_j \colon 1\le j \le N].
\]

Let $\BB= (\xx_j)_{j=1}^\infty$ be a basis in a Banach space $\XX$. Given a sequence of positive integers  $(N_n)_{n=1}^\infty$ we define a sequence $(\zz_k)_{k=1}^\infty$ that we denote $\bigoplus_{n=1}^\infty
(\xx_{j})_{j=1}^{N_n}
$ in the space $\XX^\NN$    by 
\begin{equation*}
\zz_k=(\underbrace{0,\dots,0}_{r-1 \text{ times}},\xx_j,0,\dots,0,\dots),
\end{equation*}
where, for a given $k\in\NN$, the integers $r$ and $j$ are univocally determined by the relations $k=j+\sum_{n=1}^{r-1} N_n$ and $1\le j\le N_n$.

\begin{lemma}\label{LemmaTwo} Suppose that $\BB=(\xx_j)_{j=1}^\infty$ is a basis for a Banach space $\XX$ with $L_m[\BB]\gtrsim \delta(m)$ for $m\in\NN$, for some  non-decreasing   doubling function  $\delta\colon[0,\infty)\to[0,\infty)$.  
Let $(N_n)_{n=1}^\infty$ be a sequence of positive integers verifying
 \[
 M_r:=\sum_{n=1}^{r}  N_n \lesssim N_{r+1} \text{ for } r\in\NN.
 \]
Then
\begin{enumerate} 
\item[(a)] The sequence $\BB_0=\bigoplus_{n=1}^\infty 
(\xx_{j})_{j=1}^{N_n}
$ is a  basis for  the Banach space $(\bigoplus_{n=1}^\infty \XX^{(N_n)}[\BB])_p$,  $p\in \{0\}\cup[1,\infty)$,  with $L_m[\BB_0] \gtrsim \delta(m)$ for $m\in\NN$.
\item[(b)] If $\BB$ is quasi-greedy so is $\BB_0$.
\item[(c)]  If $\BB$ is super-democratic with fundamental function equivalent to  $(m^{1/p})_{m=1}^\infty$ for some $1\le p<\infty$,
so is  $\BB_0$.
\end{enumerate}
 \end{lemma}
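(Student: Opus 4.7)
My plan is to address the three parts in sequence, writing $\XX_0 := (\bigoplus_{n=1}^\infty \XX^{(N_n)}[\BB])_p$ throughout. That $\BB_0$ is a Schauder basis for $\XX_0$ is standard: since $(\xx_j)_{j=1}^{N_n}$ is a basis for $\XX^{(N_n)}[\BB]$ with basis constant no larger than that of $\BB$, the partial-sum operators of $\BB_0$ decompose as the identity on the completed earlier blocks, a partial-sum operator of $\BB$ on the active block, and zero on the later ones; the $\ell_p$-structure of the direct sum makes these uniformly bounded.

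For the lower bound in (a), fix $m\in\NN$ and let $r$ be the smallest index with $m\le M_r$, where $M_0:=0$. If $r=1$, then a witness $f\in\XX^{(m)}[\BB]$ for $L_m[\BB]\gtrsim\delta(m)$ embedded in block $1$ has $\BB_0$-support in $\{1,\dots,m\}$, whence $L_m[\BB_0]\ge L_m[\BB]\gtrsim \delta(m)$. If $r\ge 2$, set $s:=m-M_{r-1}\in[1,N_r]$ and consider two candidate witnesses: a witness of $L_{N_{r-1}}[\BB]$ embedded in block $r-1$ (whose $\BB_0$-support ends at $M_{r-1}\le m$), and a witness of $L_s[\BB]$ embedded in block $r$ (whose $\BB_0$-support ends at exactly $m$). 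Since $\delta$ is non-decreasing,
\[
L_m[\BB_0]\gtrsim \max(\delta(N_{r-1}),\delta(s))=\delta(\max(N_{r-1},s)).
\]
The hypothesis applied one index back gives $M_{r-2}\lesssim N_{r-1}$, so $M_{r-1}=M_{r-2}+N_{r-1}\lesssim N_{r-1}$. A brief case split on whether $s\ge N_{r-1}$ or $s<N_{r-1}$ yields $m\lesssim \max(N_{r-1},s)$ in both cases, and iterating the doubling of $\delta$ then gives $\delta(m)\lesssim \delta(\max(N_{r-1},s))\lesssim L_m[\BB_0]$.

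For (b), suppose $F\subseteq\NN$ is a greedy set for $f=(f_n)_{n=1}^\infty\in\XX_0$ with respect to $\BB_0$. Decompose $F=\bigsqcup_n F_n$ along blocks. Each $F_n$ is then a greedy set for $f_n$ with respect to $\BB$: the $\BB$-coordinates of $f_n$ are exactly the $\BB_0$-coordinates of $f$ restricted to block $n$, the minimum of $|\xx_k^*(f_n)|$ over $F_n$ is bounded below by the minimum over $F$, and the maximum of $|\xx_j^*(f_n)|$ over the block-$n$ indices outside $F_n$ is bounded above by the maximum of $|\xx_j^*(f)|$ outside $F$. Quasi-greediness of $\BB$ gives $\Vert f_n-S_{F_n}[\BB](f_n)\Vert \le C\Vert f_n\Vert$ for every $n$, and assembling in the $\ell_p$-sum (sup when $p=0$) yields $\Vert f-S_F[\BB_0](f)\Vert_{\XX_0}\le C\Vert f\Vert_{\XX_0}$. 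For (c), fix a finite $A\subseteq\NN$ and signs $(\varepsilon_k)_{k\in A}$, decompose $A=\bigsqcup_n A_n$, and apply the super-democracy of $\BB$ (with fundamental function equivalent to $(m^{1/p})_{m=1}^\infty$) inside each block to obtain
\[
\left\Vert \sum_{k\in A}\varepsilon_k \zz_k \right\Vert_{\XX_0}^p=\sum_{n=1}^\infty \left\Vert \sum_{k\in A_n}\varepsilon_k \zz_k \right\Vert_{\XX_0}^p \approx \sum_{n=1}^\infty |A_n|=|A|,
\]
so $\BB_0$ is super-democratic with fundamental function equivalent to $(m^{1/p})_{m=1}^\infty$.

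The main technical obstacle is the lower bound in (a): since $(L_m[\BB_0])_m$ is not a priori doubling, one cannot simply transplant a single witness of $L_m[\BB]$ into a block of size close to $m$. The hypothesis $M_r\lesssim N_{r+1}$ resolves this in a twofold way, simultaneously controlling the tail $M_{r-1}$ of earlier blocks by $N_{r-1}$, and ensuring that every $m\in(M_{r-1},M_r]$ is comparable to $\max(N_{r-1},m-M_{r-1})$, so that the doubling of $\delta$ transfers the block-level witness to the required lower bound $\delta(m)$.
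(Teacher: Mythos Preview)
Your proof is correct and follows essentially the same approach as the paper. The paper's argument for the lower bound in (a) is the same two-block witness trick: with $M_r\le m<M_{r+1}$ it uses block $r$ (of size $N_r$) and the initial $m-M_r$ indices of block $r+1$, which is your argument shifted by one index; your separate treatment of $r=1$ and the explicit case split on $s\gtrless N_{r-1}$ are cosmetic differences. The paper simply declares (b), (c), and the basis property in (a) to be clear, so your additional detail there is welcome but not a divergence in method.
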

 
\begin{proof}
It is clear that $\BB_0$ is a quasi-greedy basis with the same quasi-greedy constant as $\BB$, hence we need only take care of obtaining an estimate for  $(L_m[\BB_0])_{m=1}^{\infty}$. 
Let $C_1$ be such that $C_1 L_m[\BB]\ge  \delta(m)$ for all $m\in\NN$. Let $C_2>1$ be such that
$M_r\le C_2 N_r$ for all $r\in\NN$. Since $\delta$ is doubling there is a constant $C_3$ such that  $C_3 \delta(m)\ge \delta (2 C_2 m)$ for all $m\in\NN$. 
Given $m\ge M_1$, pick $r\in\NN$ such that
$M_{r}\le  m < M_{r+1}$. We have
\begin{align*}
C_1 C_3 L_m[\BB_0]
&\ge C_1 C_3 \max\{L_{N_r}[\BB], L_{m-M_r}[\BB]\}\\
&\ge C_3\max\{\delta(N_r), \delta(m-M_r)\}\\
&\ge \max\{\delta(2 C_2 N_r), \delta(2C_2(m-M_r))\}\\
&\ge \max\{\delta(2 M_r), \delta(2m-2M_r))\}\\
&=\delta(  \max\{ 2 M_r, 2m-2M_r\})\\
&\ge \delta(m),
\end{align*}
as desired. 
\end{proof}

The spaces $\PX^0_{p,q}$  and $\PX_{p,q}$ ($1<p<\infty$, $1\le q <\infty$) defined  in \eqref{PXSpaces} were introduced and studied by Pisier and Xu \cite{PisierXu1987}.  It is verified that  
$\PX^0_{p,q} \approx \PX_{p,q}$. Moreover, when $q>1$  these spaces have nontrivial type and nontrivial cotype  and  they are pseudo-reflexive. 

Our next proposition is a new addition  to the study of Pisier-Xu spaces, which will be used below. Recall that given $1\le q<\infty$ and a scalar sequence $\ww=(w_n)_{n=1}^\infty$, the  Lorentz sequence space $d_q(\ww)$ consists of all sequences $f$ in $c_{0}$ whose non-increasing rearrangement $(a_n^*)_{n=1}^\infty$ verifies
 \[
\Vert f \Vert_{d_q(\ww)}= \left( \sum_{n=1}^\infty (a_n^*)^q w_n \right)^{1/q} <\infty.
\]
In  the case when $\ww=(n ^{q/p-1})_{n=1}^\infty$  for some $1\le p<\infty$ we have that $\ell_{p,q}:=d_q(\ww)$ is the classical sequence Lorentz space of indices $p$ and $q$.

\begin{proposition}\label{PXLorentz}Let $1<p<\infty$ and $1\le q <\infty$.
Then 
\[\ell_{p,q}\lesssim_c \PX_{p,q}^0.\] In fact, $(\ee_{2j-1})_{j=1}^\infty$ is a complemented basic sequence
isometrically equivalent to the unit vector system in $\ell_{p,q}$.
\end{proposition}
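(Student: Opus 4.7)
The plan is to realise $[\ee_{2j-1}]$ in $\PX^0_{p,q}$ as the image of $\ell_{p,q}$ under a bounded operator $T$ that arises by real interpolation from bounded operators on the endpoint couples, and to exhibit a bounded left inverse $R$ of $T$ (again built by interpolation) that produces the desired projection. The key observation is that $\PX^0_{p,q} = (v_1^0, c_0)_{\theta, q}$ and $\ell_{p,q} = (\ell_1, c_0)_{\theta, q}$ with $p = 1/(1-\theta)$, so it is enough to verify endpoint estimates and invoke exactness of the real interpolation functor.

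Concretely, I would introduce the ``odd embedding'' $T \colon \FF^\NN \to \FF^\NN$ given by $T((a_j)) = \sum_{j \ge 1} a_j \ee_{2j-1}$ and its candidate left inverse, the ``discrete derivative'' $R \colon \FF^\NN \to \FF^\NN$ given by $R(f)_j = f_{2j-1} - f_{2j-2}$ (with the convention $f_0 = 0$). A short computation exploiting the interlacing of zeros in $Ta$ yields $\|Ta\|_{v_1} = 2\|a\|_{\ell_1}$ and $\|Ta\|_{c_0} = \|a\|_{c_0}$; the observation that $\{|f_{2j-1} - f_{2j-2}|\}_{j\ge 1}$ is a disjoint subfamily of the differences summing to $\|f\|_{v_1}$ gives $\|Rf\|_{\ell_1} \le \|f\|_{v_1}$, while the triangle inequality gives $\|Rf\|_{c_0} \le 2\|f\|_{c_0}$. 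Real interpolation then delivers bounded operators $T \colon \ell_{p,q} \to \PX^0_{p,q}$ and $R \colon \PX^0_{p,q} \to \ell_{p,q}$.

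An elementary coordinate check confirms $R \circ T = \mathrm{Id}_{\ell_{p,q}}$, so $T \circ R$ is a bounded projection on $\PX^0_{p,q}$ with range $[\ee_{2j-1}]$, and $T$ becomes an isomorphism of $\ell_{p,q}$ onto this complemented subspace, establishing $\ell_{p,q} \lesssim_c \PX^0_{p,q}$ together with the basic equivalence of $(\ee_{2j-1})_{j=1}^\infty$ with the unit vectors of $\ell_{p,q}$ up to a constant depending only on $\theta$. The main technical hurdle will be promoting this bi-Lipschitz equivalence to the \emph{isometric} equivalence stated in the proposition. Combining the endpoint estimates gives the two-sided K-functional bracketing
\[
K_{(\ell_1, c_0)}(t/2, a) \;\le\; K_{(v_1^0, c_0)}(t, Ta) \;\le\; 2\, K_{(\ell_1, c_0)}(t/2, a),
\]
and the change of variables $t = 2s$ in the defining integral reduces the isometric statement to the exact identification of K-functionals on the odd-supported subspace. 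I expect this sharpening to follow by arguing that any optimal decomposition $Ta = g + h$ in $(v_1^0, c_0)$ may be replaced, without increasing the cost, by one with $g$ and $h$ both supported on odd indices; in that situation the factor $2$ appearing in the $c_0 \to c_0$ bound for $R$ disappears, and $R$ reconstructs a matching optimal decomposition of $a$ in $(\ell_1, c_0)$, which, after the standard renormalisation of the real interpolation norm on $\ell_{p,q}$, yields the isometric identification.
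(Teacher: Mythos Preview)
Your approach is essentially the paper's: the same ``odd embedding'' $L=T$ and a left inverse built from consecutive differences, with endpoint bounds interpolated to obtain the bounded operators $\ell_{p,q}\to\PX^0_{p,q}$ and $\PX^0_{p,q}\to\ell_{p,q}$ and hence the complemented embedding. Your left inverse $R(f)_j=f_{2j-1}-f_{2j-2}$ differs cosmetically from the paper's $T(f)_j=f_{2j-1}-f_{2j}$, and your computation $\Vert Ta\Vert_{v_1}=2\Vert a\Vert_1$ is in fact the correct one (the paper's stated bound $\Vert L\colon\ell_1\to v_1^0\Vert\le 1$ is a slip, since already $\Vert L\ee_1\Vert_{v_1}=2$); this does not affect the argument.

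On the isometry, note that the paper's own proof establishes only the complemented isomorphic embedding with explicit constants, not the isometric claim in the statement. Your attempt to close this gap via $K$-functionals is more than the paper does, but the key step---that for odd-supported $f$ an optimal decomposition $f=g+h$ in $(v_1^0,c_0)$ may be taken with $g,h$ odd-supported---remains an ``expectation'' rather than a proof; the naive odd-restriction can increase $\Vert g\Vert_{v_1}$ (e.g.\ $g=(1,1,1,0,\dots)$ has $\Vert g\Vert_{v_1}=2$ but its odd restriction has norm $4$), so a more careful replacement would be required. Even granting that step, your own computation yields $\Vert Ta\Vert_{\PX^0_{p,q}}=2^{1/p}\Vert a\Vert_{\ell_{p,q}}$, so ``isometric'' here should be read up to normalization.
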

\begin{proof} Put $p=1/(1-\theta)$. Consider the linear maps $L,T\colon \FF^\NN\to \FF^\NN$ defined by
\begin{align}
L((a_j)_{j=1}^\infty)&=(a_1,0,a_2,0,\dots,a_j,0,a_{j+1},0,\dots), \label{Lifting}\\
T((a_j)_{j=1}^\infty)&=(a_1-a_2,a_3-a_4,\dots, a_{2j-1}-a_{2j}, \dots).\label{Retraction}
\end{align}
We have $\Vert L \colon  \ell_1 \to v_1^0\Vert\le 1, \Vert L \colon c_0 \to c_0\Vert\le 1, \Vert T \colon v_1^0 \to \ell_1\Vert \le 1$, and
$\Vert T \colon  c_0 \to c_0\Vert \le 2$. Taking into account that \[
(\ell_1,c_0)_{\theta,q}=(\ell_1,\ell_\infty)_{\theta,q}=\ell_{p,q}\] (see, e.g., \cite{BenSha1988}*{Theorem 1.9}),   interpolation gives
$\Vert L \colon \ell_{p,q} \to \PX^0_{p,q} \Vert\le 1$ and $\Vert T \colon \PX^0_{p,q}  \to \ell_{p,q} \Vert\le 2^{\theta}$.
Since $T (L(f))=f$ for every $f\in\FF^\NN$ we are done.
\end{proof}

\begin{corollary} Let $1<p<\infty$ and $1\le q <\infty$. Then
$\ell_q \lesssim_c \PX^0_{p,q}$.
\end{corollary}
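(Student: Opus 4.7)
The plan is to combine the Proposition with the classical fact that $\ell_q$ sits complementably in $\ell_{p,q}$. Since the relation $\lesssim_c$ is transitive and the Proposition already yields $\ell_{p,q}\lesssim_c \PX^0_{p,q}$, it suffices to exhibit a complemented copy of $\ell_q$ inside $\ell_{p,q}$ and then compose the two embeddings.

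To produce such a copy, I would partition $\NN$ into consecutive intervals $(B_n)_{n=1}^\infty$ of cardinality $N_n := |B_n| = 2^{n-1}$ and introduce the normalized block vectors
\[
f_n := N_n^{-1/p}\sum_{j\in B_n}\ee_j,\qquad n\in\NN.
\]
A direct evaluation of the Lorentz norm using the weight $w_k=k^{q/p-1}$ shows $\|f_n\|_{\ell_{p,q}}\approx 1$. The averaging operator
\[
P(g)(j) := N_n^{-1}\sum_{k\in B_n} g(k),\quad j\in B_n,
\]
is an idempotent that is trivially contractive on $\ell_1$ and on $\ell_\infty$; by real interpolation (using $(\ell_1,\ell_\infty)_{1-1/p,q}=\ell_{p,q}$) $P$ is bounded on $\ell_{p,q}$. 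Hence the range of $P$, which equals $[f_n\colon n\in\NN]$, is a complemented subspace of $\ell_{p,q}$.

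The substantive step is then the identification $[f_n]\approx\ell_q$, namely
\[
\Bigl\|\sum_n c_n f_n\Bigr\|_{\ell_{p,q}}\approx\Bigl(\sum_n |c_n|^q\Bigr)^{1/q}.
\]
When $(|c_n|)$ is non-increasing, the non-increasing rearrangement of $h=\sum_n c_n f_n$ is constant on successive stretches of length $N_n$ beginning at position $M_{n-1}+1$, where $M_n := \sum_{k\le n} N_k \approx 2^n$. Summing the weights $k^{q/p-1}$ across each stretch and invoking the dyadic growth yields $\sum_{k=M_{n-1}+1}^{M_n} k^{q/p-1}\approx N_n^{q/p}$, which telescopes to $\|h\|_{\ell_{p,q}}^q\approx\sum_n |c_n|^q$.

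The main obstacle is the case of non-monotone coefficients: because the $N_n$ are all distinct, permuting $(c_n)$ reshuffles the pairing of each $|c_n|$ with its block size, and the structure of the non-increasing rearrangement of $|h|$ is no longer read off from the previous computation. The lacunarity $N_n=2^{n-1}$ is what rescues the argument: for any permutation $\pi$ of $\NN$ the partial sums $T_k:=\sum_{j\le k} N_{\pi(j)}$ satisfy $T_k\approx \max_{j\le k} N_{\pi(j)}$, so the contribution of the permutation is absorbed into an absolute constant. This lets one transfer the decreasing-case computation to arbitrary $(c_n)\in\ell_q$ and completes the proof.
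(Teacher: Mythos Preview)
Your overall strategy coincides with the paper's: both reduce to showing $\ell_q\lesssim_c\ell_{p,q}$ and then invoke the Proposition. Where you diverge is in how that complemented embedding is obtained. The paper simply quotes \cite{LinTza1972}*{Proposition 4} to get $\ell_q\lesssim_c\ell_{p,q}$ when $q\le p$, and for $q>p$ passes to the dual pair $(p',q')$ (where $q'\le p'$), applies the same citation, and then dualizes via \cite{Allen1978}. Your route is more self-contained: you build the copy of $\ell_q$ explicitly as the span of normalized dyadic blocks and complement it by the averaging projection, which is bounded by interpolation. That is a perfectly good alternative, and it has the advantage of treating all $1<p<\infty$, $1\le q<\infty$ in one stroke without a duality detour.

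The one place that needs tightening is your last paragraph. The statement that ``$T_k\approx\max_{j\le k}N_{\pi(j)}$, so the contribution of the permutation is absorbed into an absolute constant'' does not by itself finish the computation: the rearrangement of $h=\sum_n c_n f_n$ is governed by the ordering of the values $|c_n|N_n^{-1/p}$, not of $|c_n|$, and the telescoping sum $\sum_{k=T_{j-1}+1}^{T_j}k^{q/p-1}$ does not in general reduce to $N_{\sigma(j)}^{q/p}$ under an arbitrary permutation $\sigma$. A clean way to close the gap is to use the dyadic form of the Lorentz norm, $\|g\|_{p,q}^q\approx\sum_{k\ge0}2^{kq/p}(g^*_{2^k})^q$, together with the observation (a direct consequence of $N_n=2^{n-1}$) that
\[
h^*_{2^k}=\sup_{n\ge k+1}|c_n|\,N_n^{-1/p}.
\]
Setting $b_n=|c_n|2^{-(n-1)/p}$ this gives $\|h\|_{p,q}^q\approx\sum_{m\ge1}2^{(m-1)q/p}\bigl(\sup_{n\ge m}b_n\bigr)^q$; the lower bound $\ge\sum_m|c_m|^q$ is immediate, and the upper bound follows from $\bigl(\sup_{n\ge m}b_n\bigr)^q\le\sum_{n\ge m}b_n^q$ and the geometric sum $\sum_{m\le n}2^{(m-1)q/p}\approx 2^{(n-1)q/p}$. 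With this in place your argument is complete.
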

\begin{proof}In light of Proposition~\ref{PXLorentz}, it suffices to see that $\ell_q\lesssim_c \ell_{p,q}$.
 By \cite{LinTza1972}*{Proposition 4} we have $\ell_q\lesssim_c \ell_{p,q}$
if $q\le p$ and  $\ell_{q'} \lesssim_c \ell_{p',q'}$ otherwise. We conclude the proof by dualizing  (see \cite{Allen1978}*{Theorem 1}).
\end{proof}

\section{
The Dilworth-Kalton-Kutzarova method, revisited
}\label{DKKMethod}
\noindent
Recall that a basis is said to be \textit{subsymmetric} if it is unconditional and equivalent to all of its subsequences.
 If $(\xx_j)_{j=1}^\infty$ is a subsymmetric  basis in a Banach space  $(\SSS,\Vert \cdot\Vert_\SSS)$ then there is a constant $C$ such that
\[
C^{-1}\left\Vert\sum_{j=1}^\infty a_j \xx_j \right\Vert_\SSS
\le \left\Vert\sum_{j=1}^\infty \varepsilon_j a_j \xx_{\phi(j)} \right\Vert_\SSS
\le C\left\Vert\sum_{j=1}^\infty a_j \xx_j \right\Vert_\SSS
\]
for all $\sum_{j=1}^\infty a_j \xx_j\in\SSS$, all  sequence of signs $(\varepsilon_j)_{j=1}^\infty$, and all increasing 
maps $\phi$,
in which case the basis is said to be $C$-subsymmetric.
Every subsymmetric basis is quasi-greedy and super-democratic, hence almost greedy. 

A \textit{subsymmetric} 
\textit{sequence space} will be a Banach space $\SSS\subseteq\FF^\NN$ for which the unit-vector system is a $1$-subsymmetric 
basis. Note that every Banach space equipped with a subsymmetric 
basis is isomorphic to a subsymmetric 
sequence space   (see, e.g., 
\cite{AnsoQM}).

Given $f\in\FF^\NN$ and $A\subseteq\NN$ finite, we put
\[
\Av(f,A)= \frac{1}{|A|} \left(\sum_{j\in A}  a_j\right).
\]
The 
\textit{averaging projection} with respect to a sequence $\sigma=(\sigma_n)_{n=1}^\infty$  of disjoint finite subsets of $\NN$ is the map $P_\sigma\colon \FF^\NN\to\FF^\NN$ defined by
\[
(a_j)_{j=1}^\infty\mapsto (b_k)_{k=1}^\infty, \quad b_k =\Av(f,\sigma_n)
\text{ if }k\in \sigma_n.
\]
We denote by $Q_{\sigma}=\Id_{\FF^\NN}-P_{\sigma}$ its complementary projection.
A sequence  $(\sigma_n)_{n=1}^\infty$  of subsets of $\NN$
is said to be  \textit{ordered} when\[
\max(\sigma_{n})<\min(\sigma_{n+1}), \quad n\in\NN.
\] 
Note that any ordered sequence consists of disjoint finite subsets.
Let us recall the following result (see \cite{LinTza1977}*{Propostion 3.a.4}).
\begin{theorem}\label{AveragingSubsymmetric}
 Let $(\SSS,\Vert \cdot\Vert_\SSS)$ be a subsymmetric  sequence space and $\sigma$  an ordered  sequence of subsets of $\NN$. Then $P_\sigma$ is bounded from $\SSS$ into $\SSS$ with $\Vert P_\sigma\Vert_{\SSS\to\SSS}\le 2$.
 \end{theorem}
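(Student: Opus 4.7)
My plan is to represent $P_\sigma$ as a uniform average over a family of ``blockwise cyclic shifts'' of $f$ and then to bound each such shift by $2\Vert f\Vert_\SSS$ by splitting it into two pieces, each of which is (a relabeled version of) a coordinate projection of $f$.

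Writing $\sigma_n=\{i_{n,1}<\dots<i_{n,N_n}\}$ and assuming by density that $f=\sum_j a_j\ee_j\in\SSS$ is finitely supported, for every tuple $\vec{s}=(s_n)_n$ with $0\le s_n<N_n$ I would define $\tau_{\vec{s}}f$ by cyclically shifting the coefficients inside each block $\sigma_n$ by $s_n$ positions, i.e.\
\[
(\tau_{\vec{s}}f)(i_{n,k})=a_{i_{n,r}},\quad r\equiv k+s_n\pmod{N_n},\ 1\le r\le N_n.
\]
A direct computation shows that the uniform average over $\vec{s}\in\prod_n\{0,\dots,N_n-1\}$ replaces the coefficient at each $i_{n,k}$ by $\Av(f,\sigma_n)$, so $P_\sigma f=\mathbb{E}_{\vec{s}}\,\tau_{\vec{s}}f$.

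Next I would split $\tau_{\vec{s}}f=f'_{\vec{s}}+f''_{\vec{s}}$, where $f'_{\vec{s}}$ collects those shifted values that do \emph{not} wrap around the end of a block and $f''_{\vec{s}}$ collects the wrap-around values. Then $f'_{\vec{s}}$ is supported on $A'=\bigcup_n\{i_{n,1},\dots,i_{n,N_n-s_n}\}$ and the list of its nonzero coefficients read in increasing order of positions is exactly $(a_{i_{n,s_n+1}},\dots,a_{i_{n,N_n}})_n$. Because $\sigma$ is \emph{ordered}, the target set $B'=\bigcup_n\{i_{n,s_n+1},\dots,i_{n,N_n}\}$ is itself an increasing subset of $\NN$ with $|B'|=|A'|$, and the coefficient list of $S_{B'}f$ read in position order coincides with the list above. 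Invoking the $1$-subsymmetry of the unit vector basis of $\SSS$ to move the support from $A'$ to $B'$ yields $\Vert f'_{\vec{s}}\Vert_\SSS=\Vert S_{B'}f\Vert_\SSS$, and $1$-unconditionality then gives $\Vert S_{B'}f\Vert_\SSS\le\Vert f\Vert_\SSS$. Applying the same recipe to $f''_{\vec{s}}$, supported on $A''=\bigcup_n\{i_{n,N_n-s_n+1},\dots,i_{n,N_n}\}$, with target set $B''=\bigcup_n\{i_{n,1},\dots,i_{n,s_n}\}$ (also an increasing subset of $\NN$ by the ordering hypothesis), gives $\Vert f''_{\vec{s}}\Vert_\SSS\le\Vert f\Vert_\SSS$. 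The triangle inequality then forces $\Vert\tau_{\vec{s}}f\Vert_\SSS\le 2\Vert f\Vert_\SSS$ uniformly in $\vec{s}$, and averaging over $\vec{s}$ completes the proof.

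The only delicate point is the verification that the ``target'' sets $B'$ and $B''$ are \emph{increasing} subsequences of $\NN$, so that the $1$-subsymmetry of $\SSS$ can be legitimately invoked. This is exactly where the hypothesis that $\sigma$ be ordered---not merely disjoint---enters: the inequality $\max(\sigma_n)<\min(\sigma_{n+1})$ guarantees that the segments one keeps from consecutive blocks remain globally monotone. Once this combinatorial checkpoint is cleared, the rest is a mechanical application of the defining properties of the unit vector basis of a subsymmetric sequence space.
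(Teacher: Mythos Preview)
Your argument is correct and is precisely the classical proof of Proposition~3.a.4 in Lindenstrauss--Tzafriri, which is what the paper cites rather than reproving. The representation of $P_\sigma$ as a uniform average of blockwise cyclic shifts, the splitting of each shift into a non-wrapping and a wrapping part, and the use of $1$-subsymmetry to identify each part with a coordinate projection of $f$ are exactly the ingredients of that proof; your explicit check that the ordering hypothesis guarantees monotonicity of the target sets $B'$ and $B''$ is the right place to spend care, and it is done correctly.
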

An \textit{ordered partition} of $\NN$ will be an ordered sequence $(\sigma_n)_{n=1}^\infty$  of subsets of $\NN$ with
$\NN=\cup_n \sigma_n$. Notice that ordered partitions consist of integer intervals.
In fact, if  $\sigma=(\sigma_n)_{n=1}^\infty$ and we let
\[
M_r=\sum_{n=1}^r |\sigma_n|,\quad r\in\NN\cup\{0\},
\]
we have $\sigma_n=[1+M_{n-1},M_n]$.
 Theorem~\ref{AveragingSubsymmetric} immediately gives
\begin{equation}
\label{ComplementAveragingSubsym}
\Vert P_\sigma (f)\Vert_\SSS \le 2 \Vert f \Vert_\SSS,\quad
 \Vert Q_\sigma (f)\Vert_\SSS \le 3 \Vert f \Vert_\SSS, \quad f\in c_{00}
\end{equation}
for any ordered partition $\sigma$ and any subsymmetric sequence space $\SSS$.
Another consequence of Theorem~\ref{AveragingSubsymmetric} is that
subsymmetric bases are bi-democratic.
To be precise for any subsymmetric sequence space $(\SSS,\Vert \cdot\Vert_\SSS)$ we have  (see \cite{LinTza1977}*{Proposition 3.a.6})
\begin{equation}
\label{bidemocracy}
 m\le 
\left\Vert \sum_{j\in 1}^m  \ee_j \right\Vert_{\SSS}
\left\Vert \sum_{j=1}^m   \ee_j^* \right\Vert_{\SSS^*}
\le
 2 m , \quad m\in\NN.
\end{equation}
 
 \begin{remark}\label{SubSymSquare}Notice that if  $\BB$ is a subsymmetric basis for a Banach space $\SSS$ then $\SSS\oplus\SSS\approx\SSS$. To see this it suffices to consider the subsequences $\BB_o$ and $\BB_e$  
 consisting, respectively, of the odd and the even terms of $\BB$. Then,  on the one hand, we have that $\BB_o\oplus\BB_e$ is equivalent to $\BB$ and, on the other hand, it is equivalent to $\BB\oplus\BB$. 
 \end{remark}
 
Given a subsymmetric sequence space $(\SSS,\Vert \cdot\Vert_\SSS)$ and an ordered partition  $\sigma=(\sigma_n)_{n=1}^\infty$
of $\NN$ we will put
\begin{equation}\label{DemocracySubSym}
\Lambda_{m}=\left\Vert \sum_{j=1}^m \ee_j\right\Vert_\SSS, \quad 
\Lambda^*_{m}=\frac{m}{\Lambda_m}, \quad m\in\NN \text{, and }
\end{equation}
\begin{equation}\label{BOSforAP}
\vv_n=\frac{1}{\Lambda_{|\sigma_n|}} \sum_{j\in\sigma_n} \ee_j,\quad
\vv_n^*=  \frac{1}{\Lambda^*_{|\sigma_n|}}  \sum_{j\in\sigma_n} \ee^*_j, \quad n\in\NN.
\end{equation}
We have that $(\vv_n)_{n=1}^\infty$ is a normalized basic sequence in $\SSS$ and 
\[
\
P_{\sigma}(f)=\sum_{n=1}^\infty  \vv_n^*(f) \, \vv_n, \quad f\in\FF^\NN.
\]
Consequently, by  
inequality \eqref{ComplementAveragingSubsym},
\begin{equation}\label{equivalentnorm}
\
\Vert f\Vert_{\SSS} \le  
\Vert Q_{\sigma}(f)\Vert_{\SSS} + \left\Vert \sum_{n=1}^\infty \vv_n^*(f) \, \vv_n\right\Vert_\SSS
 \le 5 \Vert f\Vert_\SSS, \quad f\in\SSS.
\end{equation}
That is, the middle term in \eqref{equivalentnorm} defines an equivalent norm for $\SSS$. Replacing  the  basic sequence $(\vv_n)_{n=1}^\infty$ with an arbitrary  basis provides a method,  invented in \cite{DKK2003}, for constructing     Banach spaces with special types of bases.
Let us give a precise description of this  method.  Suppose  $\BB=(\xx_n)_{n=1}^\infty$ is a semi-normalized basis for a Banach space $(\XX,\Vert \cdot\Vert_\XX)$.
 Consider the linear mapping $H\colon c_{00} \to c_{00} \times \XX$ given by
 \begin{equation}\label{eq:1}
 f\mapsto   \left( Q_{\sigma}(f), \sum_{n=1}^\infty  \vv_n^*(f)\, \xx_n \right),
\end{equation}
and define a gauge on $c_{00}$,  by 
\[
\left\Vert  f \right\Vert_{\BB,\SSS,\sigma}
=\Vert H(f)\Vert_{\SSS\oplus\XX}
=\Vert  Q_\sigma (f) \Vert_\SSS 
+
\left\Vert \sum_{n=1}^\infty \vv_n^*(f)\, \xx_n\right\Vert_\XX.
\]

The authors of \cite{DKK2003} consider the sequence Banach space obtained by the completion of $(c_{00},\left\Vert  \cdot \right\Vert_{\BB,\SSS,\sigma})$. In order to justify the validity of this procedure,  called throughout the \textit{DKK-method}  for short, we need a couple of lemmas.
\begin{lemma}\label{lem: normequivalence5}Let $\BB$ be a (semi-normalized) basis of a Banach space $\XX$. Let  $\SSS$ be a subsymmetric sequence space  and $\sigma$ be an ordered partition of $\NN$. Then there are constants $C_1$ and $C_2$ such that
\[
C_1^{-1} \Vert f \Vert_{\SSS} \le
\Vert f\Vert_{\BB,\SSS,\sigma}
\le C_2 \Vert f \Vert_{\SSS},
\]
for all sequences $f$ supported on $\sigma_k$ for some $k\in\NN$. Moreover, if $\BB$ is normalized, then $C_1=1$ and $C_2=5$.
\end{lemma}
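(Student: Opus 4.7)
My plan is to exploit that when $f$ is supported on a single block $\sigma_k$, both the averaging projection $P_\sigma(f)$ and the biorthogonal evaluations $\vv_n^*(f)$ collapse to very concrete quantities. Setting $m:=|\sigma_k|$ and writing $a=\Av(f,\sigma_k)$, one has $P_\sigma(f)=a\sum_{j\in\sigma_k}\ee_j$, while $\vv_n^*(f)=0$ for $n\ne k$ and $\vv_k^*(f)=(m/\Lambda_m^*)a=\Lambda_m a$ by definition \eqref{DemocracySubSym}--\eqref{BOSforAP}. Hence the $\XX$-summand of the gauge is $\bigl\|\sum_n \vv_n^*(f)\xx_n\bigr\|_\XX=\Lambda_m|a|\,\|\xx_k\|_\XX$. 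On the other hand, the $1$-subsymmetry of $\SSS$ gives $\bigl\|\sum_{j\in\sigma_k}\ee_j\bigr\|_\SSS=\Lambda_m$, so $\|P_\sigma(f)\|_\SSS=\Lambda_m|a|$. Thus the two summands of $\|f\|_{\BB,\SSS,\sigma}$ are $\|Q_\sigma(f)\|_\SSS$ and $\|\xx_k\|_\XX\,\|P_\sigma(f)\|_\SSS$.

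Given this reduction, the upper bound falls out of \eqref{ComplementAveragingSubsym}: that inequality gives $\|Q_\sigma(f)\|_\SSS\le 3\|f\|_\SSS$ and $\|P_\sigma(f)\|_\SSS\le 2\|f\|_\SSS$, so
\[
\|f\|_{\BB,\SSS,\sigma}\le 3\|f\|_\SSS+2\bigl(\sup_j\|\xx_j\|_\XX\bigr)\|f\|_\SSS,
\]
which yields $C_2=5$ in the normalized case $\sup_j\|\xx_j\|_\XX=1$.

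For the lower bound I would simply use $f=P_\sigma(f)+Q_\sigma(f)$ and the triangle inequality in $\SSS$:
\[
\|f\|_\SSS\le\|P_\sigma(f)\|_\SSS+\|Q_\sigma(f)\|_\SSS=\frac{1}{\|\xx_k\|_\XX}\Bigl\|\sum_{n}\vv_n^*(f)\xx_n\Bigr\|_\XX+\|Q_\sigma(f)\|_\SSS.
\]
Setting $c:=\inf_j\|\xx_j\|_\XX>0$ (which exists because $\BB$ is semi-normalized), each summand on the right is bounded by $\max(1,1/c)$ times one of the two pieces of $\|f\|_{\BB,\SSS,\sigma}$, giving $C_1=\max(1,1/c)$; when $\BB$ is normalized, $c=1$ and $C_1=1$, so the triangle inequality is used directly and nothing is lost.

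There is no real obstacle: the whole content is the identification of $P_\sigma(f)$ and $\vv_k^*(f)$ on a block-supported $f$, plus one application each of \eqref{ComplementAveragingSubsym} and of the triangle inequality. The only mild piece of bookkeeping is to make sure the normalizations $\Lambda_m$ and $\Lambda_m^*=m/\Lambda_m$ built into $\vv_n$ and $\vv_n^*$ cancel exactly so that the $\XX$- and $\SSS$-contributions line up as described.
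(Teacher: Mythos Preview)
Your argument is correct and is essentially the same as the paper's: both identify that on a block-supported $f$ the $\XX$-summand equals $|\vv_k^*(f)|\,\|\xx_k\|_\XX=\|P_\sigma(f)\|_\SSS\,\|\xx_k\|_\XX$, and then use the bounds from \eqref{ComplementAveragingSubsym} (which the paper packages as \eqref{equivalentnorm}) together with the triangle inequality $f=P_\sigma(f)+Q_\sigma(f)$. The only cosmetic difference is that the paper invokes the already-stated \eqref{equivalentnorm} directly, whereas you recompute its two halves explicitly.
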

\begin{proof}  Assume, without loss of generality, that $\BB=(\xx_n)_{n=1}^\infty$ is normalized. Let $\vv_n$ and $\vv_n^*$ be  for $n\in\NN$  as is \eqref{BOSforAP}. Let $k\in\NN$ and $f\in c_{00}$ such that $\supp f \subseteq \sigma_k$.
Since  $\vv_n^*(f)=0$ for $n\not=k$ we have
\[
 \Vert f\Vert_{\BB,\SSS,\sigma}
 = \Vert Q_\sigma (f)\Vert_\SSS +  | \vv_k^*(f) |  \, \Vert \xx_k\Vert_\XX
 = \Vert Q_\sigma (f)\Vert_\SSS +  \left \Vert \sum_{n=1}^\infty \vv_n^*(f)  \, \vv_n\right\Vert_\SSS.
\]
 We conclude the proof by appealing to  \eqref{equivalentnorm}.\end{proof}

\begin{lemma}\label{CommutingLemma}
 Let $\SSS$ be a subsymmetric sequence space, and $\sigma=(\sigma_n)_{n=1}^\infty$ be an ordered partition of $\NN$. 
Let $A\subseteq\NN$ finite.
With $(\vv_n^*)$  as is \eqref{BOSforAP} and $B=\cup_{n\in A}\sigma_n$  we have
\[
\vv_n^*(S_{B}(f))=\begin{cases}\vv_n^*(f) & \text{if }n\in A \\ 0 & \text{otherwise,}\end{cases} 
\]
and 
\[
S_{B}(Q_\sigma(f))=Q_\sigma(S_{B}(f))\]
for all  $f\in c_{00}$.
\end{lemma}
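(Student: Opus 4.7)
The plan is to argue directly from the definitions, exploiting the crucial structural feature of $B$: since $B=\cup_{n\in A}\sigma_n$ is a union of entire blocks from the ordered partition $\sigma$, the coordinate projection $S_B$ preserves or annihilates each block wholesale, which is precisely what makes averaging commute with $S_B$.

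For the first identity, I would unfold the definition of $\vv_n^*$ from \eqref{BOSforAP} to write
\[
\vv_n^*(S_B(f))=\frac{1}{\Lambda^*_{|\sigma_n|}}\sum_{j\in\sigma_n}\ee_j^*(S_B(f))=\frac{1}{\Lambda^*_{|\sigma_n|}}\sum_{j\in\sigma_n\cap B}\ee_j^*(f),
\]
and then invoke the partition property. Disjointness of the $\sigma_m$'s yields $\sigma_n\cap B=\sigma_n$ when $n\in A$ and $\sigma_n\cap B=\emptyset$ when $n\notin A$, which immediately gives the claimed formula.

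For the second identity, I would rewrite $Q_\sigma=\Id-P_\sigma$, so that the claim reduces to showing $S_B\circ P_\sigma=P_\sigma\circ S_B$ on $c_{00}$. Using $P_\sigma(g)=\sum_n \vv_n^*(g)\,\vv_n$ and the first part, one obtains $P_\sigma(S_B(f))=\sum_{n\in A}\vv_n^*(f)\,\vv_n$. On the other hand, each $\vv_n$ is supported on $\sigma_n$, so $S_B(\vv_n)=\vv_n$ if $n\in A$ and $S_B(\vv_n)=0$ otherwise, giving $S_B(P_\sigma(f))=\sum_{n\in A}\vv_n^*(f)\,\vv_n$ as well.

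There isn't really a delicate step here; the whole statement is essentially a bookkeeping consequence of the observation that $B$ is a union of full $\sigma_n$-blocks. The only thing to be mindful of is that the equalities should first be verified on $c_{00}$, where everything is a finite sum and questions of convergence do not arise; this is adequate because the hypothesis restricts $f$ to $c_{00}$.
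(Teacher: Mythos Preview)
Your proof is correct and follows essentially the same approach as the paper's: the paper simply notes that the first identity is clear from the definitions and that it implies $S_B\circ P_\sigma=P_\sigma\circ S_B$, whence the second identity. Your version spells out both directions of this commutation (computing $P_\sigma(S_B f)$ via the first identity and $S_B(P_\sigma f)$ via $\supp\vv_n\subseteq\sigma_n$), which is a harmless elaboration of the same argument.
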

\begin{proof} The first equality is clear from the definition and implies that $S_{B}(P_\sigma(f))=P_\sigma(S_{B}(f))$.
Thus the second identity holds.
\end{proof}

From   Lemma~\ref{lem: normequivalence5} and Lemma~\ref{CommutingLemma} we deduce the existence of a constant $C$ such that 
\[
|a_k|\le  C \Vert f\Vert_{\BB,\SSS,\sigma}
\] for all $k\in\NN$ and all $f=(a_j)_{j=1}^\infty\in c_{00}$.
We infer that $(c_{00},\Vert \cdot\Vert_{\BB,\SSS,\sigma})$ is a normed space whose completion can be carried out inside $c_0$. So we can safely give the following definition.

\begin{definition}Let $\BB$ be a basis of a Banach space $\XX$.  Let $\SSS$ be a subsymmetric sequence space and $\sigma$ be an ordered partition of $\NN$. 
\begin{itemize}
\item We define the space $\YY[\BB,\SSS,\sigma]$ as the completion of $(c_{00},\Vert \cdot\Vert_{\BB,\SSS,\sigma})$ carried out inside $c_0$.

 \item $\YY^{(N)}[\BB,\SSS,\sigma]$ will be the $N$-dimensional space $[\ee_j \colon 1\le j \le N]$ regarded as a subspace of 
$\YY[\BB,\SSS,\sigma]$.
 \end{itemize}
\end{definition}

The next theorem summarizes some early properties of the DKK-method. For expositional ease we include a sketch of their proof.

\begin{theorem}[cf. \cite{DKK2003}]\label{FirstProperties}
Let $\BB=(\xx_j)_{j=1}^{\infty}$ be a  basis for a Banach space $\XX$, let $\SSS$ be a subsymmetric sequence space, and $\sigma=(\sigma_n)_{n=1}^\infty$ be an ordered partition of $\NN$. We have:
\begin{itemize}
\item[(a)] The unit-vector system $\mathcal E=(\ee_n)_{n=1}^\infty$ is a semi-normalized basis for $\YY[\BB,\SSS,\sigma]$.
\item[(b)] $\YY[\BB,\SSS,\sigma] \approx  Q_\sigma(\SSS)  \oplus \XX $. In fact,
the mapping $H$  in \eqref{eq:1} extends to an isomorphism from 
$\YY[\BB,\SSS,\sigma]$ onto $Q_\sigma(\SSS)  \oplus \XX$.

\item[(c)] If $M_r=\sum_{n=1}^r |\sigma_n|$ for $k\in \NN$ then
\[
\YY^{(M_r)}[\BB,\SSS,\sigma]\approx Q_\sigma(\SSS^{(M_r)})\oplus \XX^{(r)}[\BB].
\]

\item[(d)] If $\BB'$ is a basis for a Banach space $\XX^{\prime}$ then  (up to an equivalent norm) $\YY[\BB,\SSS,\sigma]) = \YY[\BB',\SSS,\sigma]$ if and only if the bases $\BB$ and 
$\BB'$ are equivalent.
\end{itemize}
\end{theorem}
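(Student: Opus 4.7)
The plan is built around the observation that $H$ in \eqref{eq:1} is by construction a linear isometry from $(c_{00},\|\cdot\|_{\BB,\SSS,\sigma})$ into $\SSS\oplus\XX$ taking values in the subspace $Q_\sigma(\SSS)\oplus\XX$. Once I identify the range of the isometric extension $\tilde H\colon \YY[\BB,\SSS,\sigma]\to\SSS\oplus\XX$ as \emph{all} of $Q_\sigma(\SSS)\oplus\XX$, part~(b) follows directly and the other three parts become easy corollaries, so that is the first target.

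For~(b), the identity $\|H(f)\|_{\SSS\oplus\XX}=\|f\|_{\BB,\SSS,\sigma}$ is the very definition of the gauge, hence $H$ extends to an isometric embedding $\tilde H$. To verify surjectivity onto $Q_\sigma(\SSS)\oplus\XX$, I would take $(g,h)$ with $g=Q_\sigma(g)\in Q_\sigma(\SSS)$ and $h=\sum_n c_n\xx_n\in\XX$ and, using density of $c_{00}$ in $\SSS$, pick $g_N\in c_{00}$ with $\supp(g_N)\subseteq\bigcup_{n\le r_N}\sigma_n$ and $g_N\to g$ in $\SSS$. Boundedness of $Q_\sigma$ on $\SSS$ (Theorem~\ref{AveragingSubsymmetric}) then gives $Q_\sigma(g_N)\to g$. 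Setting
\[
f_N:=Q_\sigma(g_N)+\sum_{n=1}^N c_n\vv_n\in c_{00},
\]
and using $Q_\sigma(\vv_n)=0$ together with $\vv_n^*\circ Q_\sigma=0$, a direct computation gives $H(f_N)=(Q_\sigma(g_N),\sum_{n=1}^N c_n\xx_n)\to(g,h)$, so $(f_N)$ is Cauchy in $\YY[\BB,\SSS,\sigma]$ and its limit $f$ satisfies $\tilde H(f)=(g,h)$. Part~(c) is the same argument restricted to elements supported in $\{1,\dots,M_r\}$: for such $f$, $Q_\sigma(f)$ lies in $\SSS^{(M_r)}$ and $\sum_n\vv_n^*(f)\xx_n$ lies in $\XX^{(r)}[\BB]$, and the approximation scheme stays inside these finite-dimensional pieces.

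For~(a), semi-normalization of $\EE$ is Lemma~\ref{lem: normequivalence5} applied to each $\ee_j$, whose support is a singleton and hence lies in some $\sigma_k$. To show $\EE$ is a Schauder basis I would uniformly bound the partial-sum operators $S_m[\EE]$ on $c_{00}$ in the DKK-gauge: fixing $r$ with $M_r\le m<M_{r+1}$, I split $S_m=S_{M_r}+S_{[M_r+1,m]}$. By Lemma~\ref{CommutingLemma} the first summand commutes with $Q_\sigma$ and truncates $\sum_n\vv_n^*(f)\xx_n$ to its first $r$ terms, so it is bounded using the unconditionality of $\EE$ in $\SSS$ and the basis constant of $\BB$ in $\XX$. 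The correction $S_{[M_r+1,m]}(f)$ is supported in $\sigma_{r+1}$, so by Lemma~\ref{lem: normequivalence5} it is controlled in the DKK-gauge by its $\SSS$-norm, which in turn is dominated by $\|S_{\sigma_{r+1}}(f)\|_\SSS$, and yet another application of Lemma~\ref{lem: normequivalence5} (combined with Lemma~\ref{CommutingLemma}) bounds this by $\|f\|_{\BB,\SSS,\sigma}$. For~(d), evaluating the gauge on $f=\sum_{n=1}^N c_n\vv_n\in c_{00}$ yields $\|f\|_{\BB,\SSS,\sigma}=\|\sum_{n=1}^N c_n\xx_n\|_\XX$ because $Q_\sigma(\vv_n)=0$, and likewise for $\BB'$; so equivalence of the norms $\|\cdot\|_{\BB,\SSS,\sigma}$ and $\|\cdot\|_{\BB',\SSS,\sigma}$ is exactly equivalence of the bases $\BB$ and $\BB'$.

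I expect the surjectivity step in~(b) to be the main obstacle: the delicate point is producing approximants $f_N\in c_{00}$ that simultaneously (i) converge to the target in the DKK-gauge and (ii) genuinely live in the domain of $H$. The trick of replacing a generic $\SSS$-approximation $g_N$ of $g$ by its block complement $Q_\sigma(g_N)$ is what makes this work, and it crucially uses the boundedness of $Q_\sigma$ on $\SSS$ granted by Theorem~\ref{AveragingSubsymmetric}; everything else in the theorem is then largely a matter of packaging.
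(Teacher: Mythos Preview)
Your proposal is correct and follows essentially the same route as the paper. For (b) the paper packages the surjectivity argument by writing down an explicit inverse $G(g,h)=g+L(h)$ with $L(\xx_n)=\vv_n$ and checking that $c_{00}\cap Q_\sigma(\SSS)$ is dense in $Q_\sigma(\SSS)$, which is exactly your approximation step $Q_\sigma(g_N)\to g$ in disguise; your $f_N$ is $G(Q_\sigma(g_N),\sum_{n\le N}c_n\xx_n)$. For (a) the paper only bounds $S_{M_r}$ explicitly and then says ``combining with Lemma~\ref{lem: normequivalence5} gives (a)'', whereas you spell out the split $S_m=S_{M_r}+S_{[M_r+1,m]}$ and handle the correction term supported in $\sigma_{r+1}$ via Lemma~\ref{lem: normequivalence5}; this is the intended filling-in. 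For (d) your direct evaluation of the gauge on $\sum_n c_n\vv_n$ is a slightly more elementary version of the paper's computation of $H\circ G'$, but the content is identical.
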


\begin{proof}Denote $Q=Q_\sigma$ and let $H$ be as in \eqref{eq:1}. 
By Lemma~\ref{CommutingLemma} we have
\[
H(S_{M_r}(f))=(S_{M_r}(Q(f)), S_r[\BB](f)), \quad r\in\NN, \, f\in c_{00}.
\]
Hence, the partial-sum projections $(S_{M_r})_{r=1}^\infty$ are uniformly bounded.
Combining  with Lemma~\ref{lem: normequivalence5} gives (a).

 The mapping $H$  is continuous by definition and so it 
 extends to a continuous map $\tilde H$ from $\YY[\BB,\SSS,\sigma]$ into $Q(\SSS)\oplus \XX$. Let us prove that $\tilde H$ is an onto isomorphism. Put $V=c_{00}\cap Q(\SSS)$, $X=\langle \xx_k\colon k\in\NN\rangle$, and let $L$ be the linear map from $X$ into $c_{00}$ defined by $L(\xx_n)= \vv_n$ for all $n\in\NN$. 
 It is straightforward to check that the map
 \[
 G\colon V\times X  \to c_{00}, \quad
 (g,h)\mapsto g+L(h)
 \]
is continuous and verifies $G(H(f))=f$ for all $f\in c_{00}$, and also $H(G(g,h))=(g,h)$ for all $g\in V$ and $h\in X$. Hence, in order  to obtain (b) it only remains to see that  $V$ is a dense subspace of $Q(\SSS)$. Let $f\in Q(\SSS)$ and $\epsilon>0$. Pick $g\in U$ such 
$ \Vert f-g\Vert_\SSS\le \varepsilon/\Vert Q\Vert$. Since 
\[ \Vert f -Q(g)\Vert_\SSS=\Vert Q(f) -Q(g)\Vert_\SSS\le \Vert Q\Vert \, \Vert f-g\Vert_{\SSS}\le\epsilon \]
and $Q(g)\in V$ we are done.

We obtain (c) by restricting the isomorphism $\tilde H$  to $\YY^{(M_r)}[\BB,\SSS,\sigma]$.

Assume that $\BB'=(\xx_n')_{n=1}^\infty$ is a basis for $\XX'$. Let $L'$ and $G'$ be the operators corresponding, respectively, to $L$ and $G$ when replacing $\BB$ with 
$\BB'$. We have that $\YY[\BB,\SSS,\sigma]=\YY[\BB',\SSS,\sigma]$ if and only if 
$H\circ G'$ extends to an isomorphism from $Q(\SSS)\oplus \XX$ onto $Q(\SSS)\oplus \XX'$. It is straightforward to check that
\[
\
H\left(G'\left(g,\sum_{n=1}^\infty a_n \, \xx_n\right)\right)=\left(g,\sum_{n=1}^\infty   a_n \, \xx'_n\right), \quad g\in V,\, (a_n)_{n=1}^\infty\in c_{00}.
\]
Hence, $\YY[\BB,\SSS,\sigma]=\YY[\BB',\SSS,\sigma]$ if and only if the
mapping
 \[
\sum_{n=1}^\infty a_n\, \xx_n\mapsto \sum_{n=1}^\infty a_n, \xx_n', \quad (a_n)_{n=1}^\infty\in c_{00}
\] 
extends to an isomorphism from $\XX$ onto $\XX'$.
\end{proof}

Part~(d) of Theorem~\ref{FirstProperties} alerts us that in the case when the basis $\BB$ is conditional so is the unit-vector system
of $\YY[\BB,\SSS,\sigma]$. Indeed, it is possible to obtain a relation between the conditionality constants of  both bases.
Prior to formulate this result we introduce some notation. 

\begin{lemma}\label{ConditionalityConstantsRelation}
Let $\BB$ be a  basis for a Banach space $\XX$,  let $\SSS$ be a subsymmetric sequence space, and $\sigma=(\sigma_n)_{n=1}^\infty$ be an ordered partition of $\NN$. If $M_r=\sum_{n=1}^r |\sigma_n|$ for all $r\in \NN$, then
\[
L_{M_r}[\EE,\YY[\BB,\SSS,\sigma]]\ge L_r[\BB, \XX].
\]
\end{lemma}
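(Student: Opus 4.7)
The plan is to construct, for each $f = \sum_{n=1}^{r} a_n \xx_n \in \XX^{(r)}[\BB]$ and each $A \subseteq \{1,\dots,r\}$, a corresponding pair $(g, A')$ in $\YY[\BB,\SSS,\sigma]$ so that $\supp(g) \subseteq \{1,\dots,M_r\}$ and the ratio $\|S_{A'}(g)\|_{\YY}/\|g\|_{\YY}$ equals $\|S_A[\BB,\XX](f)\|_{\XX}/\|f\|_{\XX}$. Taking the supremum then yields the inequality. The natural choice is to lift via the map $L(\xx_n) = \vv_n$ from the proof of Theorem~\ref{FirstProperties}, and to replace $A$ by $A' = \bigcup_{n \in A} \sigma_n$.

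First I would set $g = \sum_{n=1}^{r} a_n \vv_n$, which as a sequence has $g_j = a_n/\Lambda_{|\sigma_n|}$ for $j \in \sigma_n$, $n \le r$, and $g_j = 0$ otherwise. In particular $\supp(g) \subseteq \sigma_1 \cup \cdots \cup \sigma_r = \{1,\dots,M_r\}$, so $g$ is admissible for the supremum defining $L_{M_r}[\EE,\YY]$. The next step is to compute $\|g\|_{\YY[\BB,\SSS,\sigma]}$. Since $g$ is constant on each block $\sigma_n$, one has $P_\sigma(g) = g$ and hence $Q_\sigma(g) = 0$. A direct computation using $\vv_n^*(\vv_k) = \delta_{n,k}$ (which follows from $\Lambda_{|\sigma_n|}^* \Lambda_{|\sigma_n|} = |\sigma_n|$) gives $\vv_n^*(g) = a_n$ for $n \le r$ and $0$ otherwise, so
\[
\|g\|_{\YY} = \|Q_\sigma(g)\|_\SSS + \Bigl\|\sum_{n} \vv_n^*(g)\, \xx_n\Bigr\|_{\XX} = \Bigl\|\sum_{n=1}^{r} a_n \xx_n\Bigr\|_{\XX} = \|f\|_{\XX}.
\]

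Next I would take $A' = \bigcup_{n \in A}\sigma_n$ and observe that $S_{A'}(g) = \sum_{n \in A} a_n \vv_n$, which is precisely the lift of $S_A[\BB,\XX](f) = \sum_{n \in A} a_n \xx_n$. Repeating the norm computation with $S_A[\BB,\XX](f)$ in place of $f$ gives
\[
\|S_{A'}[\EE,\YY](g)\|_{\YY} = \|S_A[\BB,\XX](f)\|_{\XX}.
\]
Therefore
\[
L_{M_r}[\EE,\YY[\BB,\SSS,\sigma]] \ge \frac{\|S_{A'}(g)\|_{\YY}}{\|g\|_{\YY}} = \frac{\|S_A[\BB,\XX](f)\|_{\XX}}{\|f\|_{\XX}},
\]
and taking the supremum over admissible $(f,A)$ yields the result.

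There is no real obstacle: once the lifting $L$ is identified, everything reduces to the two identities $Q_\sigma(L(f)) = 0$ and $\vv_n^*(L(f)) = a_n$, both immediate from the definitions in \eqref{BOSforAP} and \eqref{DemocracySubSym}. The only mild point to be careful about is making sure the supremum defining $L_r[\BB,\XX]$ can be restricted to $A \subseteq \{1,\dots,r\}$, which is automatic since coordinates of $f$ outside $\{1,\dots,r\}$ vanish, and verifying that taking $g \in c_{00}$ (rather than an element of the completion) is legitimate, which is fine since $c_{00}$ is dense in $\YY[\BB,\SSS,\sigma]$ and the computation involves only finitely many coordinates.
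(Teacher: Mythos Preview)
Your proof is correct and follows essentially the same approach as the paper: lift $f=\sum_{n=1}^r a_n\xx_n$ to $g=\sum_{n=1}^r a_n\vv_n$, use that $Q_\sigma(g)=0$ and $\vv_n^*(g)=a_n$ to get $\|g\|_{\YY}=\|f\|_\XX$, and replace $A$ by $A'=\bigcup_{n\in A}\sigma_n$. The only differences are notational.
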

\begin{proof}Let $\Lambda_n$, $\vv_n$, and $\vv_n^*$ be defined for all $n\in\NN$ as in
\eqref{DemocracySubSym} and \eqref{BOSforAP}. Put $m_0=0$.
Given a non-null $r$-tuple $(a_n)_{n=1}^r$  we define a sequence $f=(b_j)_{j=1}^{\infty}$ by 
\[
b_j=\begin{cases}
a_n/\Lambda_{|\sigma_n|} & \text{if  $j\in\sigma_n$ for some $1\le n \le r$,}\\
0 & \text{if $j>M_r$.}
\end{cases}
\]
We have $\vv_n^*(f)=a_n$ for all $n\in\NN$, 
$P_\sigma (f)=f$ and, then, $Q_\sigma (f)=0$. Consequently, for any $A\subseteq\NN$, putting $B=\cup_{n\in A} \sigma_n$, we have
\[
L_{M_r}[\EE,\YY[\BB,\SSS,\sigma]]
\ge
\frac{\Vert S_B(g)\Vert_{\BB,\SSS,\sigma}}{\Vert g\Vert_{\BB,\SSS,\sigma}} 
=
\frac{\Vert S_A(\sum_{n=1}^r a_n\xx_n)\Vert_\XX}{\Vert \sum_{n=1}^r a_n\xx_n\Vert_\XX}.
\]
We finish the proof by taking the supremum on $(a_n)_{n=1}^r$.
\end{proof}

\begin{proposition}\label{ConditionalityConstantsRelationTwo}
Let $\BB$  be a basis for a Banach space $\XX$, let
$\SSS$ be a subsymmetric sequence space and  $\sigma=(\sigma_n)_{n=1}^\infty$ be an ordered partition of $\NN$. Assume that $L_m[\BB]\gtrsim \delta(m)$ for $m\in\NN$ for some non-decreasing doubling function  $\delta\colon[0,\infty)\to[0,\infty)$ and that
\begin{equation}\label{conditionB}
\log\left(\sum_{n=1}^r |\sigma_n|\right) \lesssim r  \text{  for } r\in\NN.
\end{equation}
Then
$
L_{m}[\EE,\YY[\BB,\SSS,\sigma]]\gtrsim \delta(\log m)
$
for $m\in\NN$.
\end{proposition}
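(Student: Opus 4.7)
The proof will be a short packaging of Lemma~\ref{ConditionalityConstantsRelation} together with the hypothesis \eqref{conditionB} and the doubling property of $\delta$. The main idea is to choose, for each $m$, an integer $r$ comparable to $\log m$ for which $M_r\le m$, so that the already-established inequality $L_{M_r}[\EE,\YY[\BB,\SSS,\sigma]]\ge L_r[\BB,\XX]\gtrsim\delta(r)$ combined with the obvious monotonicity of $L_m$ in $m$ gives the desired lower bound.

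In detail, first note that $L_m[\EE,\YY[\BB,\SSS,\sigma]]$ is a non-decreasing function of $m$, since enlarging the threshold $m$ only enlarges the family of admissible $f$ in the supremum defining $L_m$. Next, fix constants $C_0, C_1>0$ witnessing the hypotheses, namely $C_0\, L_r[\BB,\XX]\ge \delta(r)$ for all $r\in\NN$ and $\log M_r\le C_1 r$ for all $r\in\NN$, where $M_r=\sum_{n=1}^r|\sigma_n|$. Choose also an integer $k\ge 1$ such that $2C_1\le 2^k$, and let $K$ be the doubling constant of $\delta$, so that $\delta(2^k t)\le K^k\delta(t)$ for all $t\ge 0$.

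Given $m\ge M_1$, let $r\in\NN$ be maximal with $M_r\le m$. Then $m<M_{r+1}$, so $\log m<\log M_{r+1}\le C_1(r+1)\le 2C_1 r$. Using that $\delta$ is non-decreasing and $K$-doubling,
\[
\delta(\log m)\le \delta(2C_1 r)\le \delta(2^k r)\le K^k\, \delta(r).
\]
Combining with Lemma~\ref{ConditionalityConstantsRelation} and the monotonicity of $L_m[\EE,\YY[\BB,\SSS,\sigma]]$,
\[
L_m[\EE,\YY[\BB,\SSS,\sigma]]\ge L_{M_r}[\EE,\YY[\BB,\SSS,\sigma]]\ge L_r[\BB,\XX]\ge \tfrac{1}{C_0}\delta(r)\ge \tfrac{1}{C_0 K^k}\delta(\log m),
\]
which is the desired estimate for $m\ge M_1$. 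The range $1\le m<M_1$ is handled trivially, since in that range $\delta(\log m)\le \delta(\log M_1)$ is bounded by a constant while $L_m[\EE,\YY[\BB,\SSS,\sigma]]\ge 1$, and so the implicit constant can be adjusted to cover these finitely many exceptional values.

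No serious obstacle is expected: all the real work has been done in Lemma~\ref{ConditionalityConstantsRelation}, and the remaining task is a routine bookkeeping argument whose only subtlety is the use of the doubling property to absorb the factor $2C_1$ appearing in the estimate $\log m\lesssim r$. This is precisely why $\delta$ was assumed to be doubling in the hypotheses, rather than merely non-decreasing.
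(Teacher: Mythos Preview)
Your proof is correct and follows essentially the same approach as the paper's own argument: pick $r$ maximal with $M_r\le m$, use Lemma~\ref{ConditionalityConstantsRelation} and the monotonicity of $L_m$ to reduce to $L_r[\BB,\XX]\ge C^{-1}\delta(r)$, and then use the doubling property of $\delta$ to pass from $\delta(r)$ to $\delta(\log m)$. The only cosmetic difference is that the paper asserts directly the existence of $C_2$ with $\delta(C_1 x)\le C_2\delta(x)$, whereas you make this explicit by choosing $k$ with $2C_1\le 2^k$ and iterating the doubling inequality; both amount to the same thing.
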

\begin{proof}
If $M_r=\sum_{n=1}^r |\sigma_n|$, there is  $C_1>0$ such that $\log M_r \le C_1 (r-1)$ for all $r\ge 2$.
Let  $C_2\in(0,\infty)$ be such that $\delta(C_1 x)\le C_2 \delta(x)$ for all $x\ge 0$. Let $C_3\in(0,\infty)$ be such that $\delta(m)\le C_3 L_m[\BB]$ for all $m\in\NN$.
Given $m\in\NN$ with $m\ge M_1$, pick $r\ge 1$ such that $M_r\le m <M_{r+1}$. 
Invoking Lemma~\ref{ConditionalityConstantsRelation} we get
\begin{align*}
C_2 C_3 L_{m}[\EE,\YY[\BB,\SSS,\sigma]]&
\ge C_2 C_3  L_{M_r}[\EE,\YY[\BB,\SSS,\sigma]]\\
&\ge C_2 C_3 L_r[\BB,\XX]\\
&\ge C_2 \delta(r)\\
&\ge  \delta(C_1 r)\\
&\ge \delta(\log M_{r+1})\\
&\ge \delta(\log m).
\end{align*}
\end{proof}

\subsection{Democracy-like properties  from the DKK-method}\label{DKKDemocratic}
\smallskip

\noindent The super-democracy of the unit-vector system in sequence spaces  obtained by the DKK-method will be inferred by means of embeddings
involving Lorentz sequence spaces. 

Given a weight $\ww=(w_n)_{n=1}^\infty$, the weak Lorentz sequence space
 $d_1^\infty(\ww)$ consists of all sequences $f=(a_n)_{n=1}^\infty\in c_0$ whose non-increasing rearrangement $(a_n^*)_{n=1}^\infty$ verifies
\[
\textstyle
\Vert f\Vert_{d_1^\infty(\ww)}=\sup_m a_m^* \sum_{n=1}^m w_n
=\sup_{a>0} a \sum_{n=1}^{|\{j\colon |a_j|>a\}|} w_n
  <\infty.
\]
It is  well-known that an embedding of the form \[
d_1(\ww)\subseteq \YY \subseteq d_1^\infty(\ww)\] implies that the unit-vector basis of the sequence space $\YY$ is super-democratic with fundamental function  equivalent to $(\sum_{n=1}^m w_n)_{m=1}^\infty$.
Conversely, any almost greedy basis fulfils  such embeddings  (see \cite{AA2015}*{Theorem 3.1}). 

In the case when  $(\SSS,\Vert \cdot\Vert_\SSS)$ is a subsymmetric sequence space, the sequence $(\Lambda_n)_{n=1}^\infty$ defined as in \eqref{BOSforAP}
 is the fundamental function of the unit-vector system and if $\ww
=(\Lambda_n-\Lambda_{n-1})_{n=1}^\infty$ then
\begin{equation}\label{EmbeddingSubSym}
\Vert f\Vert_{d_1^\infty(\ww)} \le \Vert f\Vert_\SSS\le \Vert f\Vert_{d_1(\ww)}
\end{equation}
(cf.\ \cite{AADK2016}*{Theorem 6.1}).
Note  
 that while $(\Lambda_n)_{n=1}^\infty$ is non-decreasing,  the sequence $(\Lambda_n/n)_{n=1}^\infty$ is non-increasing (see \cite{DKKT2003}*{comments below Theorem 3.1}). Of course, this monotonicity can be expressed as
\begin{equation}\label{Monotony}
\frac{\Lambda_m}{m}\le \frac{\Lambda_n}{n}, \quad m\ge n,
\end{equation}
but also in the form
\begin{equation}\label{WeightsIneq}
\Lambda_n-\Lambda_{n-1}\le \frac{\Lambda_n}{n}, \quad n\in\NN.
\end{equation}
\begin{theorem}[cf. \cite{DKK2003}*{Proposition 6.1}]\label{EmbeddingTheorem} Let $\BB$ be a  basis for a Banach space $\XX$, let $(\SSS,\Vert \cdot\Vert_\SSS)$  be a subsymmetric sequence space, and
 $\sigma=(\sigma_n)_{n=1}^\infty$ be an ordered partition of $\NN$. Assume that 
\begin{equation}\label{conditionA}
M_r:=\sum_{n=1}^{r} |\sigma_n| \lesssim |\sigma_{r+1}| \text{   for  } r\in\NN.
\end{equation}
 Let  $\ww=(\Lambda_n - \Lambda_{n-1})_{n=1}^\infty$ be as in \eqref{BOSforAP}
and $\ww'=(\Lambda_n / n)_{n=1}^\infty$.   Then 
\[
d_1(\ww')\subseteq \YY[\BB,\SSS,\sigma]\subseteq d_{1}^\infty(\ww)
\]
(with continuous embeddings).
\end{theorem}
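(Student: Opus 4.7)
The plan is to verify the two embeddings separately; throughout I may assume $\BB=(\xx_j)_{j=1}^\infty$ is normalized and work with $f=(a_j)_{j=1}^\infty\in c_{00}$. Recall that the DKK norm splits as $\Vert f\Vert_{\BB,\SSS,\sigma}=\Vert Q_\sigma f\Vert_\SSS+\Vert\sum_n\vv_n^*(f)\xx_n\Vert_\XX$ with $\vv_n^*(f)=\Lambda_{|\sigma_n|}\Av(f,\sigma_n)$, and that condition \eqref{conditionA} immediately upgrades to $M_r\lesssim|\sigma_r|$ (since $|\sigma_r|\ge M_{r-1}$ forces $M_r=M_{r-1}+|\sigma_r|\lesssim|\sigma_r|$), so the blocks grow geometrically. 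I will also use that $\Lambda_m$ is doubling, $\Lambda_{2m}\le 2\Lambda_m$, by \eqref{Monotony}.

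For the left embedding I bound each summand of the DKK norm by $\Vert f\Vert_{d_1(\ww')}$. The first summand is handled by the chain $\Vert Q_\sigma f\Vert_\SSS\le 3\Vert f\Vert_\SSS\le 3\Vert f\Vert_{d_1(\ww)}\le 3\Vert f\Vert_{d_1(\ww')}$, using \eqref{ComplementAveragingSubsym}, the right half of \eqref{EmbeddingSubSym}, and the pointwise bound $w_n\le w'_n$ from \eqref{WeightsIneq}. For the second summand, since $\BB$ is an arbitrary (possibly conditional) basis, the only universally available estimate is $\Vert\sum_n c_n\xx_n\Vert_\XX\lesssim\sum_n|c_n|$ with $c_n=\vv_n^*(f)$. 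From $|c_n|\le(\Lambda_{|\sigma_n|}/|\sigma_n|)\sum_{j\in\sigma_n}|a_j|$, combined with the crucial consequence of \eqref{conditionA} that for every $j\in\sigma_r$ one has $|\sigma_r|\gtrsim j$ and hence (by \eqref{Monotony}) $\Lambda_{|\sigma_r|}/|\sigma_r|\lesssim\Lambda_j/j=w'_j$, I obtain
\[
\sum_n|c_n|\;\lesssim\;\sum_j w'_j\,|a_j|\;\le\;\sum_j w'_j\,a_j^*\;=\;\Vert f\Vert_{d_1(\ww')},
\]
where the middle step is a Hardy--Littlewood rearrangement, valid because $(w'_j)$ is non-increasing by \eqref{Monotony}.

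For the right embedding I show $a_m^*\Lambda_m\lesssim\Vert f\Vert_{\BB,\SSS,\sigma}$ for every $m$. Fix $m$ indices $j_1,\dots,j_m$ with $|a_{j_k}|\ge a_m^*$, and split them according to whether the decomposition $a_{j_k}=(Q_\sigma f)_{j_k}+\Av(f,\sigma_{n(j_k)})$ has its $Q_\sigma$-part or its $P_\sigma$-part of magnitude at least $a_m^*/2$; one of the two sets contains at least $m/2$ of the $j_k$. In the $Q_\sigma$-case, $Q_\sigma f$ has $\ge m/2$ entries of absolute value $\ge a_m^*/2$, and the left half of \eqref{EmbeddingSubSym} together with doubling of $\Lambda$ yields $a_m^*\Lambda_m\lesssim\Vert Q_\sigma f\Vert_{d_1^\infty(\ww)}\le\Vert Q_\sigma f\Vert_\SSS\le\Vert f\Vert_{\BB,\SSS,\sigma}$. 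In the $P_\sigma$-case the set $T=\{n:|\Av(f,\sigma_n)|\ge a_m^*/2\}$ satisfies $\sum_{n\in T}|\sigma_n|\ge m/2$; taking $n^*=\max T$, condition \eqref{conditionA} gives $|\sigma_{n^*}|\gtrsim M_{n^*}\ge m/2$, whence $\Lambda_{|\sigma_{n^*}|}\gtrsim\Lambda_m$ by doubling, and
\[
a_m^*\Lambda_m\;\lesssim\;a_m^*\Lambda_{|\sigma_{n^*}|}\;\le\;2|\vv_{n^*}^*(f)|\;\le\;2K\,\Vert\textstyle\sum_k\vv_k^*(f)\xx_k\Vert_\XX\;\le\;2K\Vert f\Vert_{\BB,\SSS,\sigma},
\]
with $K=\sup_k\Vert\xx_k^*\Vert_{\XX^*}$ coming from the basis $\BB$.

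The hardest step is the $P_\sigma$-case of the right embedding. Because $\BB$ can be arbitrarily conditional, the only way to extract information about $P_\sigma f$ from $\Vert\sum_k\vv_k^*(f)\xx_k\Vert_\XX$ is through a single coordinate functional, which forces us to locate one block $\sigma_{n^*}$ that by itself already accounts for a constant fraction of the large coordinates of $P_\sigma f$. Condition \eqref{conditionA} is exactly what makes this possible, ensuring that the largest-indexed block $\sigma_{n^*}$ in $T$ dominates all its predecessors combined and in particular contains $\gtrsim m$ indices; without \eqref{conditionA} the mass of $P_\sigma f$ could be evenly spread across many blocks of comparable size, and no single $\vv_n^*(f)$ would be large enough to absorb $\Lambda_m$.
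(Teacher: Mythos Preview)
Your argument is correct and follows essentially the same route as the paper: the upper embedding is identical (bound the $\XX$-summand by $\sum_n|\vv_n^*(f)|$ and use $\Lambda_{|\sigma_n|}/|\sigma_n|\lesssim w'_j$ for $j\in\sigma_n$, bound the $Q_\sigma$-summand via \eqref{ComplementAveragingSubsym}--\eqref{WeightsIneq}), and for the lower embedding both you and the paper split the large coordinates according to whether the $P_\sigma$- or $Q_\sigma$-part is large, then exploit \eqref{conditionA} to reduce the $P_\sigma$-case to a \emph{single} coefficient $\vv_{n^*}^*(f)$ with $n^*$ maximal. The only cosmetic difference is that the paper fixes a threshold $t=C_\sigma/(1+C_\sigma)$ and combines the two pieces additively via the subadditivity $\Lambda_{|B_0|}+\Lambda_{|B_1|}\ge\Lambda_{|B|}$, whereas you use threshold $1/2$, pigeonhole to the larger half, and invoke doubling of $\Lambda$; both yield the same estimate up to constants.
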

\begin{proof} Let $C_\sigma=\sup_r M_r/ |\sigma_{r}|$. Assume that $\BB=(\xx_n)_{n=1}^\infty$ is a  bi-monotone and normalized
 basis for $(\XX,\Vert \cdot\Vert_\XX)$. 

Note that if $j\in\sigma_n$ then, by \eqref{Monotony},
\[
\frac{\Lambda_j}{j}\ge  \frac{\Lambda_{M_n}}{M_n} \ge
 \frac {\Lambda_{|\sigma_n|}}{ C_\sigma |\sigma_n|}.
\]
If  $f=(a_j)_{j=1}^\infty\in c_{00}$,
 \begin{align*}
\left\Vert \sum_{n=1}^\infty \vv_n^*(f) \xx_n \right\Vert_\XX &
\le \sum_{n=1}^\infty | \vv_n^*(f) |   \\
&\le \sum_{n=1}^\infty \frac{\Lambda_{|\sigma_n|}}{|\sigma_n|} \sum_{j \in \sigma_n} |a_j|\\
&\le C_\sigma\sum_{j=1}^\infty  \frac{\Lambda_j}{j} |a_j|.
\end{align*}
Hence, appealing to the rearrangement inequality and to \eqref{Monotony}, 
\[
\left\Vert \sum_{n=1}^\infty \vv_n^*(f) \xx_n\right\Vert_\XX\le  C_\sigma \Vert f\Vert_{d_1(\ww')}.\]
 By \eqref{ComplementAveragingSubsym}, \eqref{EmbeddingSubSym}
and \eqref{WeightsIneq}, we also have
 \[
\Vert Q_\sigma(f)\Vert_\SSS\le 3 \Vert f\Vert_\SSS\le 3 \Vert f\Vert_{d_1(\ww)}
 \le 3 \Vert f\Vert_{d_1(\ww')}.
\]
Combining, we get 
\[
\Vert f\Vert_{\BB,\XX,\sigma}\le (3+C_\sigma) \Vert f\Vert_{d_1(\ww')}.\]

Let us now look at the lower estimate. Suppose $a > 0$ and let $B := \{ k\in\NN \colon |a_k| > a \}$. Put $t=C_\sigma/(1+C_\sigma)$ and define
\[
A =
\{ n\in\NN \colon |\vv_n^*(f)| < ta  \Lambda_{|\sigma_n|}  \}
= \left\{ n\in\NN \colon
|\Av(f,\sigma_n)|
< ta
 \right\}.
\]
 If $k\in B\cap \sigma_n$ and $n\in A$, we have 
\[
|\ee_k^*(Q_\sigma(f))|=
\left| a_k- 
|\Av(f,\sigma_n)|
\right|>(1-t)a=
\frac{1}{1+C_\sigma} a.
\]
Consequently, if $B_0:=B \cap (\cup_{n \in A} \sigma_n)$, we have
\[
\Vert Q_\sigma(f)\Vert_\SSS\ge \frac{ a \Lambda_{|B_0|}}{1+C_\sigma}.
\]
If $m$ is the largest integer in $\NN\setminus A$
we have  $B_1:=B \setminus (\cup_{n \in A} \sigma_n)\subseteq\cup_{n=1}^m \sigma_n$. Therefore $|B_1|\le C_\sigma|\sigma_m|$ and then there is an integer $N$
with $\max\{ |B_1|,|\sigma_m|\}\le N\le  C_\sigma|\sigma_m|$.
 By \eqref{Monotony},
\[
\Lambda_{|B_1|}
\le
 \Lambda_N 
\le 
 \frac{N}{|\sigma_m|}    \Lambda_{|\sigma_m|}
\le \frac{C_\sigma}{ta} | \vv_m^*(f)|
= \frac{1+C_\sigma}{a} | \vv_m^*(f)|.
\]
Combining,
\[
\Vert f\Vert_{\BB,\SSS,\sigma}
\ge \Vert Q_\sigma(f)\Vert_\SSS + | \vv_m^*(f)|
\ge \frac{a}{1+C_\sigma} (\Lambda_{|B_0|}+\Lambda_{|B_1|})
\ge \frac{a}{1+C_\sigma} \Lambda_{|B|}.
\]
Hence
\[ \Vert f\Vert_{d_1^\infty(\ww)}\le (1+C_\sigma) \|f \|_{\BB,\SSS,\sigma}.\]
\end{proof}

Following \cite{DKKT2003}, we say that a weight $(\lambda_m)_{m=1}^\infty$  has the \textit{lower regularity property} (LRP for short) if there a positive integer $b$ such 
 \[
 2\lambda_m \le \lambda_{bm}, 
\quad
 m\in\NN.
 \] 
 We will also need the so-called \textit{upper regularity property} (URP for short). We say that $(\lambda_m)_{m=1}^\infty$ has the URP if there is an integer $s\ge 3$ such that
\[
 \lambda_{b m}\le \frac{b}{2} \lambda_m,\quad m\in\NN.
\] 
 \begin{corollary}\label{EmbeddingTwo}Assume that  the hypotheses of Theorem~\ref{EmbeddingTheorem} hold  and that  $(\Lambda_n)_{n=1}^\infty$ has the LRP.  Then 
 \[d_1(\ww)\subseteq \YY[\BB,\SSS,\sigma]\subseteq d_{1}^{\infty}(\ww).
\]
\end{corollary}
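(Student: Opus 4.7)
The upper embedding $\YY[\BB,\SSS,\sigma]\subseteq d_1^\infty(\ww)$ is already part of the conclusion of Theorem~\ref{EmbeddingTheorem}, so the only new content is the lower embedding $d_1(\ww)\subseteq \YY[\BB,\SSS,\sigma]$. Since Theorem~\ref{EmbeddingTheorem} provides $d_1(\ww')\subseteq \YY[\BB,\SSS,\sigma]$ with $\ww'=(\Lambda_n/n)_{n=1}^\infty$, my plan is to reduce the statement to the purely numerical inequality
\[
\Vert f\Vert_{d_1(\ww')}\lesssim \Vert f\Vert_{d_1(\ww)}, \quad f\in c_{00},
\]
and to derive this inequality from the LRP of $(\Lambda_n)_{n=1}^\infty$.

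\textbf{Step 1: reduction to a scalar estimate.} Because both norms depend only on the non-increasing rearrangement $(a_n^*)_{n=1}^\infty$, the desired inequality is
\[
\sum_{n=1}^\infty a_n^*\,\frac{\Lambda_n}{n}\le C\sum_{n=1}^\infty a_n^*(\Lambda_n-\Lambda_{n-1}).
\]
Testing on $f=\sum_{j=1}^N \ee_j$ shows that the key sharp form is the weight inequality
\begin{equation}\label{planKey}
\sum_{n=1}^N \frac{\Lambda_n}{n}\le C\,\Lambda_N,\qquad N\in\NN.
\end{equation}

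\textbf{Step 2: proving \eqref{planKey} from LRP.} Let $b\in\NN$ be such that $2\Lambda_m\le\Lambda_{bm}$ for all $m$. Fix $N$ and pick $K$ with $b^{K-1}<N\le b^K$. Using the monotonicity of $(\Lambda_n)_{n=1}^\infty$ and of $(\Lambda_n/n)_{n=1}^\infty$, I would split the sum in blocks $(b^k,b^{k+1}]$ and use the crude bound $\Lambda_n/n\le \Lambda_{b^{k+1}}/b^k$ on each block, getting
\[
\sum_{n=1}^{b^K}\frac{\Lambda_n}{n}\le \Lambda_1+(b-1)\sum_{k=1}^K \Lambda_{b^k}.
\]
The LRP yields $\Lambda_{b^k}\le 2^{k-K}\Lambda_{b^K}$, so the right-hand side is at most $C_b\,\Lambda_{b^K}$ after summing the geometric series. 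Finally, the inequality \eqref{WeightsIneq} telescopes to $\Lambda_{bn}\le b\,\Lambda_n$, allowing me to pass from $\Lambda_{b^K}$ back to $\Lambda_N$ at the price of a further factor $b$.

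\textbf{Step 3: Abel summation.} Once \eqref{planKey} is in hand, I would apply summation by parts twice (using that $a_n^*\to 0$): first write $\sum_n a_n^*\Lambda_n/n=\sum_n(a_n^*-a_{n+1}^*)\sum_{k=1}^n \Lambda_k/k$, invoke \eqref{planKey} to replace the inner sum by $C\Lambda_n$, and then reverse the Abel transform to recover $C\sum_n a_n^*(\Lambda_n-\Lambda_{n-1})$. Combined with the continuous inclusion $d_1(\ww')\subseteq\YY[\BB,\SSS,\sigma]$ given by Theorem~\ref{EmbeddingTheorem}, this yields $\Vert f\Vert_{\BB,\SSS,\sigma}\lesssim \Vert f\Vert_{d_1(\ww)}$ for every $f\in c_{00}$, and hence the claimed embedding by density.

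\textbf{Main obstacle.} The only delicate point is \eqref{planKey}: the pointwise comparison goes the wrong way, since \eqref{WeightsIneq} gives $\Lambda_n-\Lambda_{n-1}\le \Lambda_n/n$, so the improvement must come from an averaging argument in which LRP provides geometric growth of $(\Lambda_{b^k})_k$. Once that estimate is set up dyadically along the base~$b$, the rest of the proof is a routine rearrangement/Abel-summation exercise.
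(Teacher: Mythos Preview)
Your proof is correct and follows essentially the same route as the paper. Both arguments take the upper embedding directly from Theorem~\ref{EmbeddingTheorem} and obtain the lower one by showing $d_1(\ww)\subseteq d_1(\ww')$ via the Dini condition $\sum_{n=1}^m \Lambda_n/n\le C\Lambda_m$; the only difference is that the paper cites \cite{AA2015}*{Lemma 2.12} for the Dini estimate and leaves the passage from Dini to the Lorentz embedding as a ``consequently,'' whereas you supply a self-contained dyadic proof of the Dini condition (your Step~2) and spell out the Abel summation (your Step~3).
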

 
\begin{proof} From the  LRP and \eqref{Monotony}  (see, e.g., \cite{AA2015}*{Lemma 2.12}) we infer that $(\Lambda_n)_{n=1}^\infty$ satisfies  the Dini condition 
 \begin{equation*}
  \sum_{n=1}^m \frac{\Lambda_n}{n}\le C_d \Lambda_m \text{ for all }m\in\NN
 \end{equation*} 
 for some constant $C_d$. Consequently, $d_1(\ww)\subseteq d_1(\ww')$.
Combining with Theorem~\ref{EmbeddingTheorem} yields
\[d_1(\ww)\subseteq \YY[\BB,\SSS,\sigma] \subseteq d_1^\infty(\ww).\]
Quantitatively, if $C_\sigma=\sup_r M_r/ |\sigma_{r}|$,
\[
\frac{1}{1+C_\sigma}\Vert f\Vert_{d_1^\infty(\ww)}\le \Vert f \Vert_{\BB,\SSS,\sigma} \le  (3+C_\sigma) C_d \Vert f\Vert_{d_1(\ww)}.
\]
for all $f\in c_{00}$.
\end{proof}

\subsection{Quasi-greediness in bases  from the DKK-method}\label{DKKQG}

\smallskip
\noindent The embeddings provided by
Lemma~\ref{EmbeddingTwo} are considered for some authors as a property which  ensures in some sense the optimality of the estimation and compression
algorithms with respect to the basis (see \cite{Donoho}). The authors of \cite{AA2015} point out that almost greediness
is a stronger condition.
In this section we show that an extra property on the symmetric sequence space, namely URP, fills the gap between those two properties. 

\begin{lemma}\label{lem: projectionlemma} 
Let $(\SSS,\Vert \cdot\Vert_\SSS)$ be a subsymmetric sequence space and $\sigma=(\sigma_n)_{n=1}^\infty$ be an ordered partition of 
$\NN$. Let $\Lambda_n^*$ and $\vv_n^*$ for $n\in \NN$  be as in \eqref{DemocracySubSym} and \eqref{BOSforAP}. Then
\[
|\vv_n^*(S_A(f))|\le 2 \frac{\Lambda^*_{|A|}}{\Lambda^*_{|\sigma_n|}} \Vert S_{\sigma_n}(f) \Vert_\SSS
\]
whenever $n\in\NN$, $A\subseteq \sigma_n$ and $f\in c_{00}$.
\end{lemma}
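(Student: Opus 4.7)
The plan is to view $\vv_n^*(S_A(f))$ as the evaluation of a coordinate-sum functional on $S_{\sigma_n}(f)$, and then estimate that functional's norm in $\SSS^*$ via duality and the bi-democracy inequality \eqref{bidemocracy}.

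First I would unpack the definitions. Since $A \subseteq \sigma_n$, the sequence $S_A(f)$ is supported in $\sigma_n$, so by the definition of $\vv_n^*$ in \eqref{BOSforAP},
\[
\vv_n^*(S_A(f))
= \frac{1}{\Lambda^*_{|\sigma_n|}} \sum_{j\in A} \ee_j^*(f)
= \frac{1}{\Lambda^*_{|\sigma_n|}} \left(\sum_{j\in A} \ee_j^*\right)\!\bigl(S_{\sigma_n}(f)\bigr),
\]
where the last identity uses that $\ee_j^*(S_{\sigma_n}(f)) = \ee_j^*(f)$ for $j\in A \subseteq \sigma_n$.

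Next I would apply the duality pairing between $\SSS$ and $\SSS^*$ to bound
\[
\left|\left(\sum_{j\in A} \ee_j^*\right)\!\bigl(S_{\sigma_n}(f)\bigr)\right|
\le \left\|\sum_{j\in A} \ee_j^*\right\|_{\SSS^*} \, \|S_{\sigma_n}(f)\|_\SSS.
\]
The key step is then to bound the dual functional. Because the unit-vector basis of $\SSS$ is $1$-subsymmetric, so is its dual basis in $\SSS^*$; hence
\[
\left\|\sum_{j\in A} \ee_j^*\right\|_{\SSS^*} = \left\|\sum_{j=1}^{|A|} \ee_j^*\right\|_{\SSS^*},
\]
and the bi-democracy estimate \eqref{bidemocracy} yields
\[
\left\|\sum_{j=1}^{|A|} \ee_j^*\right\|_{\SSS^*}
\le \frac{2|A|}{\Lambda_{|A|}} = 2\,\Lambda^*_{|A|}.
\]

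Combining these three displays gives
\[
|\vv_n^*(S_A(f))| \le \frac{1}{\Lambda^*_{|\sigma_n|}} \cdot 2\,\Lambda^*_{|A|} \cdot \|S_{\sigma_n}(f)\|_\SSS
= 2\, \frac{\Lambda^*_{|A|}}{\Lambda^*_{|\sigma_n|}}\, \|S_{\sigma_n}(f)\|_\SSS,
\]
as required. There is no real obstacle here — the lemma reduces to a routine duality argument once one recognizes that $\vv_n^*$ is, up to the normalizing factor $1/\Lambda^*_{|\sigma_n|}$, precisely the coordinate-sum functional against which \eqref{bidemocracy} is designed to be applied.
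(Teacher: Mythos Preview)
Your proof is correct and follows essentially the same approach as the paper's: both unpack $\vv_n^*(S_A(f))$ as $(1/\Lambda^*_{|\sigma_n|})\,v^*(S_{\sigma_n}(f))$ with $v^*=\sum_{j\in A}\ee_j^*$, apply the duality pairing, and bound $\|v^*\|_{\SSS^*}\le 2\,\Lambda^*_{|A|}$ via \eqref{bidemocracy}. If anything, you are more explicit than the paper in justifying $\|\sum_{j\in A}\ee_j^*\|_{\SSS^*}=\|\sum_{j=1}^{|A|}\ee_j^*\|_{\SSS^*}$ through the $1$-subsymmetry of the dual system.
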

\begin{proof}Assume without lost of generality that $\supp(f)\subseteq\sigma_n$.
Let $v^*=\sum_{j\in A} \ee_j^*$.  Taking into account \eqref{Monotony} and \eqref{bidemocracy} we obtain
\[
|\vv_n^*(S_A(f))| =\frac{\Lambda_{|\sigma_n|}}{|\sigma_n|} | v^*( f ) |
\le  \frac{\Lambda_{|\sigma_n|}}{|\sigma_n|}   \Vert v^*\Vert_{\SSS^*} \Vert  f  \Vert_\SSS
 \le 2 \frac{\Lambda_{|\sigma_n|}}{|\sigma_n|}  \frac{|A|}{\Lambda_{|A|}}   \Vert   f  \Vert_\SSS,
\]
as desired.
 \end{proof}

For further reference,  let us write down the following easy result.
\begin{lemma}\label{lem: projectionlemmal1} 
 Let $\sigma=(\sigma_n)_{n=1}^\infty$ be an ordered partition of $\NN$. Let $\Lambda_n$ and $\vv_n^*$ for $n\in \NN$  be as in \eqref{BOSforAP} with $\SSS=\ell_1$. Then
\[
|\vv_n^*(S_A(f))|
\le 
\frac{\Lambda_{|A|}}{\Lambda_{|\sigma_n|}}  |\vv_n^*(f)|
+
\sum_{j\in \sigma_n} |a_j -\Av(f,\sigma_n)|
\]
whenever $n\in\NN$, $A\subseteq \sigma_n$, and $f=(a_j)_{j=1}^\infty\in c_{00}$.
\end{lemma}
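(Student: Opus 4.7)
With $\SSS=\ell_1$ the defining quantities in \eqref{DemocracySubSym} and \eqref{BOSforAP} simplify completely: $\Lambda_m=m$, $\Lambda_m^*=1$, and consequently $\vv_n^*=\sum_{j\in\sigma_n}\ee_j^*$, so $\vv_n^*(g)=\sum_{j\in\sigma_n}\ee_j^*(g)$ for every $g\in c_{00}$. Writing $f=(a_j)_{j=1}^\infty$ and $\alpha_n=\Av(f,\sigma_n)$, we have $\vv_n^*(f)=|\sigma_n|\,\alpha_n$, and the inequality to be proved becomes
\[
\left|\sum_{j\in A}a_j\right|\le \frac{|A|}{|\sigma_n|}\left|\sum_{j\in\sigma_n}a_j\right|+\sum_{j\in\sigma_n}|a_j-\alpha_n|.
\]

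The plan is to use the elementary decomposition $a_j=(a_j-\alpha_n)+\alpha_n$ for each $j\in A\subseteq\sigma_n$. Summing over $j\in A$ gives
\[
\sum_{j\in A}a_j=\sum_{j\in A}(a_j-\alpha_n)+|A|\alpha_n,
\]
so the triangle inequality produces
\[
\left|\sum_{j\in A}a_j\right|\le \sum_{j\in A}|a_j-\alpha_n|+|A||\alpha_n|.
\]
Since $A\subseteq\sigma_n$, the first summand on the right is at most $\sum_{j\in\sigma_n}|a_j-\alpha_n|$, which matches the second term in the target bound. For the remaining summand, the definition of $\alpha_n$ yields $|A||\alpha_n|=(|A|/|\sigma_n|)\bigl|\sum_{j\in\sigma_n}a_j\bigr|=(\Lambda_{|A|}/\Lambda_{|\sigma_n|})|\vv_n^*(f)|$, which is precisely the first term. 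Combining these two estimates completes the argument.

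There is no genuine obstacle here: the statement is essentially a quantitative version of the identity ``restricted sum $=$ averaged contribution $+$ oscillation", and the only thing to check is that with the $\ell_1$ normalisation the weights $\Lambda_{|A|}/\Lambda_{|\sigma_n|}$ collapse to $|A|/|\sigma_n|$, which is exactly the proportion that appears when one replaces the average $\alpha_n$ by the full sum $\vv_n^*(f)$. No appeal to \eqref{Monotony}, \eqref{bidemocracy}, or Lemma~\ref{lem: projectionlemma} is needed, since the subsymmetric machinery becomes trivial for $\SSS=\ell_1$.
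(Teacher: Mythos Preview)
Your proof is correct and follows essentially the same approach as the paper's: both decompose $\sum_{j\in A}a_j=|A|\,\Av(f,\sigma_n)+\sum_{j\in A}(a_j-\Av(f,\sigma_n))$, apply the triangle inequality, and enlarge the oscillation sum from $A$ to $\sigma_n$. The only cosmetic difference is that the paper writes the intermediate step as $|A|\,|\Av(f,\sigma_n)|+\bigl|\sum_{j\in A}(a_j-\Av(f,\sigma_n))\bigr|$ before passing to the absolute-value sum, whereas you go directly to $\sum_{j\in A}|a_j-\alpha_n|$.
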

\begin{proof}We have $\Lambda_m=m$ for all $m$. Hence
\begin{align*}
|\vv_n^*(S_A(f))| & =\left| \sum_{j\in A}  a_j \right| \\
&\le |A|\,  |\Av(f,\sigma_n)|  + \left| \sum_{j\in A}  a_j -\Av(f,\sigma_n) \right| \\
&\le |A| \, |\Av(f,\sigma_n)| + \sum_{j\in \sigma_n}  |a_j -\Av(f,\sigma_n) | \\
& =\frac{ \Lambda_{|A|}}{\Lambda_{|\sigma_n|}}  |\vv_n^*(f)| + \sum_{j\in \sigma_n}  |a_j -\Av(f,\sigma_n) | \\
\end{align*}
\end{proof}

\begin{lemma}\label{lemma:7}
 Let $(\lambda_n)_{n=1}^\infty$ be a non-decreasing sequence of positive scalars such that $(\lambda_n/n)_{n=1}^\infty$ is non-increasing. Then
 \[
 \frac{k}{k+n}\lambda_n +\frac{n}{k+n}\lambda_k\le 2 \lambda_k
 \]
 for all $k$, $n\in\NN$.
  \end{lemma}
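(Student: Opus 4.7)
The plan is to split into two cases depending on whether $n\le k$ or $n>k$, using the two monotonicity hypotheses separately in each regime. This is a purely elementary estimate with no hidden subtleties, so I would keep the argument as short as possible.

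In the first case, when $n\le k$, I would exploit only the fact that $(\lambda_n)$ is non-decreasing. This gives $\lambda_n\le\lambda_k$, so
\[
\frac{k}{k+n}\lambda_n+\frac{n}{k+n}\lambda_k\le\frac{k}{k+n}\lambda_k+\frac{n}{k+n}\lambda_k=\lambda_k\le 2\lambda_k,
\]
which is a stronger bound than needed.

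In the second case, when $n>k$, the non-decreasing hypothesis is not enough, and I would invoke the assumption that $(\lambda_n/n)$ is non-increasing. This yields $\lambda_n/n\le\lambda_k/k$, i.e., $\lambda_n\le (n/k)\lambda_k$, and hence
\[
\frac{k}{k+n}\lambda_n\le\frac{k}{k+n}\cdot\frac{n}{k}\lambda_k=\frac{n}{k+n}\lambda_k.
\]
Adding $\frac{n}{k+n}\lambda_k$ to both sides gives
\[
\frac{k}{k+n}\lambda_n+\frac{n}{k+n}\lambda_k\le\frac{2n}{k+n}\lambda_k\le 2\lambda_k,
\]
completing this case.

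There is really no serious obstacle here: the lemma is just the observation that the two hypotheses on $(\lambda_n)$ are tailored so that one of them controls the ``first'' term in each regime, while the weight factors $k/(k+n)$ and $n/(k+n)$ absorb the ratio $n/k$ (or $1$) that appears. The only minor thing to be careful about is making sure the right monotonicity is used on the right side of the split; once $n$ and $k$ are compared, each case collapses to one line.
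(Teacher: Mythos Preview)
Your proof is correct and follows essentially the same approach as the paper: split into the cases $n\le k$ and $n>k$, using the monotonicity of $(\lambda_n)$ in the first case to get the bound $\lambda_k$, and the monotonicity of $(\lambda_n/n)$ in the second case to get the bound $\tfrac{2n}{k+n}\lambda_k\le 2\lambda_k$. The only cosmetic difference is the order in which the two cases are presented.
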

  \begin{proof}In the case when $k\le n$ we have
  \[
   \frac{k}{k+n}\lambda_n +\frac{n}{k+n}\lambda_k\le  \frac{n}{k+n}\lambda_k + \frac{n}{k+n}\lambda_k =\frac{2n}{n+k}  \lambda_k\le 2 \lambda_k,
   \]
  and in the case $n\le k$,
  \[
   \frac{k}{k+n}\lambda_n +\frac{n}{k+n}\lambda_k\le  \frac{k}{k+n}\lambda_k +\frac{n}{k+n}\lambda_k = \lambda_k.
   \]
  \end{proof}

\begin{lemma}\label{lem: Axnormestimate}Let $\SSS$ be a subsymmetric sequence space and 
$\sigma=(\sigma_n)_{n=1}^\infty$ be an ordered partition of $\NN$. 
  For all $A \subseteq\NN $ and $f \in\SSS $ we have
$$ \|Q_\sigma(S_A(f))\|_\SSS  \le 5 \|Q_\sigma(f)\|_\SSS + 2 \sum_{n= 1}^\infty \frac{\Lambda_{|A_n|}} {\Lambda_{|\sigma_n|}} |\vv_n^*(f)|,$$ 
where $A_n := A \cap \sigma_n$.
\end{lemma}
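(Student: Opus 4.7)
I would split $f$ into $g:=Q_\sigma(f)$ and $h:=P_\sigma(f)$, use linearity of $Q_\sigma\circ S_A$ to write
\[
Q_\sigma(S_A(f))=Q_\sigma(S_A(g))+Q_\sigma(S_A(h)),
\]
and estimate the two pieces separately. Writing $\alpha_n=\Av(f,\sigma_n)$, one has $h=\sum_{n=1}^\infty \alpha_n \sum_{j\in\sigma_n}\ee_j$, and by the definition of $\vv_n^*$ in \eqref{BOSforAP} we have $|\vv_n^*(f)|=\Lambda_{|\sigma_n|}|\alpha_n|$, so that the sum on the right-hand side of the claimed inequality is exactly $2\sum_n |\alpha_n|\Lambda_{|A_n|}$.

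For the ``$g$-term'' I would use only soft structural bounds: since the unit-vector basis of a subsymmetric sequence space is $1$-unconditional, $\|S_A\|_{\SSS\to\SSS}\le 1$; and by \eqref{ComplementAveragingSubsym}, $\|Q_\sigma\|_{\SSS\to\SSS}\le 3$. Composing these yields $\|Q_\sigma(S_A(g))\|_\SSS\le 3\|Q_\sigma(f)\|_\SSS$, which is already stronger than the factor $5$ appearing in the statement.

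For the ``$h$-term'' I would compute everything explicitly: $S_A(h)=\sum_n \alpha_n \sum_{j\in A_n}\ee_j$ and $P_\sigma(S_A(h))=\sum_n \alpha_n(|A_n|/|\sigma_n|)\sum_{j\in\sigma_n}\ee_j$. Since the basis is $1$-subsymmetric, $\bigl\|\sum_{j\in B}\ee_j\bigr\|_\SSS=\Lambda_{|B|}$ for every finite $B\subseteq\NN$, so the triangle inequality gives
\[
\|S_A(h)\|_\SSS\le \sum_{n=1}^\infty |\alpha_n|\Lambda_{|A_n|},\qquad \|P_\sigma(S_A(h))\|_\SSS\le \sum_{n=1}^\infty |\alpha_n|\frac{|A_n|}{|\sigma_n|}\Lambda_{|\sigma_n|}.
\]
The monotonicity \eqref{Monotony} of $(\Lambda_m/m)_{m=1}^\infty$ applied with $|A_n|\le |\sigma_n|$ gives $|A_n|\Lambda_{|\sigma_n|}/|\sigma_n|\le \Lambda_{|A_n|}$, so that $\|Q_\sigma(S_A(h))\|_\SSS\le \|S_A(h)\|_\SSS+\|P_\sigma(S_A(h))\|_\SSS\le 2\sum_n |\alpha_n|\Lambda_{|A_n|}$. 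Adding the two estimates produces the desired inequality (indeed with constant $3$ in place of $5$).

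The main obstacle is technical rather than conceptual: one must justify the formal manipulations when $A$ is infinite, since $S_A$ is defined a~priori on $c_{00}$ and $f$ is only assumed to lie in $\SSS$. I would handle this by applying the argument to $A\cap\{1,\dots,N\}$ and a $c_{00}$-approximant of $f$, then passing to the limit via the continuity on $\SSS$ of $S_A$, $P_\sigma$, $Q_\sigma$, and of each functional $\vv_n^*$, together with a monotone-convergence argument on the non-negative scalar sum $\sum_n |\alpha_n|\Lambda_{|A_n|}$.
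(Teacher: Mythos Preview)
Your proof is correct and uses the same overall decomposition $f=Q_\sigma(f)+P_\sigma(f)$ as the paper, but your treatment of each piece is more direct. For the $g$-term the paper computes $P_\sigma(S_A(g))$ explicitly, compares it via the lattice structure to the refined averaging projection $P_\tau$ associated with the partition $(A_n,B_n)_n$, and bounds $\|P_\tau\|\le 4$ to arrive at the constant $5$; you bypass all of this with the one-line estimate $\|Q_\sigma S_A g\|\le \|Q_\sigma\|\,\|S_A\|\,\|g\|\le 3\|Q_\sigma(f)\|$, which is perfectly valid in a $1$-subsymmetric space and yields the sharper constant $3$. For the $h$-term the paper computes $Q_\sigma(S_A(h))$ coordinatewise and invokes Lemma~\ref{lemma:7} to handle the combination $\tfrac{|B_n|}{|\sigma_n|}\Lambda_{|A_n|}+\tfrac{|A_n|}{|\sigma_n|}\Lambda_{|B_n|}$, whereas you split $Q_\sigma(S_A(h))=S_A(h)-P_\sigma(S_A(h))$ and use only the monotonicity~\eqref{Monotony}; both routes land on the same bound $2\sum_n|\alpha_n|\Lambda_{|A_n|}$. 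Your approach is slightly more economical (it avoids Lemma~\ref{lemma:7} and the refined partition $\tau$), while the paper's explicit computations make the structure of $Q_\sigma(S_A(f))$ more visible.
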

\begin{proof} We write $\|\cdot\|$ rather than $\|\cdot\|_\SSS$ as there is no possibility of confusion in this proof.
Put $B_n=\sigma_n\setminus A_n$. 
For every $f\in Q_\sigma(\SSS)$ note that 
\[
\Av(f,A_n) |A_n| +\Av(f,B_n) |B_n| =0
\]
and, hence,
$
P_\sigma(S_A(f))=(y_j)_{j=1}^\infty,
$
where \[y_j=\begin{cases}\displaystyle  \Av(f,A_n) \frac{|A_n|} {|\sigma_n|} & \text{ if } j\in A_n \\ 
- \displaystyle \Av(f,B_n) \frac{|B_n|}{|\sigma_n|} & \text{ if } j\in B_n.
\end{cases}\]

Let $P_{\tau}$ be the averaging projection with respect to the partition
$\tau=(A_n,B_n)_{n=1}^\infty$. We infer from Theorem~\ref{AveragingSubsymmetric}
that $\Vert P_{\tau}\Vert_{\SSS\to\SSS}\le 4$. Taking into account  the lattice structure  of $\SSS$ we get
\[
\Vert Q_\sigma(S_A(f))\Vert 
\le 
\Vert P_\sigma(S_A(f))\Vert + \Vert S_A(f)\Vert
 \le \Vert P_{\tau}(f)\Vert +\Vert f\Vert 
\le 5 \Vert f\Vert.
\]
Assume that $f\in P_\sigma(\SSS)$. 
Pick $(b_n)_{n=1}^\infty$ such that $\ee_j^*(f)=b_n$ if $j\in\sigma_n$. 
Then
$Q_\sigma(S_A(f))=(c_j)_{j=1}^\infty$, where 
\[
c_j=\begin{cases}\displaystyle b_n  \frac{|B_n|}{|\sigma_n|} & \text{ if } j\in A_n \\ 
\displaystyle-b_n \frac{|A_n|}{|\sigma_n|} & \text{ if } j\in B_n.
\end{cases}
\]
Therefore, taking into account  Lemma~\ref{lemma:7},
\begin{align*}
\Vert Q_\sigma(S_A(f))\Vert 
&\le\sum_{n=1}^\infty |b_n| 
\left( \frac{|B_n|}{|\sigma_n|}  \Lambda_{|A_n|} + \frac{|A_n|}{|\sigma_n|}  \Lambda_{|B_n|}\right)\\
&\le 2\sum_{n=1}^\infty |b_n|  \Lambda_{|A_n|}\\
&= 2\sum_{n=1}^\infty   \frac{\Lambda_{|A_n|}}{\Lambda_{|\sigma_n|}} | \vv_n^*(f)|.
\end{align*}
We complete the proof by expressing any $f\in\SSS$ in the form $f_1+f_2$ with $f_1\in P_\sigma(\SSS)$ and
$f_2\in Q_\sigma(\SSS)$, and combining.
\end{proof}

\begin{lemma}\label{lem: regularityrstimate} 
Let $(N_n)_{n=1}^\infty$ a sequence  of positive integers such that $\sum_{k=1}^n  N_k \lesssim N_{n+1}$ for $n\in\NN$, and let $(\lambda_n)_{n=1}^\infty$ be a sequence of positive scalars having  the LRP. Assume that
$(\lambda_n)_{n=1}^\infty$  and  $(n/\lambda_n)_{n=1}^\infty$ are non-decreasing. Then
\[ 
\sup_{r\in\NN} \sum_{n=r}^\infty \frac{\lambda_{N_r}}{\lambda_{N_n}} <\infty.
\]
\end{lemma}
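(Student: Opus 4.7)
The plan is to exploit two geometric comparisons: a geometric lower bound on $N_n/N_r$ coming from the recurrence $\sum_{k=1}^{n}N_k\lesssim N_{n+1}$, and a power-type lower bound on $\lambda_{N_n}/\lambda_{N_r}$ coming from the LRP. Combining them reduces the statement to a convergent geometric series whose sum is uniform in $r$.

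The first step is to set $M_r=\sum_{k=1}^{r}N_k$, fix $C>0$ with $M_r\le CN_{r+1}$ for all $r$, and observe that
\[
M_{r+1}=M_r+N_{r+1}\ge \left(1+\tfrac{1}{C}\right)M_r.
\]
Writing $\alpha:=1+1/C>1$, this iterates to $M_n\ge \alpha^{n-r}M_r$ for $n\ge r$. The same hypothesis yields $M_n\le (C+1)N_n$ (trivially for $n=1$), while $M_r\ge N_r$; combining these estimates delivers the crucial comparison
\[
\frac{N_n}{N_r}\ge \frac{\alpha^{n-r}}{C+1},\qquad n\ge r\ge 1.
\]

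The second step converts the LRP into a power law: iterating $\lambda_{bm}\ge 2\lambda_m$ gives $\lambda_{b^{k}m}\ge 2^{k}\lambda_m$, and choosing $k=\lfloor\log_b(n/m)\rfloor$ for $n\ge m$ together with the monotonicity of $(\lambda_n)_{n=1}^\infty$ produces
\[
\frac{\lambda_m}{\lambda_n}\le 2\left(\frac{m}{n}\right)^{s},\qquad n\ge m,
\]
with $s:=(\log 2)/(\log b)>0$.

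Finally, I would fix $K\in\NN$ with $\alpha^{K}\ge C+1$, so that $N_n\ge N_r$ whenever $n\ge r+K$. For such $n$ the two displayed comparisons combine to give $\lambda_{N_r}/\lambda_{N_n}\le 2(C+1)^{s}\alpha^{-s(n-r)}$, a summable geometric tail. For the at most $K$ indices with $r\le n<r+K$, the non-decreasing character of $(n/\lambda_n)_{n=1}^\infty$ yields $\lambda_{N_r}/\lambda_{N_n}\le N_r/N_n\le C+1$ whenever $N_r\ge N_n$, and the ratio is trivially at most $1$ otherwise. The total sum is thereby bounded by a quantity depending only on $b$ and $C$, hence uniform in $r$. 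The main obstacle is precisely the treatment of those first $K$ terms, where $N_n$ may be slightly smaller than $N_r$; the non-decreasing hypothesis on $(n/\lambda_n)_{n=1}^\infty$ is exactly what is needed to absorb them.
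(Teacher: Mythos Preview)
Your proof is correct and follows essentially the same strategy as the paper's: establish geometric growth of the partial sums $M_n=\sum_{k\le n}N_k$, convert the LRP into a power-type estimate $\lambda_m/\lambda_n\lesssim (m/n)^s$ for $m\le n$, and reduce the sum to a convergent geometric series uniform in $r$. The only organizational difference is that the paper avoids your case split on the first $K$ terms by replacing $\lambda_{N_n}$ with $\lambda_{M_n}$ in the denominator (using $(n/\lambda_n)$ non-decreasing, at the cost of the harmless factor $M_n/N_n\le C_2$) and $\lambda_{N_r}$ with $\lambda_{M_r}$ in the numerator, so that the power-law applies directly since $M_r\le M_n$ for all $n\ge r$; your separate treatment of the indices $r\le n<r+K$ is an equally valid way to handle the possibility $N_n<N_r$.
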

 \begin{proof}
 Since $(\lambda_n)_{n=1}^\infty$ has the LRP,
appealing to \cite{AA2015}*{Theorem 2.12}
 we claim the existence of $0<\alpha<1$ and $0<C_1<\infty$ such that
\[
\frac{\lambda_n}{n^\alpha} \le C_1 \frac{\lambda_m}{m^\alpha} \text{ for }n\le m.
\]
Let $C_2>1$ be such that
$M_r:=\sum_{n=1}^r N_n\le C_2 N_r$ for all $r\in\NN$.  Then, if $t=C_2/(1+C_2)$, we have
 $M_{r+1}\le t  M_r $ for $r\in\NN$.
Hence
\begin{align*}
 \sum_{n=r}^\infty \frac{\lambda_{N_r}}{\lambda_{N_n}}
& \le  \sum_{j=n}^\infty\frac{M_n}{N_n} \frac{\lambda_{N_r}}{\lambda_{M_n}}
  \le  C_2 \sum_{n=r}^\infty  \frac{\lambda_{M_r}}{\lambda_{M_n}}
\le C_1 C_2 \sum_{n=r}^\infty \left(\frac{M_r}{M_n}\right)^\alpha\\
&\le  C_1 C_2 \sum_{n=r}^\infty  t^{\alpha(n-r)}
= C_1 C_2 \frac{1}{1-t^\alpha},
\end{align*}
as desired.
\end{proof}
\begin{lemma}\label{lem: projectionnormestimate} 
Assume that all the hypotheses of Theorem~\ref{EmbeddingTheorem} hold  and that 
either $\SSS=\ell_1$ or $(\Lambda_n)_{n=1}^\infty$ has both 
 LRP and the URP. Then there exists a constant $C_a$ such that  whenever $A$ and $r$ are such that $A \subset \cup_{n= r}^\infty \sigma_n$
 and $|A| \le M_r$,
we have $\Vert S_A(f)\Vert_{\BB,\SSS,\sigma}
 \le  C_a\Vert f\Vert_{\BB,\SSS,\sigma}$ for all $f\in c_{00}$.\end{lemma}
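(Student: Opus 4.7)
The plan is to bound each piece of
\[\|S_A f\|_{\BB,\SSS,\sigma} = \|Q_\sigma(S_A f)\|_\SSS + \Big\|\sum_n \vv_n^*(S_A f) \xx_n\Big\|_\XX\]
by a constant times $\|f\|_{\BB,\SSS,\sigma}$, assuming (as we may) that $\BB$ is bimonotone and normalized. The crucial preliminary is a pair of summation estimates. Since $(|\sigma_n|)$ satisfies $\sum_{k=1}^n |\sigma_k| \lesssim |\sigma_{n+1}|$ by \eqref{conditionA} and $(\Lambda_n)$ has the LRP (trivially when $\SSS=\ell_1$; by hypothesis otherwise), Lemma \ref{lem: regularityrstimate} applied to $(\Lambda_n)$, combined with the easy consequence $\Lambda_{M_r} \le C_2 \Lambda_{|\sigma_r|}$ of \eqref{Monotony}, yields $\sup_r \sum_{n \ge r} \Lambda_{M_r}/\Lambda_{|\sigma_n|} \le K_1 < \infty$. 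In the general case, the URP of $(\Lambda_n)$ translates (via the relation $\Lambda^*_m = m/\Lambda_m$) to the LRP of $(\Lambda^*_n)$, so Lemma \ref{lem: regularityrstimate} applied to $(\Lambda^*_n)$ delivers $\sup_r \sum_{n \ge r} \Lambda^*_{M_r}/\Lambda^*_{|\sigma_n|} \le K_2 < \infty$ as well.

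For the first piece $\|Q_\sigma(S_A f)\|_\SSS$ I invoke Lemma \ref{lem: Axnormestimate}. Because $A_n := A \cap \sigma_n$ is empty for $n<r$ and $|A_n| \le |A| \le M_r$ for $n\ge r$, the residual sum in that lemma is at most $K_1 \max_n |\vv_n^*(f)| \le K_1 \|\sum_n \vv_n^*(f) \xx_n\|_\XX$ by bimonotonicity and normality of $\BB$, giving $\|Q_\sigma(S_A f)\|_\SSS \lesssim \|f\|_{\BB,\SSS,\sigma}$. For the second piece the normality of $\BB$ justifies the crude bound $\|\sum_n \vv_n^*(S_A f) \xx_n\|_\XX \le \sum_{n \ge r} |\vv_n^*(S_A f)|$, and I split $f = P_\sigma f + Q_\sigma f$. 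The $P_\sigma f$-contribution is $\sum_{n \ge r} (|A_n|/|\sigma_n|) |\vv_n^*(f)|$; the monotonicity inequality $|A_n|/|\sigma_n| \le \Lambda_{|A_n|}/\Lambda_{|\sigma_n|}$ (a direct consequence of \eqref{Monotony}) together with the $K_1$-estimate shows this is $\lesssim \|\sum_n \vv_n^*(f) \xx_n\|_\XX$. The $Q_\sigma f$-contribution splits into the two cases: in the $\ell_1$ case Lemma \ref{lem: projectionlemmal1} applied to $Q_\sigma f$ (whose average over each $\sigma_n$ vanishes) gives $|\vv_n^*(S_{A_n} Q_\sigma f)| \le \|S_{\sigma_n} Q_\sigma f\|_{\ell_1}$, which telescopes across the disjoint $\sigma_n$'s into $\|Q_\sigma f\|_{\ell_1}$; in the general case Lemma \ref{lem: projectionlemma} (combined with $\|S_{\sigma_n}(Q_\sigma f)\|_\SSS \le \|Q_\sigma f\|_\SSS$) yields $|\vv_n^*(S_{A_n} Q_\sigma f)| \le 2(\Lambda^*_{|A_n|}/\Lambda^*_{|\sigma_n|}) \|Q_\sigma f\|_\SSS$, and $K_2$ closes the estimate.

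The main obstacle is that the $\ell_1$ case is \emph{not} covered by the general LRP-plus-URP argument, because $\Lambda_n = n$ lacks URP: then $\Lambda^*_n \equiv 1$ and the $\Lambda^*$-summation that handled the general case collapses. The remedy is precisely why Lemma \ref{lem: projectionlemmal1} was isolated, since it weights $|\vv_n^*(f)|$ by the attenuating factor $|A_n|/|\sigma_n|$ (rather than by the useless constant $\Lambda^*_{|A_n|}/\Lambda^*_{|\sigma_n|} = 1$) and leaves a residual $\ell_1$-error that telescopes across disjoint blocks. Combining the bounds for the two pieces yields $\|S_A f\|_{\BB,\SSS,\sigma} \le C_a \|f\|_{\BB,\SSS,\sigma}$ with $C_a$ depending only on $C_2$, the regularity constants of $(\Lambda_n)$, and the universal constants from the invoked lemmas.
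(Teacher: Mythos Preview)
Your proof is correct and follows essentially the same route as the paper's: the same two summation bounds from Lemma~\ref{lem: regularityrstimate} (one for $\Lambda$ via the LRP, one for $\Lambda^*$ via the URP), Lemma~\ref{lem: Axnormestimate} for the $Q_\sigma$-piece, and Lemmas~\ref{lem: projectionlemma}/\ref{lem: projectionlemmal1} for the $\XX$-piece. The only organizational difference is that you split $f=P_\sigma f+Q_\sigma f$ before estimating $\sum_{n\ge r}|\vv_n^*(S_{A_n}f)|$, whereas the paper applies the lemmas directly to $f$ (using Lemmas~\ref{lem: normequivalence5} and~\ref{CommutingLemma} to pass from $\|S_{\sigma_n}(f)\|_\SSS$ to $\|f\|_{\BB,\SSS,\sigma}$ in the general case); once unpacked the two computations coincide term by term.
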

\begin{proof} 
Without loss of generality assume that $\BB=(\xx_n)_{n=1}^\infty$ is  normalized and bimonotone.
Put $A_n=A\cap \sigma_n$. By assumption there is $r\in\NN$ such that $A_n= \emptyset$ for $1 \le n\le r-1$ and, then,
\[
\left\|\sum_{n=1}^\infty \vv_n^*(S_A(f)) \, \xx_n \right\|_\XX 
\le \sum_{n=1}^\infty  |\vv_n^*(S_A(f))|
=\sum_{n=r}^\infty  |\vv_n^*(S_{A_n}(f))|.
\]
 In the case when $(\Lambda_n)_{n=1}^\infty$ has the URP, we infer from
 inequality \eqref{Monotony} and \cite{AA2015}*{Theorem 2.12} that $(\Lambda^*_n)_{n=1}^\infty$  has the LRP.
Using the fact that $(\Lambda^*_n)_{n=1}^\infty$ is doubling and Lemma~\ref{lem: regularityrstimate} gives
\[
\quad C_1:=\sup_{r}  \frac{\Lambda^*_{M_r}}{\Lambda^*_{|\sigma_r|}}<\infty, \quad
C_2=\sup_r \sum_{n=r}^\infty  \frac{\Lambda^*_{|\sigma_r|}}{ \Lambda^*_{|\sigma_n|}}<\infty.
 \]
 Then, by Lemma~\ref{lem: projectionlemma}, Lemma~\ref{lem: normequivalence5} and Lemma~\ref{CommutingLemma},
 \begin{align*}
\sum_{n=r}^\infty  |\vv_n^*(S_{A_n}(f))|
 &\le 2 \sum_{n=r}^\infty \frac {\Lambda^*_{|A_n|}} {\Lambda^*_{|\sigma_n|}}  \Vert S_{\sigma_n}(f)\Vert_\SSS\\
&\le 2 \sum_{n=r}^\infty \frac {\Lambda^*_{M_r}} {\Lambda^*_{|\sigma_n|}}\Vert S_{\sigma_n}(f) \Vert_{\BB,\SSS,\sigma} \\
& \le 2 C_1 \Vert f \Vert_{\BB,\SSS,\sigma}  \sum_{n=r}^\infty \frac {\Lambda^*_{|\sigma_r|}}{\Lambda^*_{|\sigma_n|}}\\
& \le 2 C_1 C_2 \Vert f \Vert_{\BB,\SSS,\sigma}.
\end{align*}  
In the case when $\SSS=\ell_1$, Lemma~\ref{lem: projectionlemmal1} gives
 \begin{align*}
 \sum_{n=r}^\infty  |\vv_n^*(S_{A_n}(f))|
&\le
 \sum_{n=r}^\infty \frac{\Lambda_{|A_n|}}{\Lambda_{|\sigma_n|}} |\vv_n^*(f) |
+\sum_{n=r}^\infty \sum_{j\in \sigma_n} |a_j-\Av(f,\sigma_n)|\\
&\le \Vert Q_\sigma(f)\Vert_1+ \sum_{n=r}^\infty \frac{\Lambda_{|A_n|}}{\Lambda_{|\sigma_n|}} |\vv_n^*(f) |
  \end{align*}
The sequence $(\Lambda_n)_{n=1}^\infty$ is also doubling, and
our hypothesis always gives that it has the LRP. Hence, appealing again to Lemma~\ref{lem: regularityrstimate},
 \[
 C_3:=\sup_{r}  \frac{\Lambda_{M_r}}{\Lambda_{|\sigma_r|}}<\infty,\quad
   C_4=\sup_r \sum_{n=r}^\infty  \frac{\Lambda_{|\sigma_r|}}{ \Lambda_{|\sigma_n|}}<\infty.
  \]
Consequently,
 \begin{align*}
\sum_{n=1}^\infty \frac{\Lambda_{|A_n|}}{\Lambda_{|\sigma_n|}}|\vv_n^*(f) |
&\le \sum_{n=r}^\infty \frac{\Lambda_{M_r}}{\Lambda_{|\sigma_n|}}|\vv_n^*(f)|\\
&\le C_3 \sum_{n=r}^\infty \frac{\Lambda_{|\sigma_r|}}{\Lambda_{|\sigma_n|}} |\vv_n^*(f) | \\
& \le C_3 C_4 \left\Vert \sum_{n=1}^\infty  \vv_n^*(f) \, \xx_n\right\Vert_\XX.
\end{align*}
Using Lemma~\ref{lem: Axnormestimate} and combining we get the desired result with $C_a=
    2 C_1 C_2+\max\{5, 2C_3 C_4\}$ in the case when $(\Lambda_n)$ has both the LRP and the URP,
 and 
 $C_a=\max\{6, 3 C_1 C_2\}$ when $\SSS=\ell_1$.
 \end{proof}

\begin{theorem}[cf.\ \cite{DKK2003}*{Theorem 7.1}]
\label{QGTheorem}Assume that all the hypotheses of Theorem~\ref{EmbeddingTheorem} hold and that either $\SSS=\ell_1$ or  $(\Lambda_n)_{n=1}^\infty$  has both the LRP and the URP. Then the unit-vector system is an almost greedy basis for $\YY[\BB,\SSS,\sigma]$ with fundamental function equivalent  to  $(\Lambda_n)_{n=1}^\infty$.
\end{theorem}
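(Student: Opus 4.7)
The plan is to verify the two properties whose conjunction characterizes almost greediness, as recalled in Section~\ref{Introduction}: super-democracy with fundamental function equivalent to $(\Lambda_n)_{n=1}^\infty$, and quasi-greediness. The super-democracy claim is immediate from Corollary~\ref{EmbeddingTwo}, which furnishes the sandwich
\[
d_1(\ww) \subseteq \YY[\BB,\SSS,\sigma] \subseteq d_1^\infty(\ww)
\]
with $\ww = (\Lambda_n - \Lambda_{n-1})_{n=1}^\infty$; as noted at the start of Subsection~\ref{DKKDemocratic}, such a sandwich automatically yields super-democracy of the unit-vector basis with fundamental function equivalent to $(\sum_{k=1}^{n} w_k)_{n=1}^\infty = (\Lambda_n)_{n=1}^\infty$. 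Note that the LRP required by Corollary~\ref{EmbeddingTwo} is available in both cases, since when $\SSS = \ell_1$ we have $\Lambda_n = n$, which has the LRP trivially (take $b=2$).

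The bulk of the work is establishing quasi-greediness, for which the central tool is Lemma~\ref{lem: projectionnormestimate}. Given $f \in c_{00}$ and a greedy set $F$ with $|F| = m$, my plan is to choose the minimal $r$ with $m \le M_r$ (so that $M_{r-1} < m \le M_r$, setting $M_0 = 0$) and to split $F = F_1 \sqcup F_2$ with $F_2 := F \setminus [1, M_{r-1}] \subseteq \bigcup_{n \ge r} \sigma_n$. Since $|F_2| \le m \le M_r$, the hypotheses of Lemma~\ref{lem: projectionnormestimate} hold directly for $F_2$, giving $\|S_{F_2}(f)\|_{\BB,\SSS,\sigma} \le C_a \|f\|_{\BB,\SSS,\sigma}$. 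For the head piece $F_1 \subseteq [1, M_{r-1}]$, I pass to the direct-sum decomposition $\YY[\BB,\SSS,\sigma] \approx Q_\sigma(\SSS) \oplus \XX$ of Theorem~\ref{FirstProperties}(b) and bound the two coordinates separately: the $Q_\sigma$-coordinate $\|Q_\sigma(S_{F_1}(f))\|_\SSS$ by Lemma~\ref{lem: Axnormestimate}, and the $\XX$-coordinates $\vv_n^*(S_{F_1}(f))$ for $n < r$ via Lemma~\ref{lem: projectionlemma} (or Lemma~\ref{lem: projectionlemmal1} when $\SSS = \ell_1$), exploiting the greedy threshold $\alpha := \min_{j \in F}|\ee_j^*(f)|$ which dominates all coefficients outside $F$.

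The main obstacle is precisely the head contribution from $F_1$: a direct application of Lemma~\ref{lem: projectionnormestimate} is unavailable since $F_1$ is not confined to a tail of blocks, and a naive iterated head/tail splitting produces a sum of order $\log m$ copies of $C_a \|f\|$ which fails to close up. Following~\cite{DKK2003}*{Theorem~7.1}, the remedy is to use the hypothesis~\eqref{conditionA} and the LRP/URP of $(\Lambda_n)$ through Lemma~\ref{lem: regularityrstimate}: the resulting geometric series, built from the ratios $\Lambda_{|\sigma_r|}/\Lambda_{|\sigma_n|}$ (or $\Lambda^*_{|\sigma_r|}/\Lambda^*_{|\sigma_n|}$ under the URP), sum to a uniform constant and convert the iterated splitting into a single bounded estimate, mirroring the role these sums already play inside the proof of Lemma~\ref{lem: projectionnormestimate}. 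Combining with the $F_2$-bound yields $\|S_F(f)\|_{\BB,\SSS,\sigma} \lesssim \|f\|_{\BB,\SSS,\sigma}$, i.e.\ quasi-greediness, and together with the super-democracy established above, this delivers almost greediness of $\EE$ in $\YY[\BB,\SSS,\sigma]$ with fundamental function equivalent to $(\Lambda_n)_{n=1}^\infty$.
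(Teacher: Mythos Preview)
Your super-democracy argument and your handling of the tail piece $F_2$ via Lemma~\ref{lem: projectionnormestimate} are both correct and match the paper. The gap is in your treatment of the head piece $F_1$.

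You propose to bound $\|S_{F_1}(f)\|_{\BB,\SSS,\sigma}$ directly by estimating its $Q_\sigma$- and $\XX$-coordinates through Lemmas~\ref{lem: Axnormestimate} and~\ref{lem: projectionlemma}, and then to close up using Lemma~\ref{lem: regularityrstimate}. But Lemma~\ref{lem: regularityrstimate} controls \emph{tail} sums $\sum_{n\ge r}\Lambda_{|\sigma_r|}/\Lambda_{|\sigma_n|}$, whereas for $F_1\subseteq\bigcup_{n<r}\sigma_n$ you would need \emph{head} sums. Concretely, Lemma~\ref{lem: Axnormestimate} applied to $F_1$ leaves you with $\sum_{n<r}\frac{\Lambda_{|F_1\cap\sigma_n|}}{\Lambda_{|\sigma_n|}}|\vv_n^*(f)|$, and since each ratio may equal $1$ this is only bounded by $(r-1)\sup_n|\vv_n^*(f)|$, which blows up. The greedy threshold $\alpha$ does not help here either: it bounds coefficients \emph{outside} $F$, while $F_1\subseteq F$ consists of the \emph{large} coefficients.

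The paper's remedy is to split at $m$ rather than at the block boundary $M_{r-1}$, and to pass to the complement. Writing $A=[1,m]\setminus F$ and $B=F\setminus[1,m]$ gives $S_F=S_m-S_A+S_B$ with $|A|=|B|=:s\le m\le M_r$. Then $S_m$ is a partial-sum projection (bounded by Theorem~\ref{FirstProperties}(a)); $B\subseteq\bigcup_{n\ge r}\sigma_n$ so Lemma~\ref{lem: projectionnormestimate} applies to $S_B$; and, crucially, $A\subseteq F^c$, so every coefficient on $A$ is at most $a_s^*$, whence the embedding of Corollary~\ref{EmbeddingTwo} gives $\|S_A(f)\|_{\BB,\SSS,\sigma}\le C_2\,\Lambda_s\,a_s^*\le C_1C_2\|f\|_{\BB,\SSS,\sigma}$. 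This complement trick---replacing a large-coefficient set contained in $F$ by a small-coefficient set contained in $F^c$---is the idea your argument is missing.
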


\begin{proof}
Assume that $\BB$ is bi-monotone. By  Corollary~\ref{EmbeddingTwo}, which asserts that for all $f\in c_{00}$ and some positive constants $C_{1}, C_{2}$,
\[
\frac{1}{C_{1}}\Vert f\Vert_{d_1^\infty(\ww)}\le \Vert f \Vert_{\BB,\SSS,\sigma} \le  C_{2}\Vert f\Vert_{d_1(\ww)},
\]
 it suffices to prove that
$\EE$ is a quasi-greedy basis for  $\YY:=\YY[\BB,\SSS,\sigma]$. Let $C_a$ be as in 
Lemma~\ref{lem: projectionnormestimate}. By  Theorem~\ref{FirstProperties} (a),
 \[
C_b:=\sup_{m\in\NN} \Vert \Id_{\FF^\NN} -S_m\Vert_{\YY\to\YY}<\infty.
\]

Let $f=(a_j)_{n=1}^\infty\in c_{00}$ and let $F\subseteq\NN$ be a non-empty set such that $|a_j|\le |a_k|$ whenever $k\in F$ and $j\in\NN\setminus F$. Denote $m=|F|$ and pick
$r\in\NN$ such that  $m \in \sigma_r$. Let $A=[1,m]\setminus F$ and
$B=  F\cap [m+1,\infty)$. We have
\[
F\cup A=\{1,\dots,m\}\cup B, \quad F\cap A=\{1,\dots,m\}\cap B=\emptyset.
\]
Therefore
$s:=|A|=|B|\le m \le M_r$,  $B\subseteq \cup_{n = r}^\infty \sigma_n$ and 
 \[
S_F(f)=S_m(f)-S_A(f)+S_B(f).
\]
We infer that
$
\Vert S_B(f)\Vert_{\BB,\SSS,\sigma}\le C_a \Vert f\Vert_{\BB,\SSS,\sigma}
$ and that,
if $(a_n^*)_{n=1}^\infty$ is the non-increasing rearrangement of $f$, $|a_j|\le a_s^*$ for all $j\in A$.
Then, 
\[
\Vert S_A(f)\Vert_{\BB,\SSS,\sigma} \le C_{2} \max_{j\in A}|a_j| \Lambda_s\\
\le a_s^* \Lambda_r\\
\le 
C_{1}C_{2} \Vert f\Vert_{\BB,\SSS,\sigma}.
\]
Combining we get 
\begin{align*}
\Vert f -S_F(f)\Vert_{\BB,\SSS,\sigma}
&\le \Vert f -S_{k}(f)\Vert_{\BB,\SSS,\sigma}
+\Vert S_A(f)\Vert_{\BB,\SSS,\sigma}
+\Vert S_B(f)\Vert_{\BB,\SSS,\sigma}\\
& \le (C_b + C_a +C_1 C_2) \Vert f\Vert_{\BB,\SSS,\sigma}.
\end{align*}
That is, the unit vector system is  $(C_b + C_a +C_1 C_2)$-quasi-greedy.
\end{proof}

\begin{remark} Note that $\eqref{conditionA}$ implies
\begin{equation}\label{Neweq}
\log\left(\sum_{n=1}^{r}|\sigma_{n}| \right)\gtrsim r,\quad r\in \NN.
\end{equation}
This exponential growth is  essentially optimal. Indeed, Theorem~\ref{CharacterizationSR}(a) implies that   when the DKK-method is applied to a basis $\BB$ for a Banach space $\XX$ with  $L_m[\BB, \XX] \approx m$ (such as  the summing basis of $c_0$) it can only  produce a Banach space $\YY[\BB,\SSS,\sigma]$ for which the  unit-vector basis  is quasi-greedy  when
\eqref{Neweq} holds.
\end{remark}

\section{Banach spaces having quasi-greedy bases with large conditionality constants}
\label{Main}

\noindent The conductive thread of this section is the search for results that will allow us to include the spaces $Z_{p,q}$, $B_{p,q}$, and $D_{p,q}$ (see Section~\ref{Introduction}) in the list of Banach spaces possessing highly conditional quasi-greedy bases.
We recall that the matrix spaces $Z_{p,q}$ are isomorphic to Besov spaces over Euclidean spaces (see, e.g., \cite{AA2016}) and that the  mixed-norm spaces $B_{p,q}$ are isomorphic to Besov spaces over the unit interval (see, e.g., \cite{AA2017}*{Appendix 4.2}). 

Apart from the trivial cases,  namely 
\[D_{q,q}\approx Z_{q,q}\approx B_{q,q}\approx \ell_q, \quad  1\le q<\infty,\] and the case \begin{equation}\label{Peuchinski}
\ell_q \approx B_{2,q}, \quad 1<q<\infty,
\end{equation}
all the above-mentioned    spaces are mutually non-isomorphic (see \cite{AA2017}).
 The isomorphism in  \eqref{Peuchinski} was obtained by Pe{\l}czy{\'n}ski  in \cite{Pel1960}  by combining the uniform complemented embeddings
\begin{equation}\label{PeuRad}
\ell_2^n \lesssim_c \ell_p^{2^n}  \text{ for } n\in\NN \text{, if } 1<p<\infty,
\end{equation}
 (which can  be obtained as a consequence of the boundedness of the Rademacher projections in  $L_p$) 
 with the Pe{\l}czy{\'n}ski  decomposition technique (see, e.g., \cite{AlbiacKalton2016}*{Theorem 2.2.3}).
Another well-known consequence  of Pe{\l}czy{\'n}ski  decomposition technique, 
is that for any unbounded sequence of integers $(d_n)_{n=1}^\infty$   we have
\begin{equation}\label{BesovIso}
B_{p,q}\approx(\oplus_{n=1}^\infty \ell_p^{d_n})_q, \quad p\in [1,\infty], \, q\in\{0\}\cup[1,\infty),
\end{equation}
(see, e.g.,  \cite{AA2017}*{Appendix 4.1}.)

\begin{theorem}\label{MainDKKTheorem}Let $\XX$ be a Banach space with a basis $\BB$ and suppose that either $\SSS=\ell_1$ or 
$\SSS$ is a subsymmetric sequence space with nontrivial type.
 Assume that  $\SSS\lesssim_c \XX$ ant that $L_m[\BB]\gtrsim \delta(m)$ for $m\in\NN$ for some doubling non-decreasing function $\delta\colon[0,\infty)\to[0,\infty)$.  
Then there is an almost greedy basis $\BB_{\kappa}$ for $\XX$ 
with fundamental function equivalent to   $(\Vert\sum_{j=1}^n \ee_j\Vert_\SSS)_{n=1}^{\infty}$, 
 and with
$L_m[\BB_{\kappa}]\gtrsim \delta(\log m)$ for $m\in\NN$.
\end{theorem}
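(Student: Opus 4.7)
My plan is to apply the DKK-method of Section~\ref{DKKMethod} to the given pair $(\BB,\SSS)$ and then transport the resulting almost greedy basis from the DKK space back to $\XX$ via an isomorphism.

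First, I would choose an ordered partition $\sigma=(\sigma_n)_{n=1}^{\infty}$ of $\NN$ whose block sizes grow geometrically, for instance $|\sigma_n|=2^n$, so that both structural conditions needed downstream hold simultaneously: $M_r:=\sum_{n=1}^r|\sigma_n|\lesssim|\sigma_{r+1}|$ is the hypothesis \eqref{conditionA} required by Theorem~\ref{EmbeddingTheorem}, Corollary~\ref{EmbeddingTwo}, Lemma~\ref{lem: projectionnormestimate}, and Theorem~\ref{QGTheorem}, while $\log M_r\lesssim r$ is the hypothesis \eqref{conditionB} of Proposition~\ref{ConditionalityConstantsRelationTwo}. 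With this $\sigma$, set $\YY:=\YY[\BB,\SSS,\sigma]$. The dichotomy on $\SSS$ is tailored to supply the regularity of the fundamental function $(\Lambda_n)_{n=1}^{\infty}$ needed by Theorem~\ref{QGTheorem}: if $\SSS=\ell_1$ there is nothing to check, while if $\SSS$ has nontrivial type then the URP of $(\Lambda_n)_{n=1}^{\infty}$ follows from $\Lambda_n\lesssim n^{1/p}$ for some $p>1$, and the LRP follows from the fact that a subsymmetric basis of nontrivial type has nontrivial cotype, so that $\Lambda_n\gtrsim n^{1/q}$ for some $q<\infty$. Theorem~\ref{QGTheorem} then guarantees that the unit-vector system $\EE$ is an almost greedy basis of $\YY$ whose fundamental function is equivalent to $(\Lambda_n)_{n=1}^{\infty}$, and Proposition~\ref{ConditionalityConstantsRelationTwo} yields $L_m[\EE,\YY]\gtrsim \delta(\log m)$.

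The heart of the argument is now to identify $\YY$ with $\XX$ up to isomorphism. By Theorem~\ref{FirstProperties}(b), $\YY\approx Q_\sigma(\SSS)\oplus\XX$, so the task is to prove $\XX\oplus Q_\sigma(\SSS)\approx\XX$. Since $\SSS\lesssim_c\XX$, I would write $\XX\approx\SSS\oplus Z$, and then from $\SSS\approx\SSS\oplus\SSS$ (Remark~\ref{SubSymSquare}) a routine absorption gives $\XX\oplus\SSS\approx\XX$. The step requiring the most care---and indeed the main obstacle---is absorbing the smaller complemented piece $Q_\sigma(\SSS)\lesssim_c\SSS$ into $\XX$. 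For this I would observe that $\XX\oplus Q_\sigma(\SSS)\lesssim_c\XX\oplus\SSS\approx\XX$ while $\XX\lesssim_c\XX\oplus Q_\sigma(\SSS)$ is trivial, and then invoke a Pe{\l}czy{\'n}ski decomposition argument, leveraging the self-square property of $\SSS$ transmitted through the splitting $\XX\approx\SSS\oplus Z$, to conclude $\XX\oplus Q_\sigma(\SSS)\approx\XX$, and hence $\YY\approx\XX$.

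Finally, fixing an isomorphism $T\colon\YY\to\XX$, I would set $\BB_\kappa:=T(\EE)$. Since almost greediness, the fundamental function, and the conditionality constants $L_m[\cdot]$ are all preserved under an isomorphism up to a multiplicative constant depending only on $\|T\|\,\|T^{-1}\|$, the basis $\BB_\kappa$ inherits from $\EE$ all the desired properties: it is almost greedy, its fundamental function is equivalent to $(\Lambda_n)_{n=1}^{\infty}$, and $L_m[\BB_\kappa,\XX]\gtrsim\delta(\log m)$, as required.
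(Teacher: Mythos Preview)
Your overall strategy matches the paper's, and your handling of the partition $\sigma$, of Theorem~\ref{QGTheorem}, and of Proposition~\ref{ConditionalityConstantsRelationTwo} is fine. The gap is in the final isomorphism $\YY\approx\XX$.

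You apply the DKK-method to $\BB$ itself, obtaining $\YY=\YY[\BB,\SSS,\sigma]\approx Q_\sigma(\SSS)\oplus\XX$, and then try to absorb $Q_\sigma(\SSS)$ into $\XX$ via Pe{\l}czy\'nski decomposition. But the hypotheses of that technique are not met: from $\SSS\lesssim_c\XX$ you get $\XX\approx\SSS\oplus Z$ and hence $\XX\oplus\SSS\approx\XX$, but to conclude $\XX\oplus Q_\sigma(\SSS)\approx\XX$ you would need either $\XX\approx\XX\oplus\XX$, or $\bigl(\XX\oplus Q_\sigma(\SSS)\bigr)^2\approx\XX\oplus Q_\sigma(\SSS)$, or $\SSS\approx\ell_p(\SSS)$ for some $p$. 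None of these follows from the assumptions; the only self-square information available is $\SSS\approx\SSS\oplus\SSS$, and that alone does not force $\SSS\oplus Q_\sigma(\SSS)\approx\SSS$ (equivalently, it does not force $Q_\sigma(\SSS)\approx\SSS$ for a general subsymmetric $\SSS$). Your phrase ``leveraging the self-square property of $\SSS$ transmitted through the splitting'' hides exactly the step that fails.

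The paper sidesteps this entirely by a small but decisive trick: it applies the DKK-method not to $\BB$ but to the basis $\VB\oplus\BB$, where $\VB=(\vv_n)_{n=1}^\infty$ is the normalized block basis of $P_\sigma(\SSS)$ from \eqref{BOSforAP}. Since $\VB$ is unconditional, Lemma~\ref{LemmaOne} still yields $L_m[\VB\oplus\BB]\gtrsim\delta(m)$, so nothing is lost on the conditionality side. But now Theorem~\ref{FirstProperties}(b) gives
\[
\YY[\VB\oplus\BB,\SSS,\sigma]\approx Q_\sigma(\SSS)\oplus\bigl(P_\sigma(\SSS)\oplus\XX\bigr)\approx\SSS\oplus\XX,
\]
and $\SSS\oplus\XX\approx\SSS\oplus\SSS\oplus Z\approx\SSS\oplus Z\approx\XX$ follows directly from Remark~\ref{SubSymSquare}, with no Pe{\l}czy\'nski argument needed. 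Augmenting $\BB$ by $\VB$ manufactures exactly the missing $P_\sigma(\SSS)$ summand that recombines with $Q_\sigma(\SSS)$ to give all of $\SSS$.
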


\begin{proof}For each $n\in\NN$ put $\sigma_n=[2^{n-1},2^n-1]$, and let $\vv_n$ be defined as in \eqref{BOSforAP}. Notice  that  $\sigma=(\sigma_n)_{n=1}^\infty$ verifies both \eqref{conditionA} and \eqref{conditionB} and that $\VB=(\vv_n)_{n=1}^\infty$ is an unconditional basis for $P_\sigma[\SSS]$.   Then,
by \cite{DKKT2003}*{Proposition 4.1}, Lemma~\ref{LemmaOne},  Theorem~\ref{QGTheorem} 
 and Proposition~\ref{ConditionalityConstantsRelationTwo},
the unit-vector system is an almost greedy basis as desired for $\YY:=\YY[\VB\oplus\BB,\SSS,\sigma]$. Let $\ZZ$ such that $\XX\approx \SSS\oplus\ZZ$.
The chain of isomorphisms
\[
\YY\approx  Q_\sigma(\SSS)\oplus P_\sigma(\SSS)\oplus\XX
\approx \SSS\oplus \SSS\oplus\ZZ
\approx \SSS\oplus\ZZ
\approx\XX
\]
(see Remark~\ref{SubSymSquare}) completes the proof.
\end{proof}

In order to effectively use Theorem~\ref{MainDKKTheorem} we need to ensure the existence of bases $\BB$  with large conditionality constants $(L_m[\BB])_{m=1}^{\infty}$ in subsymmetric sequence spaces. 
We start by recalling the following result by Garrig\'os and Wojtaszczyk, which, in our language, reads as follows.
\begin{theorem}[cf.\ \cite{GW2014}*{Proposition 3.10}]\label{GWHilbert} For each $0<a<1$ there is a  basis $\BB$ in $\ell_2$ with $L_m[\BB]\gtrsim m^a$ for $m\in\NN$.
\end{theorem}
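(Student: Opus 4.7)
My plan is to build $\BB$ by concatenating finite-dimensional ``conditional'' bases of $\ell_2^N$ for $N$ in a sparse, rapidly-growing sequence. For each such $N$ I look for an invertible operator $T_N \colon \ell_2^N \to \ell_2^N$ with uniformly-bounded condition number $\Vert T_N\Vert\,\Vert T_N^{-1}\Vert \le C$, together with a subset $A_N \subseteq \{1,\dots,N\}$, such that the conjugated coordinate projection satisfies
\[
\Vert T_N S_{A_N} T_N^{-1} \Vert_{\ell_2^N \to \ell_2^N} \ge c\, N^a.
\]
Setting $\BB_N := (T_N \ee_j)_{j=1}^N$, a direct dualization shows $S_A[\BB_N] = T_N S_A T_N^{-1}$ for every $A \subseteq \{1,\dots,N\}$, so the basis constant of $\BB_N$ is at most $C$ while $L_{|A_N|}[\BB_N] \ge c\, N^a$. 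A natural candidate for $T_N$ is a finite section of a Toeplitz operator whose entries are tuned to the exponent $a$ (a fractional-summation kernel of order $1-a$ is such a choice); the uniform boundedness of $T_N^{\pm 1}$ and the lower bound on the coordinate projection then reduce to classical estimates for Toeplitz matrices on $\ell_2$.

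Next I choose $N_n = 2^n$ and identify $\ell_2 \cong (\bigoplus_{n=1}^\infty \ell_2^{N_n})_2$. Let $\BB$ be the basis of $\ell_2$ obtained by listing first the vectors of $\BB_{N_1}$ placed in the first summand, then those of $\BB_{N_2}$ in the second summand, and so on. Orthogonality of the summands together with the uniform bound $C$ on the basis constants of the blocks yields that $\BB$ is a Schauder basis of $\ell_2$. To estimate $L_m[\BB]$ for $m\ge 2$, I pick $n$ maximal with $N_n \le m$ (so that $m < 2N_n$) and let $A$ be the image of $A_{N_n}$ inside the $n$-th block. Because coordinate projections respect the orthogonal decomposition,
\[
\Vert S_A[\BB, \ell_2]\Vert = \Vert T_{N_n} S_{A_{N_n}} T_{N_n}^{-1}\Vert_{\ell_2^{N_n}} \ge c\, N_n^a \ge 2^{-a} c\, m^a,
\]
and hence $L_m[\BB] \gtrsim m^a$.

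The main obstacle is the finite-dimensional step: producing a $T_N$ of uniformly controlled condition number whose associated basis nevertheless carries a coordinate projection of norm growing like $N^a$. These two demands pull in opposite directions (the better conditioned $T_N$ is, the less amplification one can expect from conjugation), and the delicate part is the simultaneous tuning of $T_N$ and $A_N$ to the target exponent $a \in (0,1)$; a concrete realization of this scheme is given in \cite{GW2014}*{Proposition 3.10}.
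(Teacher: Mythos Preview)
The paper does not give its own proof of this statement; it is simply quoted from \cite{GW2014}*{Proposition 3.10} and used as a black box. So there is nothing to compare your argument against except the original source you yourself cite.

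As an independent proof, your proposal has a genuine gap. The entire content of the theorem lies in the finite-dimensional step: producing, for each $N$, an invertible $T_N$ on $\ell_2^N$ with $\Vert T_N\Vert\,\Vert T_N^{-1}\Vert \le C$ uniformly in $N$, yet with $\Vert T_N S_{A_N} T_N^{-1}\Vert \ge c\,N^a$ for some $A_N$. You correctly isolate this as the crux, but then you do not carry it out; you gesture at ``fractional-summation Toeplitz matrices'' and ``classical estimates'' and finally refer back to \cite{GW2014}*{Proposition 3.10} --- which is precisely the result you are supposed to be proving. So the argument is circular: the gluing of the blocks is routine (and essentially the content of Lemma~\ref{LemmaTwo} in the paper), while the only nontrivial ingredient is outsourced to the reference that the theorem already cites.

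Two smaller points. First, what the block construction yields is $L_N[\BB_N]\ge c\,N^a$, not $L_{|A_N|}[\BB_N]$: in the definition of $L_m$ it is the support of the test vector $f$ that must lie in $[1,m]$, not the projection set $A$. Second, your indexing in the gluing is off by one block: if $n$ is maximal with $N_n=2^n\le m$, the $n$-th block occupies positions up to $M_n=2^{n+1}-2$, which may exceed $m$; you should instead use the $(n-1)$-th block (or, equivalently, invoke monotonicity of $L_m$ together with $M_{n-1}\le m$), which still gives $L_m[\BB]\gtrsim N_{n-1}^a\gtrsim m^a$.
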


\begin{proposition}\label{HCLp}
For each $0<a<1$ and each $1<q<\infty$ there is a  basis $\BB$ in $\ell_q$ with $L_m[\BB]\gtrsim m^a$ for $m\in\NN$.
\end{proposition}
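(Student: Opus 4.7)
The plan is to transplant the Hilbert space construction of Theorem~\ref{GWHilbert} to $\ell_q$ using the two isomorphisms \eqref{BesovIso} and \eqref{Peuchinski} together with the block-diagonal construction of Lemma~\ref{LemmaTwo}. I would start by invoking Theorem~\ref{GWHilbert} to fix a basis $\BB=(\xx_j)_{j=1}^\infty$ of $\ell_2$ with $L_m[\BB]\gtrsim m^a$, and note that $\delta(t)=t^a$ is non-decreasing and doubling, which is precisely what Lemma~\ref{LemmaTwo} asks for.

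Next, I would set $N_n=2^n$, so that $M_r:=\sum_{n=1}^r N_n = 2^{r+1}-2\le 2 N_{r+1}$, verifying the hypothesis of Lemma~\ref{LemmaTwo}. Applying Lemma~\ref{LemmaTwo}(a) with exponent $q\in(1,\infty)$ produces a basis $\BB_0=\bigoplus_{n=1}^\infty(\xx_j)_{j=1}^{N_n}$ for the space $\XX:=(\bigoplus_{n=1}^\infty\ell_2^{(N_n)}[\BB])_q$ with $L_m[\BB_0]\gtrsim m^a$. Because each $\ell_2^{(N)}[\BB]$ is literally an $N$-dimensional subspace of $\ell_2$, it is isometric to $\ell_2^N$, so $\XX$ coincides isometrically with $(\bigoplus_{n=1}^\infty\ell_2^{N_n})_q$. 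Now \eqref{BesovIso} with $p=2$ and $d_n=2^n$ identifies the latter with $B_{2,q}$, and \eqref{Peuchinski} identifies $B_{2,q}$ with $\ell_q$.

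To finish I would push $\BB_0$ forward under the composition of these isomorphisms. Since for any isomorphism $T\colon\XX\to\YY$ one has $S_A[T(\BB_0)]=T\circ S_A[\BB_0]\circ T^{-1}$, the quantity $L_m$ is preserved up to the multiplicative constant $\Vert T\Vert\,\Vert T^{-1}\Vert$, and we obtain a basis of $\ell_q$ with $L_m\gtrsim m^a$. There is no real obstacle here: every ingredient is quoted from earlier in the excerpt. The one point that would have been a concern in a more general setting---namely, whether the finite-dimensional blocks $\XX^{(N_n)}[\BB]$ are uniformly isomorphic to a model space---is trivial in the Hilbert case because finite-dimensional subspaces of $\ell_2$ are automatically isometric to $\ell_2^N$.
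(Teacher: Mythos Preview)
Your proof is correct and follows essentially the same route as the paper: invoke Theorem~\ref{GWHilbert}, apply Lemma~\ref{LemmaTwo} with $N_n=2^n$ in the $\ell_q$-sum, and then identify the resulting space with $\ell_q$ via \eqref{BesovIso} and \eqref{Peuchinski}, using that finite-dimensional subspaces of $\ell_2$ are isometric to $\ell_2^N$. The only differences are cosmetic (you swap the names of $\BB$ and $\BB_0$ and spell out the verification of the doubling and growth hypotheses and the invariance of $L_m$ under isomorphism).
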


\begin{proof} Apply Theorem~\ref{GWHilbert} for picking a basis $\BB_{0}=(\xx_j)_{j=1}^\infty$ for $\ell_2$ with $L_m[\BB_{0}]\gtrsim m^a$ for $m\in\NN$. By Lemma~\ref{LemmaTwo},
$\BB=\bigoplus_{n=1}^\infty  (\xx_j)_{j=1}^{2^n}$ is a  basis for $\XX=(\bigoplus \ell_2^{(2^n)}[\BB_{0}])_p$ with $L_m[\BB]\gtrsim m^a$. Since any $N$-dimensional Hilbert space is isometric to $\ell_2^N$, the isomorphisms \eqref{Peuchinski} and \eqref{BesovIso} yield $\XX\approx (\bigoplus \ell_2^{2^n})_q\approx B_{2,q}\approx\ell_q$.
\end{proof}

Our next result improves Corollary 3.13 from \cite{GW2014}, where it is shown the existence of quasi-greedy bases as conditional as possible in $\ell_q$, $1<q<\infty$. The main improvement consists of building, for $q\not=2$, almost greedy bases instead of quasi-greedy ones.

\begin{theorem}[cf. \cite{GW2014}*{Theorem 1.2, Corollary 3.12 and Corollary 3.13}]\label{T30}
Let $\XX$ be a Banach space with a basis and $1<q<\infty$. If
$\ell_q\lesssim_c \XX$ then for any  $0<a<1$ the space $\XX$ has an almost greedy basis $\BB_{\kappa}$ with fundamental function  equivalent to $(m^{1/q})_{m=1}^\infty$ and  with $L_m[\BB_{\kappa}]\gtrsim(\log m)^a$ for $m\in\NN$.
\end{theorem}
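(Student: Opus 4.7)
The plan is to reduce the statement to an application of Theorem~\ref{MainDKKTheorem} with $\SSS=\ell_q$ and doubling weight $\delta(m)=m^a$. Observe that $\ell_q$ is a subsymmetric sequence space with nontrivial type (since $1<q<\infty$), that $\delta(m)=m^a$ is non-decreasing and doubling ($\delta(2m)=2^a\delta(m)$) with $\delta(\log m)=(\log m)^a$, and that $\|\sum_{j=1}^n\ee_j\|_{\ell_q}=n^{1/q}$. Thus, \emph{once its hypotheses are verified}, Theorem~\ref{MainDKKTheorem} delivers an almost greedy basis of $\XX$ with exactly the fundamental function and conditionality estimate claimed.

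The only ingredient that does not come for free is a basis $\BB$ \emph{of $\XX$ itself} satisfying $L_m[\BB]\gtrsim m^a$; the given basis of $\XX$ need not have this property. To manufacture one, I would first invoke Proposition~\ref{HCLp} to pick a basis $\BB_0$ of $\ell_q$ with $L_m[\BB_0]\gtrsim m^a$, and denote by $\BB_1$ the basis of $\XX$ provided in the hypothesis. By Lemma~\ref{LemmaOne}, the direct sum $\BB_0\oplus\BB_1$ is a basis of $\ell_q\oplus\XX$ with $L_m[\BB_0\oplus\BB_1]\gtrsim m^a$.

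Next, I would transfer this basis to $\XX$ using Pe\l czy\'nski's absorption trick. Since $\ell_q\lesssim_c\XX$, write $\XX\approx\ell_q\oplus\ZZ$ for some Banach space $\ZZ$; combining this with $\ell_q\oplus\ell_q\approx\ell_q$ gives
\[
\ell_q\oplus\XX\approx\ell_q\oplus\ell_q\oplus\ZZ\approx\ell_q\oplus\ZZ\approx\XX.
\]
Transporting $\BB_0\oplus\BB_1$ through this isomorphism yields a basis $\BB$ of $\XX$ with $L_m[\BB]\gtrsim m^a$, since the quantity $L_m$ is preserved under isomorphism up to a multiplicative constant depending only on the Banach–Mazur distance.

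With the basis $\BB$ now in hand, the final step is to apply Theorem~\ref{MainDKKTheorem} to $\XX$ with $\SSS=\ell_q$, the basis $\BB$, and $\delta(m)=m^a$; the conclusion is the sought-after almost greedy basis $\BB_\kappa$ of $\XX$ with fundamental function equivalent to $(n^{1/q})_{n=1}^\infty$ and with $L_m[\BB_\kappa]\gtrsim(\log m)^a$. The argument is essentially mechanical once the assembly is identified: the only non-routine move is the Pe\l czy\'nski absorption step that allows us to upgrade the given basis of $\XX$ to one with large conditionality constants, and even this is a short standard computation.
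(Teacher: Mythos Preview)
Your proof is correct and follows essentially the same route as the paper: combine Proposition~\ref{HCLp} with Lemma~\ref{LemmaOne} to obtain a basis of $\ell_q\oplus\XX$ with $L_m\gtrsim m^a$, use the absorption $\ell_q\oplus\XX\approx\XX$, and apply Theorem~\ref{MainDKKTheorem} with $\SSS=\ell_q$. The only cosmetic difference is that the paper applies Theorem~\ref{MainDKKTheorem} to $\ell_q\oplus\XX$ and then transfers via the isomorphism, whereas you transfer first and then apply the theorem on $\XX$; both orderings work.
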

\begin{proof} By Lemma~\ref{LemmaOne} and Proposition~\ref{HCLp},  $\ell_p\oplus\XX$
has a basis $\BB$ with $L_m[\BB] \gtrsim m^a$ for $m\in\NN$. Then, Theorem~\ref{MainDKKTheorem}
gives a basis as desired for $\ell_q\oplus\XX$. Finally, since $\ell_q\oplus\ell_q\approx\ell_q$, we infer that
$\ell_q\oplus\XX\approx \XX$.
\end{proof}

\begin{example}\label{Ex30}\
\begin{itemize}
\item[(i)] 
Theorem~\ref{T30} applies to $L_q$ and to $\ell_q$ for $1<q<\infty$. 
More generally,  Theorem~\ref{T30} yields that if $q\in(1,\infty)$
  any separable $\LL_q$-space $\XX$ has  almost greedy bases as conditional as possible whose fundamental function is equivalent to $(m^{1/q})_{m=1}^\infty$ (see \cite{LinPel1968}*{Proposition 7.3} and  \cite{JRZ1971}*{Theorem 5.1}). Moreover, if $\XX$
   is not isomorphic to $\ell_q$ it also has   almost greedy bases as conditional as possible whose fundamental function is equivalent to $(m^{1/2})_{m=1}^\infty$ (see \cite{JohnsonOdell1974}*{Corollary 1} and \cite{KP1962}*{Corollary 1}).
   
 \item[(ii)] Let $1<p,q<\infty$.
 Theorem~\ref{T30} applies to the superreflexive spaces  $Z_{p,q}$, $B_{p,q}$ and $D_{p,q}$. 
 
 \item[(iii)]  Theorem~\ref{T30} also applies to  
Lorentz sequence spaces. In fact, appealing also to
 Theorem~\ref{MainDKKTheorem}  we claim that, given $\ww=(w_n)_{n=1}^\infty$ non-increasing,  $1<q<\infty$, and $\epsilon>0$, the space
  $d_q(\ww)$ has almost greedy bases $\BB_{\kappa}$ with $L_{m}[\BB_{\kappa}]\gtrsim (\log m)^{1-\epsilon}$ and with fundamental function equivalent either to
 $(m^{1/q})_{m=1}^\infty$ or to $(W_m^{1/q})_{n=1}^\infty$, where
$W_m=\sum_{n=1}^m w_n$. In the case when $(W_m)_{m=1}^\infty$ has the LRP
$d_q(\ww)$ is superreflexive (see  \cite{Al75}*{Theorem 1}) and, then, this is sharp.  In particular, the classical Lorentz sequence spaces $\ell_{p,q}$  for $1< p,q<\infty$ have almost greedy bases as conditional as possible with fundamental function   equivalent either to  $(m^{1/q})_{m=1}^\infty$ or  to $(m^{1/p})_{m=1}^\infty$.
 \end{itemize}
\end{example}

Before returning to our main theme of almost greedy bases for concrete spaces let us  present a more abstract application of Theorem~\ref{MainDKKTheorem}.
\begin{theorem}  \label{subsymmbasicsequence} Let $\SSS$ be a subsymmetric sequence space with nontrivial type. Then, for every $0 < a < 1$,  $\SSS$ contains an almost greedy basic sequence
$\BB_{\kappa}=(\xx_n)_{n=1}^\infty$ such that $L_m[\BB_{\kappa}]\gtrsim(\log m)^a$ for $m\in\NN$.
\end{theorem}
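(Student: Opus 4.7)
The plan is to apply Theorem~\ref{MainDKKTheorem} to a carefully chosen auxiliary Banach space $\XX$ which embeds isomorphically into $\SSS$ and contains $\SSS$ as a complemented subspace, equipped with a basis whose conditionality grows polynomially. Once Theorem~\ref{MainDKKTheorem} is applied with exponent $\delta(m)=m^a$, the almost greedy basis it produces for $\XX$ inherits the logarithmic bound $L_m\gtrsim(\log m)^a$, and can be read off as a basic sequence in $\SSS$.

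The first task is to manufacture a highly conditional basis that lives inside $\SSS$. The starting point is the Garrig\'os--Wojtaszczyk basis $\BB'=(\yy_j)_{j=1}^\infty$ of $\ell_2$ from Theorem~\ref{GWHilbert}, which satisfies $L_m[\BB']\gtrsim m^a$. Setting $N_r=2^r$, so that $\sum_{k=1}^rN_k\le N_{r+1}$, I would use Dvoretzky's theorem to obtain, for each $r$, an isomorphism $T_r\colon \ell_2^{N_r}\to\SSS$ with $\|T_r\|\,\|T_r^{-1}\|\le 2$ and finite-dimensional range. The subsymmetry of the unit vector basis of $\SSS$ then lets me translate these images along increasing permutations, arranging the subspaces $V_r:=T_r(\ell_2^{N_r})$ to have pairwise disjoint supports. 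The concatenation of the pushforwards $(T_r(\yy_j))_{j=1}^{N_r}$, $r=1,2,\dots$, is a basic sequence $\BB_0$ in $\SSS$ whose closed span $\XX_0$ admits $\BB_0$ as a basis (disjointness plus the unconditionality of the subsymmetric envelope make the basis constants uniform).

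The conditionality estimate for $\BB_0$ would be obtained by mimicking Lemma~\ref{LemmaTwo}. For $m\ge N_1$ one picks $r$ with $M_r:=\sum_{k=1}^{r}N_k\le m<M_{r+1}$ and restricts attention to vectors supported in a single block; combined with the uniform distortion of the $T_r$, this gives
\[
L_m[\BB_0,\XX_0]\gtrsim \max\{L_{N_r}[\BB'],\,L_{m-M_r}[\BB']\}\gtrsim \max\{N_r^a,(m-M_r)^a\},
\]
which, thanks to the doubling of $(N_r)$, is $\gtrsim m^a$. Setting $\XX:=\SSS\oplus\XX_0$, trivially $\SSS\lesssim_c\XX$, and since $\XX_0\hookrightarrow\SSS$ and $\SSS\oplus\SSS\approx\SSS$ by Remark~\ref{SubSymSquare}, one also gets $\XX\hookrightarrow\SSS$. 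Lemma~\ref{LemmaOne} shows that the direct sum of the unit vector basis of $\SSS$ with $\BB_0$ is a basis of $\XX$ still satisfying $L_m\gtrsim m^a$. Applying Theorem~\ref{MainDKKTheorem} with $\delta(m)=m^a$ then delivers an almost greedy basis $\BB_\kappa$ of $\XX$ with fundamental function equivalent to $\bigl(\bigl\|\sum_{j=1}^n\ee_j\bigr\|_\SSS\bigr)_{n=1}^\infty$ and $L_m[\BB_\kappa]\gtrsim(\log m)^a$; viewing $\XX$ as a subspace of $\SSS$ makes $\BB_\kappa$ the desired almost greedy basic sequence in $\SSS$.

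The main obstacle is the construction of the intermediate basis $\BB_0$: it is not enough to borrow the $\ell_2$-conditional basis from Theorem~\ref{GWHilbert}; one must verify that transporting it into $\SSS$ via Dvoretzky-type embeddings, together with the disjoint-support glueing permitted by subsymmetry, preserves the estimate $L_m\gtrsim m^a$. This amounts to a careful extension of Lemma~\ref{LemmaTwo} from $\ell_p$-sums to disjointly supported blocks inside a subsymmetric ambient space, and is the step where both the nontrivial type hypothesis (which ensures that Theorem~\ref{MainDKKTheorem} can be invoked in the last step) and the Dvoretzky freedom to import $\ell_2^n$ into $\SSS$ are exploited simultaneously.
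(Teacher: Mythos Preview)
Your proposal is correct and follows essentially the same approach as the paper: both import the Garrig\'os--Wojtaszczyk $\ell_2$-basis into $\SSS$ via Dvoretzky's theorem to obtain a basic sequence with $L_m\gtrsim m^a$, then apply the DKK machinery and use $\SSS\oplus\SSS\approx\SSS$ to land back inside $\SSS$. The only cosmetic difference is that the paper works directly with $\YY[\BB,\SSS,\sigma]\approx Q_\sigma(\SSS)\oplus\XX_0\subseteq\SSS\oplus\SSS$, whereas you first enlarge to $\XX=\SSS\oplus\XX_0$ so that the hypothesis $\SSS\lesssim_c\XX$ of Theorem~\ref{MainDKKTheorem} is met verbatim; both routes yield the same embedding.
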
 \begin{proof}  Combining the Mazur construction of basic sequences (see e.g., \cite{AAW2017}*{Proposition 3.1}) with Dvoretzky's theorem on
 the finite representability of $\ell_2$ in all infinite-dimensional Banach spaces \cite{D1960} it follows that  every infinite-dimensional Banach space contains a basic sequence $\BB = (\xx_j)_{j=1}^\infty$
such that $[\xx_j]_{j=2^n}^{j= 2^{n+1}-1}$ is $2$-isomorphic to $\ell_2^{2^{n}}$  for all $n \ge 1$.   Moreover, Theorem~\ref{GWHilbert}
allows us to select the basis vectors $[\xx_j]_{j=2^n}^{j= 2^{n+1}-1}$ in such a way that we may ensure that  $L_m[\BB]\gtrsim m^a$ for $m\in\NN$.
Choose such a basic sequence $\BB$ inside $\SSS$ and let $\XX$ be its closed linear span. Setting $\sigma=([2^{n-1},2^n-1])_{n=1}^\infty$,
by Theorem~\ref{MainDKKTheorem} the unit-vector basis $\EE$    is an almost greedy basis   
 for $\YY= \YY[\BB,\SSS,\sigma]$ satisfying $L_m[\EE, \YY]\gtrsim(\log m)^a$ for $m\in\NN$. Finally, by Theorem~\ref{FirstProperties}~(c)
 and Remark~\ref{SubSymSquare},
$$ \YY[\BB,\SSS,\sigma] \approx Q_\sigma(\SSS)\oplus \XX\subseteq \SSS \oplus \SSS \approx \SSS.$$
\end{proof}
\begin{remark} It follows from Theorem~\ref{subsymmbasicsequence} that if $\XX$ has nontrivial type then every spreading model for $\XX$ generated by a weakly null sequence contains a basic sequence satisfying the conclusion of Theorem~\ref{subsymmbasicsequence}. This follows because spreading models of $\XX$ are finitely represented in $\XX$ (and hence have nontrivial  type) and those generated by weakly null sequences are subsymmetric.
\end{remark}

Example~\ref{Ex30} shows that  Theorem~\ref{MainDKKTheorem} is strong enough for a  wide class of superreflexive Banach spaces. When dealing with non-superreflexive Banach spaces we need to combine the DKK-method with other techniques.
To that end, we recover some ideas from \cite{AAW2017}.
Recall that a basis $(\xx_j)_{j=1}^\infty$ is said  to be of \textit{type $P$} if   
\[\sup_k \left\Vert \sum_{j=1}^k \xx_j \right\Vert <\infty\quad \text{and}\quad  \inf_j \Vert \xx_j\Vert>0.\] 
Notice that both the unit-vector system in $c_0$ and the \textit{difference system}
$\DD=(\dd_j)_{j=1}^\infty$  in $\ell_1$, defined by
\[
\dd_j=\ee_j -\ee_{j-1}  \text{ (with the convention $\ee_0=0$)},
\]
 are bases of type P.
 It is known that the   \textit{summing system} $\SSB=(\sss_j)_{j=1}^\infty$, given by
\[
\quad \sss_j=\sum_{k=1}^j \ee_j.
\]
 is a conditional basis for $c_0$. Most proofs  of this fact 
(see, e.g., \cite{AlbiacKalton2016}*{Example 3.1.2}) give the following.
\begin{lemma}\label{coLemma}\label{Summingc0} The summing system $\SSB$ is a basis  for $c_0$ with  $L_m[\SSB,c_0]\approx m$ for $m\in\NN$, and $c_0^{(N)}[\SSB]=\ell_\infty^N$ for all $N\in\NN$.
\end{lemma}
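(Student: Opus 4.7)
My plan is to address the three assertions in reverse order of difficulty: first identify $c_0^{(N)}[\SSB]$ with $\ell_\infty^N$, then check that $\SSB$ is a basis, and finally sandwich $L_m[\SSB,c_0]$ between constants times $m$. The identification and the basis property are classical and go through by direct inspection; the only point that requires an honest computation is the two-sided estimate $L_m[\SSB,c_0]\approx m$.

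For the identification, I would observe that a generic element of $\langle \sss_j\colon 1\le j\le N\rangle$ is $\sum_{j=1}^N a_j\sss_j=\sum_{k=1}^N c_k\ee_k$ with $c_k=\sum_{j=k}^N a_j$, and the assignment $(a_j)_{j=1}^N\mapsto (c_k)_{k=1}^N$ is a linear bijection that transports the $c_0$-norm to the $\ell_\infty^N$-norm. In particular, every finitely supported expansion in $\SSB$ has $c_0$-norm equal to $\max_k|c_k|$. For the basis property, the natural biorthogonal functionals are $\sss_j^*(x)=x_j-x_{j+1}$ for $x=(x_k)_{k=1}^\infty\in c_0$; the partial-sum projection satisfies
\[
S_n[\SSB,c_0](x)=\sum_{k=1}^n x_k\ee_k - x_{n+1}\sss_n,
\]
which converges to $x$ in $c_0$ since $x_{n+1}\to 0$, and is bounded in norm by $2\Vert x\Vert_{c_0}$. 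Standard reasoning then gives that $\SSB$ is a basis of $c_0$.

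The lower estimate $L_m[\SSB,c_0]\gtrsim m$ I would obtain from an explicit alternating witness. Take $f=\sum_{j=1}^m(-1)^{j-1}\sss_j$; the associated coefficients $c_k$ alternate between $0$ and $1$, so $\Vert f\Vert_{c_0}=1$. Letting $A$ be the set of odd positive integers, $S_A(f)=\sum_{j\text{ odd},\,j\le m}\sss_j$ has first coordinate $\lceil m/2\rceil$, providing the desired lower bound. The part of the argument that will need the most care is the upper estimate $L_m[\SSB,c_0]\lesssim m$: for $f=\sum_{j=1}^m a_j\sss_j$ and any $A\subseteq\NN$, the $k$-th coordinate of $S_A(f)$ equals the telescope
\[
\sum_{j\in A\cap[k,m]}(c_j-c_{j+1}),
\]
a sum of at most $m$ terms each of absolute value at most $2\Vert f\Vert_{c_0}$ (using $c_{m+1}=0$). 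This forces $\Vert S_A(f)\Vert_{c_0}\le 2m\Vert f\Vert_{c_0}$, completing the proof. The main obstacle, if any, is resisting the temptation to bound the telescope by a coarser unconditional-type estimate; the crucial point is that the number of summands is controlled by $m$ rather than by $|A|$, which is why one gets exactly linear (rather than worse) growth.
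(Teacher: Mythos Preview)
Your proof is correct. The paper does not supply its own argument for this lemma; it simply records the statement and points to \cite{AlbiacKalton2016}*{Example 3.1.2}, noting that ``most proofs of this fact'' already yield the precise conclusions listed. Your direct verification---the identification $c_0^{(N)}[\SSB]=\ell_\infty^N$ via the change of variables $c_k=\sum_{j=k}^N a_j$, the explicit partial-sum formula giving basis constant $2$, the alternating witness for the lower bound, and the telescoping estimate for the upper bound---is exactly the kind of elementary computation the reference contains, so there is nothing to contrast.

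One minor remark on your closing comment: since $a_j=0$ for $j>m$, one may assume $A\subseteq[1,m]$ without loss, so $|A|\le m$ automatically; the distinction you draw between ``controlled by $m$'' and ``controlled by $|A|$'' is immaterial here, though it does not affect the argument.
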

By duality, the difference system is a conditional basis for $\ell_1$. Indeed, we have the following.
\begin{lemma}\label{l1Lemma} The difference system $\DD$ is a basis of $\ell_1$ with $L_m[\DD,\ell_1]\approx m$ for $m\in\NN$, and  $\ell_1^{(N)}[\DD]=\ell_1^N$ for all $N\in\NN$.
\end{lemma}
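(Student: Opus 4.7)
The plan is to mirror Lemma~\ref{coLemma} with all estimates carried out directly in $\ell_1$. First I would verify that $\DD$ is a (bimonotone) Schauder basis: the telescoping identity $\ee_N=\sum_{j=1}^N \dd_j$ shows that for $f=\sum_{k=1}^\infty a_k\ee_k\in\ell_1$ one has
\[
S_N[\DD,\ell_1](f)=\sum_{k=1}^{N-1} a_k\ee_k+\Big(\sum_{j\ge N} a_j\Big)\ee_N,
\]
whose $\ell_1$-norm is at most $\sum_{k=1}^{N-1}|a_k|+\sum_{j\ge N}|a_j|=\|f\|_1$. The convergence $S_N[\DD,\ell_1](f)\to f$ is clear on $c_{00}$ and extends by density to all of $\ell_1$, so $\DD$ is a monotone basis of $\ell_1$.

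For the lower bound $L_m[\DD,\ell_1]\gtrsim m$, I would use exactly the type of example that exhibits the conditionality of the summing basis in $c_0$. Put $f=\ee_m$; then $f=\sum_{j=1}^m \dd_j$, so the $\DD$-support of $f$ equals $\{1,\dots,m\}$ and $\|f\|_1=1$. Taking $A$ to be the odd integers in $[1,m]$, one computes
\[
S_A[\DD,\ell_1](f)=\sum_{j\in A}(\ee_j-\ee_{j-1}),
\]
a sum of $|A|$ disjointly supported signed unit vectors, whose $\ell_1$-norm is therefore at least $m-1$.

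For the matching upper bound, given $f=\sum_{j=1}^m c_j\dd_j$ and $A\subseteq\{1,\dots,m\}$ I would estimate
\[
\|S_A[\DD,\ell_1](f)\|_1\le \sum_{j\in A}|c_j|\,\|\dd_j\|_1\le 2\sum_{j=1}^m |c_j|.
\]
The coefficients $c_j=\dd_j^*(f)=\sum_{k\ge j}\ee_k^*(f)$ satisfy $|c_j|\le \|f\|_1$, hence $\|S_A[\DD,\ell_1](f)\|_1\le 2m\|f\|_1$, so $L_m[\DD,\ell_1]\lesssim m$. Finally, the identity $\ell_1^{(N)}[\DD]=\ell_1^N$ is immediate: elementary row operations identify $[\dd_1,\dots,\dd_N]$ with $[\ee_1,\dots,\ee_N]$ as subspaces of $\ell_1$, and both carry the restricted $\ell_1$-norm. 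No step is a serious obstacle; the only real content is the observation that, just as for the summing basis in $c_0$, the cancellation in partial telescoping expansions is what forces $(L_m)$ to grow linearly.
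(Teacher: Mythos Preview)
Your argument is correct. A couple of cosmetic points: you call the basis ``bimonotone'' but only verify monotonicity (which is all you need); and in the lower bound the summands $\ee_j-\ee_{j-1}$ are not ``signed unit vectors'' but disjointly supported vectors of $\ell_1$-norm $2$ (except $\dd_1$, of norm $1$), so the norm of $S_A(f)$ is $2|A|-1\ge m-1$ as you state. Neither point affects the proof.

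The paper does not give an explicit proof of this lemma: it merely remarks ``By duality, the difference system is a conditional basis for $\ell_1$'' and leaves the statement to be read off from Lemma~\ref{coLemma} about the summing basis of $c_0$ (the two bases being biorthogonal, so the projections $S_A$ are adjoints of one another). Your approach is a direct computation in $\ell_1$ that avoids the duality step altogether. The advantage of the duality remark is economy; the advantage of your route is that it is self-contained and yields the constants $L_m\in[m-1,2m]$ explicitly without having to check that the $L_m$-constants survive passage to the dual basis.
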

Lemmas~\ref{coLemma} and \ref{l1Lemma} exhibit the fact that $c_0$ and $\ell_1$ have bases as conditional as possible.
 The following lemma shows  that Banach spaces with a  basis of type P follow the pattern  of  $c_0$ and $\ell_1$.
 
  \begin{lemma}[see \cite{AAW2017}]\label{AnsoLemma}Let $\BB_{0}$ be a basis of type $P$ of a Banach space $\XX$. Then there is a basis
$\BB$ for $\XX$ such that $L_m[\BB]\approx m$ for $m\in\NN$ and 
$\XX^{(2^n-2)}[\BB]=\XX^{(2^n-2)}[\BB_{0}]$ for all $n\ge 2$.
\end{lemma}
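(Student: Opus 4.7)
Partition $\NN$ into blocks $B_n = [2^n-1,\,2^{n+1}-2]$ (so $|B_n| = 2^n$ and $\max B_n = 2^{n+1}-2$), and let $T_k = \sum_{j\le k}\xx_j$, so by type $P$, $M := \sup_k \|T_k\| < \infty$.

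I would construct $\BB = (\yy_j)$ by a block-local summing transform: within each block $B_n = [a, b]$, set
\[
\yy_j \;=\; T_j - T_{a-1} \;=\; \xx_a + \xx_{a+1} + \cdots + \xx_j, \qquad j\in B_n,
\]
so that $\|\yy_j\|\le 2M$. The change of basis is block-diagonal and invertible (lower-triangular with unit diagonal in each block), which immediately gives the span condition $\XX^{(2^{n+1}-2)}[\BB] = \XX^{(2^{n+1}-2)}[\BB_0]$ for $n \ge 1$. To verify $\BB$ is a Schauder basis, expand $f = \sum c_j \xx_j$: the $\BB$-coefficients are $d_j = c_j - c_{j+1}$ for $j$ strictly inside a block and $d_b = c_b$ at block ends, and a telescoping calculation yields
\[
S_N^{\BB}(f) \;=\; S_N^{\BB_0}(f) \;-\; c_{N+1}\,(T_N - T_{a-1})
\]
for $N\in[a, b)$, with equality at block ends. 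Since $\|T_N - T_{a-1}\|\le 2M$ and $c_{N+1}\to 0$ (the coefficients of a convergent basis expansion in a seminormalized basis), the partial sum projections $S_N^{\BB}$ are uniformly bounded and $S_N^\BB(f)\to f$. The upper bound $L_m[\BB]\lesssim m$ is immediate from the uniform boundedness of the coordinate projections.

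The main obstacle is the lower bound $L_m[\BB]\gtrsim m$. For each $n$ I would construct a witness $f_n\in\XX$ with $\supp_\BB(f_n)\subseteq B_n = [a, b]$ and a set $A_n\subseteq B_n$ showing $\|S_{A_n}(f_n)\|\gtrsim 2^n\|f_n\|$. The natural candidate is $f_n = \sum_{k=0}^{|B_n|-1}(-1)^k\,\yy_{a+k}$, which telescopes in $\xx$-coordinates to $-\sum_{j \text{ odd}}\xx_{a+j}$; projecting onto the even-offset positions $A_n = \{a + 2k : 0\le k\le 2^{n-1} - 1\}$ gives $S_{A_n}(f_n) = \sum_{k \text{ even}}\yy_{a+k}$, whose $\xx_a$-coefficient is exactly $2^{n-1}$, yielding $\|S_{A_n}(f_n)\|\ge 2^{n-1}/\sup_j\|\xx_j^*\|$.

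The hard part is controlling $\|f_n\|$ uniformly in $n$; the argument in \cite{AAW2017} exploits the type $P$ hypothesis to rewrite $f_n$ as a bounded combination of block partial sums $T_{a+r} - T_{a+r'-1}$, each of norm at most $2M$, yielding $\|f_n\|\le C$ independent of $n$. Combined with $\max\supp_\BB f_n\le 2^{n+1}-2$, this shows $L_{2^{n+1}-2}[\BB]\gtrsim 2^{n-1}$, and a routine monotonicity argument then extends to $L_m[\BB]\gtrsim m$ for all $m$.
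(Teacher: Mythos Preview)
Your Schauder-basis verification and the span condition are fine, but the lower bound argument has a genuine gap: the block-wise summing transform you propose does \emph{not} produce $L_m[\BB]\approx m$ for an arbitrary type~$P$ basis.

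Take the difference basis $\DD=(\dd_j)$ of $\ell_1$, which is of type~$P$ with $T_k=\ee_k$. Your construction gives $\yy_j=\ee_j-\ee_{a-1}$ for $j$ in a block $[a,b]$. For any $g=\sum_{j=a}^{b}c_j\yy_j$ supported in that block and any $A\subseteq[a,b]$,
\[
\|S_A(g)\|_1=\sum_{j\in A}|c_j|+\Bigl|\sum_{j\in A}c_j\Bigr|\le 2\sum_{j=a}^{b}|c_j|\le 2\|g\|_1,
\]
so the within-block conditionality is uniformly bounded by~$2$. Cross-block effects involve only the $O(\log m)$ endpoints $b_n$, and a direct check shows they contribute at most logarithmic growth. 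Thus $L_m[\BB]\lesssim \log m$ for this example, far from $L_m\approx m$.

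Correspondingly, your witness fails: with $\BB_0=\DD$ one computes $\|f_n\|_1=2^n$ while $\|S_{A_n}(f_n)\|_1=2^n$, so the ratio is~$1$. The sentence ``rewrite $f_n$ as a bounded combination of block partial sums $T_{a+r}-T_{a+r'-1}$ \dots\ yielding $\|f_n\|\le C$'' is exactly where the argument breaks: $f_n=-\sum_{i}(T_{a+2i+1}-T_{a+2i})$ is a sum of $2^{n-1}$ such differences, not a bounded number, and no shorter representation exists in general.

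The construction in \cite{AAW2017} is not a summing transform of $\BB_0$; rather, it uses the boundedness of $(T_k)$ to insert, block by block, copies of an explicit highly conditional finite basis (modelled on the summing basis of $\ell_\infty^N$) built from the vectors $T_k$ themselves. That is what forces $L_m\gtrsim m$ regardless of the ambient space.
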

\begin{proof} The proof of Theorem 3.3 from  \cite{AAW2017} gives the result, although is not  explicitly stated there.
\end{proof}

\begin{theorem}\label{T5} Suppose $\XX$ is a Banach space with a basis of type P and let $(\SSS,\Vert \cdot \Vert_\SSS)$ be a subsymmetric Banach space. Assume that $\SSS\lesssim_c \XX$ and that $\SSS$ has nontrivial type.
Then $\XX$ has an almost greedy basis $\BB_{\kappa}$  whose fundamental function is equivalent to $(\Vert\sum_{j=1}^m \ee_j\Vert_\SSS)_{m=1}^{\infty}$
 and such that $L_m[\BB_{\kappa}]\approx \log m$ for $m\ge 2$.\end{theorem}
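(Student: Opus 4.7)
The plan is to chain together the construction of a maximally conditional basis from Lemma~\ref{AnsoLemma} with the DKK-machinery packaged in Theorem~\ref{MainDKKTheorem}, and then to match the resulting lower bound against the universal upper bound supplied by Theorem~\ref{CharacterizationSR}(a). There is essentially no ad-hoc work: every piece has been prepared in the preceding sections, and the role of the two standing hypotheses (type $P$ basis and $\SSS\lesssim_c\XX$ with $\SSS$ of nontrivial type) is precisely to feed the two inputs of Theorem~\ref{MainDKKTheorem}.

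Step one is to produce the raw material. Because $\XX$ admits a basis of type $P$, Lemma~\ref{AnsoLemma} directly provides a basis $\BB$ of $\XX$ such that $L_m[\BB]\approx m$ for every $m\in\NN$; this is the ``as conditional as possible'' basis in the $L_m$-scale that the DKK-method will compress logarithmically.

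Step two is to apply Theorem~\ref{MainDKKTheorem} to this $\BB$ with the doubling non-decreasing function $\delta(t)=t$. Its hypotheses are all met: $\SSS\lesssim_c\XX$ and $\SSS$ has nontrivial type by assumption, and $L_m[\BB]\gtrsim\delta(m)$ by Step one. The conclusion is the existence of an almost greedy basis $\BB_\kappa$ for $\XX$ whose fundamental function is equivalent to $(\Vert\sum_{j=1}^m\ee_j\Vert_\SSS)_{m=1}^\infty$ and which satisfies
\[
L_m[\BB_\kappa]\gtrsim \delta(\log m)=\log m,\qquad m\ge 2.
\]

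Step three is the matching upper estimate. Since $\BB_\kappa$ is almost greedy it is in particular quasi-greedy, so Theorem~\ref{CharacterizationSR}(a) yields $k_m[\BB_\kappa]\lesssim\log m$ for $m\ge 2$. Because $L_m[\BB_\kappa]\le k_m[\BB_\kappa]$ always holds, combining with Step two gives $L_m[\BB_\kappa]\approx\log m$, as required. The only conceivable obstacle would have been to verify an upper bound of the form $L_m[\BB_\kappa]\lesssim\log m$ by hand from the DKK-construction, but Theorem~\ref{CharacterizationSR}(a) makes this automatic, so the proof reduces to a one-shot application of the two results mentioned above.
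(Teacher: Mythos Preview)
Your proof is correct and follows exactly the approach of the paper, which simply says ``combine Lemma~\ref{AnsoLemma} with Theorem~\ref{MainDKKTheorem}.'' You have in fact been slightly more careful than the paper by explicitly invoking Theorem~\ref{CharacterizationSR}(a) to justify the upper estimate $L_m[\BB_\kappa]\lesssim k_m[\BB_\kappa]\lesssim\log m$, which the paper leaves implicit in the claimed two-sided equivalence.
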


\begin{proof} Just combine  Lemma~\ref{AnsoLemma} with Theorem~\ref{MainDKKTheorem}.
\end{proof}

\begin{example}\
 \begin{itemize}
\item[(i)] By Proposition~\ref{PXLorentz}  and  \cite{AAW2017}*{Proposition 2.10} (which states that the unit-vector system is a basis of type P for Pisier-Xu spaces),
Theorem~\ref{T5}   applies to $\PX^0_{p,q}$ for
$1<p,q<\infty$.

\item[(ii)] The unit-vector system is a shrinking basis of type P for the  James space $\JJ^{(p)}$.
 It is also known that $\ell_p\lesssim_c \JJ^{(p)}$. Actually, the linear operator $L$  defined  in \eqref{Lifting} is bounded from $\ell_p$ into $\JJ^{(p)}$ and the linear operator $T$ defined  in \eqref{Retraction} is bounded from $\JJ^{(p)}$ into $\ell_p$ (see also \cite{BJL2013}*{Lemma 3.2}).  
So, Theorem~\ref{T5} applies both to $\JJ^{(p)}$ and  $(\JJ^{(p)})^*$.
\end{itemize}
\end{example}

\begin{theorem}\label{T31} Suppose $\XX$ is a Banach space with a basis
and that $(\SSS,\Vert \cdot \Vert_\SSS)$ be a subsymmetric Banach space. Assume that $\SSS\lesssim_c \XX$, that $\SSS$ has nontrivial type,
 and that either $\ell_1 \lesssim_c \XX$ or  $c_0 \lesssim_c \XX$.
Then $\XX$ has an almost greedy basis $\BB_{\kappa}$  whose fundamental function is equivalent to $(\Vert\sum_{j=1}^m \ee_j\Vert_\SSS)_{m=1}^{\infty}$
 and such that $L_m[\BB_{\kappa}]\approx \log m$ for $m\ge 2$.\end{theorem}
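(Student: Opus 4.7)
The plan is to parallel the proof of Theorem~\ref{T5} step for step, but to replace the role played there by Lemma~\ref{AnsoLemma} (which converts a type-$P$ basis into a basis with $L_m\approx m$) by a direct construction that exploits the complemented copy of $\ell_1$ or $c_0$ sitting inside $\XX$. The key observation is that $\ell_1$ and $c_0$ already come equipped with canonical conditional bases, namely the difference system $\DD$ (Lemma~\ref{l1Lemma}) and the summing system $\SSB$ (Lemma~\ref{coLemma}), both of which satisfy $L_m\approx m$. Hence it suffices to graft one of these onto the given basis of $\XX$ and then absorb the extra copy back into $\XX$.

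Suppose first that $\ell_1\lesssim_c\XX$, so that $\XX\approx \ell_1\oplus \YY$ for some Banach space $\YY$. Let $\BB_0$ denote the given basis of $\XX$. Since $L_m[\DD,\ell_1]\approx m$ and $\delta(t)=t$ is non-decreasing and doubling, Lemma~\ref{LemmaOne} gives that $\BB_0\oplus \DD$ is a basis for $\XX\oplus \ell_1$ with $L_m[\BB_0\oplus\DD]\gtrsim m$. Because the unit-vector basis of $\ell_1$ is subsymmetric, Remark~\ref{SubSymSquare} yields $\ell_1\oplus\ell_1\approx \ell_1$, and consequently
\[
\XX\oplus \ell_1\approx \ell_1\oplus \YY\oplus \ell_1\approx \ell_1\oplus \YY\approx \XX.
\]
Transporting $\BB_0\oplus \DD$ through this isomorphism yields a basis $\BB$ of $\XX$ itself with $L_m[\BB]\gtrsim m$. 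The alternative case $c_0\lesssim_c\XX$ is handled identically, using the summing basis $\SSB$ of $c_0$ and the isomorphism $c_0\oplus c_0\approx c_0$.

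It only remains to feed $\BB$ into Theorem~\ref{MainDKKTheorem} together with the subsymmetric sequence space $\SSS$ (which by hypothesis has nontrivial type and satisfies $\SSS\lesssim_c \XX$) and the doubling non-decreasing function $\delta(t)=t$. This yields an almost greedy basis $\BB_{\kappa}$ for $\XX$ whose fundamental function is equivalent to $(\Vert\sum_{j=1}^m \ee_j\Vert_{\SSS})_{m=1}^\infty$ and which satisfies $L_m[\BB_{\kappa}]\gtrsim \log m$ for $m\ge 2$. The matching upper bound comes for free: every almost greedy basis is quasi-greedy, so Theorem~\ref{CharacterizationSR}(a) gives $L_m[\BB_{\kappa}]\le k_m[\BB_{\kappa}]\lesssim \log m$. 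I do not foresee any genuine obstacle; the argument is a pure reassembly of tools already in place. The only mild point to verify is that Lemma~\ref{LemmaOne} transfers the lower bound on $L_m$ from a single summand to the direct sum, which is exactly the content of that lemma.
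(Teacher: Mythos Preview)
Your proof is correct and follows essentially the same route as the paper: use the highly conditional basis $\DD$ (resp.\ $\SSB$) together with Lemma~\ref{LemmaOne} and the absorption $\XX\oplus\ell_1\approx\XX$ (resp.\ $\XX\oplus c_0\approx\XX$) to manufacture a basis of $\XX$ with $L_m\gtrsim m$, then apply Theorem~\ref{MainDKKTheorem}. The only cosmetic point is that Lemma~\ref{LemmaOne} as stated places the $L_m$-hypothesis on the \emph{first} summand, so strictly speaking you should write $\DD\oplus\BB_0$ rather than $\BB_0\oplus\DD$; the symmetry is of course immediate from the interleaved definition of the direct sum and the doubling of $\delta$.
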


\begin{theorem}\label{T1}Let $\XX$ be a Banach space with a basis. If $\ell_1\lesssim_c \XX$ then
$\XX$ has an almost greedy basis  $\BB_{\kappa}$ with fundamental function  equivalent  to $(m)_{m=1}^\infty$
and $L_m[\BB_{\kappa}] \approx \log m$ for $m\ge 2$.
 \end{theorem}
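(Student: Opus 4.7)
The plan is to realize Theorem~\ref{T1} as a direct application of Theorem~\ref{MainDKKTheorem} with $\SSS=\ell_1$, after first building a basis of $\XX$ whose conditionality grows linearly. The key observation is that since $\ell_1\lesssim_c \XX$, we may write $\XX\approx \ell_1\oplus\ZZ$ for some closed subspace $\ZZ$, and then
\[
\ell_1\oplus \XX\approx \ell_1\oplus\ell_1\oplus\ZZ\approx \ell_1\oplus\ZZ\approx \XX,
\]
using $\ell_1\oplus\ell_1\approx\ell_1$. So an absorption trick will let us attach an $\ell_1$-piece without leaving the space.

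Next I would pick any basis $\BB_1$ for $\XX$ (provided by hypothesis) and the difference system $\DD$ of $\ell_1$, which by Lemma~\ref{l1Lemma} satisfies $L_m[\DD,\ell_1]\approx m$. Since $\delta(m)=m$ is non-decreasing and doubling, Lemma~\ref{LemmaOne} yields a basis $\BB:=\DD\oplus\BB_1$ for $\ell_1\oplus\XX\approx \XX$ with $L_m[\BB]\gtrsim m$ for $m\in\NN$. Thus $\XX$ now has a basis $\BB$ whose conditionality functional grows linearly.

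At this point I would invoke Theorem~\ref{MainDKKTheorem} with $\SSS=\ell_1$ (which is explicitly permitted by the first alternative in its hypothesis), with the basis $\BB$ just constructed, and with $\delta(m)=m$. The hypothesis $\SSS\lesssim_c\XX$ is exactly the assumption $\ell_1\lesssim_c\XX$ of Theorem~\ref{T1}, and $\bigl\Vert \sum_{j=1}^n \ee_j\bigr\Vert_{\ell_1}=n$. Thus we obtain an almost greedy basis $\BB_\kappa$ for $\XX$ with fundamental function equivalent to $(m)_{m=1}^\infty$ and with $L_m[\BB_\kappa]\gtrsim \delta(\log m)=\log m$ for $m\ge 2$.

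For the matching upper bound, I would observe that every almost greedy basis is quasi-greedy, so Theorem~\ref{CharacterizationSR}(a) gives $k_m[\BB_\kappa]\lesssim \log m$ for $m\ge 2$, and since $L_m[\BB_\kappa]\le k_m[\BB_\kappa]$, the estimate $L_m[\BB_\kappa]\approx \log m$ follows. There is no significant obstacle here: all the heavy lifting (the DKK construction, the regularity estimates, the quasi-greedy verification) has been packaged into Theorem~\ref{MainDKKTheorem}, and the only task is to manufacture a starting basis on $\XX$ with $L_m\gtrsim m$, which the difference system of $\ell_1$ supplies after the absorption $\ell_1\oplus\XX\approx\XX$.
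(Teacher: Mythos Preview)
Your proof is correct and follows essentially the same route as the paper: absorb an $\ell_1$-summand via $\ell_1\oplus\XX\approx\XX$, use Lemma~\ref{l1Lemma} together with Lemma~\ref{LemmaOne} to obtain a basis of $\XX$ with $L_m\gtrsim m$, and then invoke Theorem~\ref{MainDKKTheorem} with $\SSS=\ell_1$. Your write-up is in fact more explicit than the paper's (which compresses the absorption step and omits the upper-bound remark via Theorem~\ref{CharacterizationSR}(a)), but the argument is the same.
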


\begin{proof}[Proof of Theorems~\ref{T31} and \ref{T1}]
Since $\ell_1\oplus \ell_1\approx \ell_1$ and $c_0\oplus c_0\approx c_0$,
we infer from Lemma~\ref{l1Lemma} (or  Lemma~\ref{Summingc0}) and Lemma~\ref{LemmaOne} that $\XX$ has a basis $\BB$ with $L_m[\BB] \approx m$ for $m\in\NN$. Appealing to Theorem~\ref{MainDKKTheorem}
(we put  $\SSS=\ell_1$ when proving Theorem~\ref{T1}) concludes the proof.
\end{proof}

\begin{example}\
\begin{itemize}
\item[(i)]  Given $1<p<\infty$, Theorem~\ref{T31} applies to the spaces $D_{p,0}$, $D_{p,1}$, $Z_{1,p}$, $Z_{p,1}$, $Z_{p,0}$, $Z_{0,p}$, and the fundamental function of the bases we obtain is $(m^{1/p})_{m=1}^\infty$.

\item[(ii)] Theorem~\ref{T31} also applies to the Hardy space $H_1$ and its predual  $\VMO$, and the almost greedy bases that we obtain have fundamental function equivalent to $(m^{1/2})_{m=1}^\infty$.

\end{itemize}
\end{example}

\begin{example}\
\begin{itemize}
\item[(i)] The list of Banach spaces for which Theorem~\ref{T1} applies includes
 $D_{1,p}$, $Z_{p,1}$ and $Z_{1,p}$ for $p\in\{0\}\cup(1,\infty)$, 
$B_{p,1}$ for $p\in(1,\infty]$, $\ell_1$, $L_1$, the Hardy space $H_1$, and the Lorentz sequence spaces $d_1(\ww)$ for $\ww$ decreasing.
\item[(ii)] By invoking \cite{LinPel1968}*{Proposition 7.3} and  \cite{JRZ1971}*{Theorem 5.1}, 
 Theorem~\ref{T1} applies to any separable $\SL_1$-space. Indeed, since $\SL_1$-spaces are GT-spaces (see \cite{LinPel1968}*{Theorem 4.1}), in light of \cite{DST2012}*{Theorem 4.2}, the conclusion on democracy is redundant for such spaces.
 \end{itemize}
 \end{example}
 
 At this moment, the only reflexive spaces $D_{p,q}$, $Z_{p,q}$ or $B_{p,q}$ not yet in our list of Banach spaces with an almost greedy basis as conditional as possible are Besov spaces $B_{1,q}$ and $B_{\infty,q}$  for $1<q<\infty$.
\begin{theorem}\label{T32} Suppose $\XX$ is a Banach space with a basis so that  either $B_{\infty,q}\lesssim_c\XX$ or $B_{1,q}\lesssim_c\XX$ for some $1<q<\infty$.  Then $\XX$ has an almost greedy basis $\BB_{\kappa}$ with
fundamental function  equivalent to $(m^{1/q})_{m=1}^\infty$  and such that $L_m[\BB_{\kappa}]\approx \log m$ for $m\ge 2$.
\end{theorem}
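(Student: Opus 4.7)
The plan is to apply Theorem~\ref{MainDKKTheorem} with $\SSS=\ell_q$, which for $1<q<\infty$ is a subsymmetric sequence space of nontrivial type whose fundamental function is $(m^{1/q})_{m=1}^\infty$. Fix $p\in\{1,\infty\}$ so that $B_{p,q}\lesssim_c\XX$ by hypothesis.

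First, I would produce a basis in $B_{p,q}$ with $L_m\approx m$. The ingredients are Lemma~\ref{coLemma} (when $p=\infty$) or Lemma~\ref{l1Lemma} (when $p=1$), which provide the summing basis $\SSB$ in $c_0$, respectively the difference basis $\DD$ in $\ell_1$, with $L_m\approx m$ and  $c_0^{(N)}[\SSB]=\ell_\infty^N$, respectively $\ell_1^{(N)}[\DD]=\ell_1^N$. Feeding this into Lemma~\ref{LemmaTwo}(a) with $N_n=2^n$ and exponent $q$ yields a basis for the space $(\oplus_{n=1}^\infty \ell_p^{2^n})_q$, which by the isomorphism \eqref{BesovIso} is isomorphic to $B_{p,q}$, and this basis has $L_m\gtrsim m$.

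Next I would transfer this conditionality to $\XX$ itself. Two structural facts are needed. On the one hand, $\ell_q$ is isometrically complemented in $B_{p,q}=(\oplus_n \ell_p^n)_q$ by picking the first coordinate in each block (the norm in either $\ell_1^n$ or $\ell_\infty^n$ of $\ee_1$ is $1$), so $\ell_q\lesssim_c B_{p,q}\lesssim_c\XX$. On the other hand, splitting the coordinate blocks of $B_{p,q}=(\oplus_n \ell_p^n)_q$ into the odd- and even-indexed ones and using \eqref{BesovIso} once again shows $B_{p,q}\approx B_{p,q}\oplus B_{p,q}$; hence writing $\XX\approx B_{p,q}\oplus\ZZ$ and applying Pe\l czy\'nski's decomposition method gives
\[
B_{p,q}\oplus\XX\approx B_{p,q}\oplus B_{p,q}\oplus\ZZ\approx B_{p,q}\oplus\ZZ\approx\XX.
\]
Combining the basis just constructed on $B_{p,q}$ with any basis of $\XX$ via Lemma~\ref{LemmaOne}(a) therefore yields a basis $\BB$ of $\XX$ with $L_m[\BB]\gtrsim m$.

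Finally, apply Theorem~\ref{MainDKKTheorem} to $\XX$ with the basis $\BB$, the subsymmetric space $\SSS=\ell_q$, and the doubling function $\delta(m)=m$. The conclusion is an almost greedy basis $\BB_\kappa$ for $\XX$ whose fundamental function is equivalent to $(m^{1/q})_{m=1}^\infty$ and which satisfies $L_m[\BB_\kappa]\gtrsim \delta(\log m)=\log m$. The matching upper estimate $L_m[\BB_\kappa]\lesssim \log m$ for $m\ge 2$ is automatic from the quasi-greediness of $\BB_\kappa$ combined with the inequality $L_m[\BB_\kappa]\le k_m[\BB_\kappa]$ and Theorem~\ref{CharacterizationSR}(a). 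The only nontrivial obstacle in this plan is the bookkeeping of the two isomorphisms $B_{p,q}\approx B_{p,q}\oplus B_{p,q}$ and $B_{p,q}\oplus \XX\approx\XX$, both of which are routine once \eqref{BesovIso} is in hand.
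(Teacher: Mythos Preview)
Your argument is correct and actually cleaner than the paper's. Both proofs share the reduction $\XX\approx B_{p,q}\oplus\XX$ (from $B_{p,q}\approx B_{p,q}\oplus B_{p,q}$), but they diverge after that. The paper first reduces to $\XX=B_{p,q}$ via Lemma~\ref{LemmaOne}, and then constructs the almost greedy basis on $B_{p,q}$ in a more roundabout way: it applies the DKK-method to the summing/difference basis to obtain $\YY[\BB,\ell_q,\sigma]$, takes the $\ell_q$-sum of the finite truncations $\YY^{(2^n-1)}[\BB,\ell_q,\sigma]$ (using Lemma~\ref{LemmaTwo} to preserve almost greediness and conditionality), and then must identify the resulting space $\ZZ\approx\VV\oplus B_{p,q}$ with $B_{p,q}$ via a further Pe\l czy\'nski decomposition using $\ell_q(B_{p,q})\approx B_{p,q}$. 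You instead build only a \emph{highly conditional} (not almost greedy) basis on $B_{p,q}$ via Lemma~\ref{LemmaTwo}, push it into $\XX$ by a direct sum, and then invoke Theorem~\ref{MainDKKTheorem} once on $\XX$ itself with $\SSS=\ell_q$; the space identification is already absorbed in that theorem. This avoids the final Pe\l czy\'nski step entirely. One cosmetic point: when you write ``via Lemma~\ref{LemmaOne}(a)'' you are really using the main statement of that lemma (the $L_m$ transfer), not part~(a), which concerns quasi-greediness.
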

\begin{proof}Let $p\in\{1,\infty\}$.  Since $\ell_q\lesssim_c \XX$ and $ \ell_q \oplus B_{p,q}\approx B_{p,q}$,
$\XX$ has an almost greedy basis whose fundamental function is equivalent to
 $(m^{1/q})_{m=1}^\infty$, and we have $\XX\approx  B_{p,q}\oplus\XX$.
 Hence, taking into account Lemma~\ref{LemmaOne}, it suffices to prove the result for
$\XX=B_{p,q}$.    For $p=1$, let  $\BB$  be the difference basis in $\ell_1$, whereas for  $p=\infty$, let  $\BB$ be the summing basis of $c_0$. Let $\sigma=([2^{n-1},2^n-1])_{n=1}^\infty$. By Lemma~\ref{LemmaTwo} and Theorem~\ref{MainDKKTheorem},
the sequence $\bigoplus_{n=1}^\infty (\ee_j)_{j=1}^{2^n-1}$ is an almost greedy basis as desired for $\ZZ=(\bigoplus_{n=1}^\infty 
\YY^{(2^n-1)}[\BB,\ell_q,\sigma])_q$. By Theorem~\ref{FirstProperties}~(c), 
\[
\ZZ\approx \left(\bigoplus_{n=1}^\infty Q_\sigma(\ell_q^{2^n-1}) \oplus \ell_p^n\right)_q
\approx \VV\oplus B_{p,q},\]
where
\[\VV=\left(\bigoplus_{n=1}^\infty Q_\sigma(\ell_q^{2^n-1})\right)_q.
\]
Notice that $\VV\lesssim_c (\bigoplus_{n=1}^\infty \ell_q^{2^n-1} )_q\approx\ell_q $. Moreover, appealing to \eqref{BesovIso} gives
 $\ell_q(B_{p,q})\approx B_{p,q}$.  Hence, applying Pe{\l}czy{\'n}ski's decomposition technique yields $\ZZ\approx  B_{p,q}$.
\end{proof}

\begin{remark}By Propositions 4.1 and 4.4 from \cite{DKKT2003}, the bases obtained in Theorems~\ref{T30}, \ref{T5},
\ref{T31} and \ref{T32} are bi-democratic. Then (see  \cite{DKKT2003}*{Theorem 5.4}) their dual bases also are almost greedy.
\end{remark}

Now, in order to  complete our study, it remains to   deal with non-reflexive Besov spaces $B_{p,0}$. First, we realize that, in this case,  it is hopeless to try to obtain almost greedy bases.

\begin{theorem} Let $1\le p<\infty$.
 Then $B_{p,0}$ has no superdemocratic basis.
 In particular,  $B_{p,0}$ has no almost greedy basis.
\end{theorem}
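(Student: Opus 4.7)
I will argue by contradiction: suppose $\BB=(\xx_j)_{j=1}^{\infty}$ is a super-democratic basis of $\XX:=B_{p,0}=(\oplus_n\ell_p^n)_0$ with fundamental function $(\varphi_m)$, normalized without loss of generality. The proof rests on two structural facts about the $c_0$-sum $\XX$: (i) $\ell_1$ does not embed in $\XX$, and (ii) every normalized weakly null sequence in $\XX$ has a subsequence equivalent to the unit-vector basis of $c_0$. For (i), a diagonalization over the finite-dimensional blocks combined with dominated convergence against functionals in $\XX^*=(\oplus_n\ell_{p'}^n)_1$ produces a weakly Cauchy subsequence of every bounded sequence, so Rosenthal's $\ell_1$-theorem rules out the embedding. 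For (ii), weak nullity forces the projection onto each fixed block $\ell_p^n$ to tend to zero in norm, so a standard gliding-hump selection yields a subsequence that is a small perturbation of a disjointly supported block sequence, and such a block sequence is isometrically $c_0$-equivalent because $\XX$ is a $c_0$-sum.

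By (i) and Rosenthal's theorem, pass to a weakly Cauchy subsequence $(\xx_{j_k})$ of $\BB$ and set $y_k=\xx_{j_{2k}}-\xx_{j_{2k-1}}$; these are weakly null, and super-democracy gives $\|y_k\|\approx\varphi_2$, so $(y_k)$ is semi-normalized. By (ii) we pass to a further subsequence, still denoted $(y_k)$, which is $c_0$-equivalent, so $\|\sum_{k=1}^N y_k\|\le C$ for all $N$. On the other hand, $\sum_{k=1}^N y_k$ is a $\pm 1$ combination of $2N$ distinct basis vectors, so the lower super-democratic estimate gives $\|\sum_{k=1}^N y_k\|\gtrsim\varphi_{2N}$; since $\varphi_m$ is non-decreasing by its definition as a supremum over $|A|\le m$, we conclude that $\varphi_m$ is bounded.

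Once $\varphi$ is bounded, super-democracy yields $\|\sum_{j\in F}\varepsilon_j\xx_j\|\le M$ for some constant $M$, every finite $F\subseteq\NN$ and every choice of signs. A layer-cake decomposition $a_j=\int_0^{|a_j|}\sgn(a_j)\,dt$ (with unit complex phases in the complex case) then gives
\[
\left\|\sum_j a_j\xx_j\right\|\le\int_0^{\|a\|_\infty}\left\|\sum_{\{j:\,|a_j|>t\}}\sgn(a_j)\,\xx_j\right\|dt\le M\|a\|_\infty,
\]
and the reverse estimate $\|a\|_\infty\lesssim\|\sum_j a_j\xx_j\|$ follows from the uniform boundedness of the coordinate functionals $\xx_j^*$. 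Thus $\BB$ is equivalent to the unit-vector basis of $c_0$, and in particular $B_{p,0}\approx c_0$.

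To close, observe that the natural unit-vector basis of $B_{p,0}$ is $1$-unconditional with fundamental function equal to $m^{1/p}$ (extremised by placing all $m$ coordinates inside a single block $\ell_p^n$ with $n\ge m$). If $B_{p,0}\approx c_0$, this would furnish an unconditional basis of $c_0$ with fundamental function of order $m^{1/p}$; but by the Lindenstrauss--Pe\l czy\'nski uniqueness theorem every unconditional basis of $c_0$ is equivalent, up to a permutation, to the standard $c_0$-basis, whose fundamental function is identically $1$. This would force $m^{1/p}\approx 1$, contradicting $p<\infty$. The most delicate point I expect is the perturbation bookkeeping in the gliding-hump step (ii), where the simultaneous control of the leading and trailing block-tails of each $y_k$ requires a careful diagonal choice; the rest of the argument consists of drawing out consequences of super-democracy on the resulting sparse sets.
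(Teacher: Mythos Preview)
Your proof is correct and follows essentially the same strategy as the paper's Proposition~\ref{SteveProp}: a gliding-hump argument shows that a difference subsequence $\xx_{j_{2k}}-\xx_{j_{2k-1}}$ is equivalent to the $c_0$-basis, super-democracy then forces $\varphi_m$ to be bounded, whence $\BB$ is equivalent to the $c_0$-basis, contradicting $B_{p,0}\not\approx c_0$. The only differences are of detail: the paper extracts the subsequence more elementarily via compactness of the projections onto the finite-dimensional blocks (your detour through Rosenthal's theorem is unnecessary since you in effect prove the existence of weakly Cauchy subsequences directly), and the paper leaves as ``straightforward'' both the passage from bounded $\varphi$ to $c_0$-equivalence and the non-isomorphism $B_{p,0}\not\approx c_0$, for which you supply the explicit layer-cake and Lindenstrauss--Pe\l czy\'nski arguments.
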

This will follow immediately from our next Proposition, taking into account that $B_{p,0}$ and $c_0$ are not isomorphic.
\begin{proposition}\label{SteveProp}Let $(\XX_n)_{n=1}^\infty$ be a sequence a finite-dimensional Banach spaces, and $\BB$ be a superdemocratic basic sequence in $\XX=(\oplus_{n=1}^\infty \XX_n)_q$ for some  $q\in\{0\}\cup [1,\infty)$. Then
\begin{enumerate}
\item[(a)] If $q\not=0$ the fundamental function  of $\BB$ is equivalent to $(n^{1/q})_{n=1}^\infty$. 
\item[(b)] If $q=0$ then $\BB$ is equivalent to the unit vector basis of $c_{0}$.
\end{enumerate}
\end{proposition}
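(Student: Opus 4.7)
The plan is to show that super-democracy, combined with the finite-dimensional decomposition of $\XX=(\oplus_{n=1}^{\infty}\XX_n)_q$, forces $\BB$ to inherit the $\ell_q$ (resp.\ $c_0$) block pattern. The strategy has three steps: extract a subsequence (or a difference subsequence), identify its behaviour as that of a disjointly supported block sequence with respect to the natural FDD, and propagate the resulting estimate to the whole of $\BB$ via super-democracy.

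For the extraction I would split according to $q$. If $q=1$, the space $\XX$ has the Schur property (as an $\ell_1$-sum of finite-dimensional spaces), so a semi-normalized basic sequence cannot have a weakly Cauchy subsequence: otherwise it would be norm Cauchy with a limit $y$ satisfying $\|y\|>0$ and $\xx_m^*(y)=0$ for every $m$, forcing $y=0$. Rosenthal's $\ell_1$ theorem then yields a subsequence $(\xx_{j_k})$ equivalent to the unit vector basis of $\ell_1$. If $1<q<\infty$ the space is reflexive, so some subsequence $\xx_{j_k}\rightharpoonup y$ weakly and the consecutive differences $\zz_k=\xx_{j_{2k}}-\xx_{j_{2k-1}}$ form a semi-normalized (by the uniform bound on the biorthogonal functionals) and weakly null sequence. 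If $q=0$ the space $(\oplus\XX_n)_0$ has separable dual $(\oplus\XX_n^*)_1$ and therefore contains no copy of $\ell_1$, so Rosenthal furnishes a weakly Cauchy subsequence, and its differences again yield a semi-normalized, weakly null sequence.

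To the weakly null, semi-normalized sequence so obtained (or to the already $\ell_1$-equivalent subsequence in the $q=1$ case), I would apply the Bessaga--Pe{\l}czy{\'n}ski selection principle relative to the natural FDD of $\XX$: after passing to a further subsequence it becomes equivalent to a normalized, disjointly supported block sequence. By the very definition of the norm on an $\ell_q$-sum (or $c_0$-sum), such a block sequence is isomorphically equivalent to the unit vector basis of $\ell_q$ (resp.\ $c_0$).

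Finally, I would use super-democracy to lift this information to $\BB$. In each case the construction produces, for each $m$, a finite set $A\subseteq\NN$ (of cardinality $m$ or $2m$) and signs $(\varepsilon_j)_{j\in A}$ with $\|\sum_{j\in A}\varepsilon_j\xx_j\|\approx m^{1/q}$ when $q\ne 0$ and uniformly bounded when $q=0$; super-democracy identifies this quantity with $\lambda_{|A|}$ up to constants, proving (a). For (b), the boundedness of $\lambda_m$ together with super-democracy gives $\sup_{A,\,\varepsilon}\|\sum_{j\in A}\varepsilon_j\xx_j\|<\infty$, and an absolutely-convex-hull argument (the closed unit ball of $\ell_\infty(A)$ is the closed convex hull of the sign vectors in the real case, and of the unit-circle vectors in the complex case) promotes this to $\|\sum_j a_j\xx_j\|\lesssim\max_j|a_j|$ for every finite scalar combination. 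The reverse inequality is automatic from the uniform boundedness of the coordinate functionals of a semi-normalized basic sequence, so $\BB$ is equivalent to the unit vector basis of $c_0$. The main obstacle I anticipate is the case split in the extraction step---chiefly, ensuring that Bessaga--Pe{\l}czy{\'n}ski can be applied cleanly, which reduces in all cases to producing a semi-normalized weakly null or $\ell_1$-equivalent subsequence.
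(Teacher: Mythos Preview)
Your argument is correct, and it reaches the same conclusion via the same overall architecture as the paper: extract from $\BB$ a (difference) subsequence that behaves like a disjointly supported block basis of the FDD, read off that it is equivalent to the unit vector basis of $\ell_q$ (or $c_0$), and then invoke super-democracy to pin down $\lambda_m$. The difference lies in the extraction step. You split into three cases according to $q$, invoking the Schur property and Rosenthal's theorem for $q=1$, reflexivity for $1<q<\infty$, and separability of the dual plus Rosenthal for $q=0$. The paper instead gives a single uniform argument that works for all $q$ simultaneously: since each $P_N$ has finite-dimensional range and $\BB$ is bounded, $\{P_N(\xx_j):j\in\NN\}$ is relatively compact, so a diagonal argument produces a subsequence $(\xx_{j_k})$ with $P_N(\xx_{j_k})$ norm-convergent for every $N$; a gliding hump then makes the differences $\xx_{j_{2k}}-\xx_{j_{2k-1}}$ a small perturbation of a disjointly supported sequence. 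This avoids the case split and the appeal to Rosenthal's theorem, at the cost of a slightly more hands-on perturbation argument. Your route has the advantage of naming the structural reason in each regime (Schur, reflexive, $c_0$-like), which is conceptually informative; the paper's route is more elementary and self-contained. Your treatment of part~(b) via the convex-hull argument is more explicit than the paper's, which simply declares it ``straightforward'' once $\varphi_m[\BB]\approx 1$ is known.
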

\begin{proof}  
Given $N\in\NN$, let $P_N\colon\XX\to\XX$ be the canonical projection onto the  first $N$ coordinates.  
With the convention $P_0=0$, put also $P_{M,N}=P_N-P_{M}$  and $P_{M,N}^c=\Id_\XX-P_{M,N}$ for $0 \le M\le N$. Let $\BB=(\xx_j)_{j=1}^\infty$ be a superdemocratic basic sequence in $\XX$.
 Since  the set $\{ P_N(\xx_j) \colon j\in\NN\}$ is compact, we have that for every $N\in\NN$ every subsequence of $\BB$ possesses a further subsequence $(\yy_j)_{j=1}^\infty$ such that $(P_N(\yy_j))_{j=1}^\infty$ converges. Fix a sequence
 $(\varepsilon_k)_{k=1}^\infty$  of positive scalars.
 Applying the gliding-hump technique,
 we infer that there are increasing sequences $(j_k)_{k=1}^\infty$ and $(N_k)_{k=1}^\infty$ of positive integers such that 
  \[
 \left\Vert P_{N_{k-1},N_k}^c (\xx_{j_{2k}}-\xx_{j_{2k-1}})\right\Vert\le \varepsilon_k,
 \]
Then, by the principle of small perturbations (see \cite{AlbiacKalton2016}*{Theorem 1.3.9}), if we choose $(\varepsilon_k)_{k=1}^\infty$ conveniently  and we put
\[
\yy_k=  P_{N_{k-1},N_k} (\xx_{j_{2k}}-\xx_{j_{2k-1}}), \quad k\in\NN,
 \]
 the basic sequences $(\yy_k)_{k=1}^\infty$  and $(\xx_{j_{2k}}-\xx_{j_{2k-1}})_{k=1}^\infty$ are equivalent. In particular, $(\yy_k)_{k=1}^\infty$ is semi-normalized and then, since it is disjointly supported,  it  is  equivalent to the canonical basis of $\ell_q$ ($c_0$ in the case when $q=0$). Therefore
\[
\left\Vert\sum_{k=1}^m \xx_{j_{2k}}-\sum_{k=1}^m \xx_{j_{2k-1}} \right\Vert \approx 
\left\Vert\sum_{k=1}^m \yy_k \right\Vert \approx m^{1/q}, \quad m\in\NN
\]
(with the usual modification when $q=0$). Since $\phi_m[\BB]\le \phi_{2m}[\BB] \le 2 \phi_m[\BB]$, 
the superdemocracy of $\BB$ yields $\phi_m[\BB]\approx m^{1/q}$ for $m\in\NN$ in the case when $q\ge 1$ and
$\phi_m[\BB]\approx 1$ for $m\in\NN$ in the case when $q=0$. 
The proof of (a) is over, and from here (b) is straightforward.
\end{proof}

\begin{theorem}\label{T8}Let $\XX$ be a Banach space with a quasi-greedy basis.  Assume that $B_{p,0}\lesssim_c \XX$ for some $1\le p<\infty$, then $\XX$ has a quasi-greedy basis $\BB_{\kappa}$ with $L_m[\BB_{\kappa}]\approx \log m$ for $m\ge 2$.
\end{theorem}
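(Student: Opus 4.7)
The plan is to reduce to constructing a quasi-greedy basis of $B_{p,0}$ itself with $L_m\approx\log m$, and then to build such a basis by applying Lemma~\ref{LemmaTwo} to the almost-greedy basis of $\ell_1$ produced by Theorem~\ref{T1}.

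\textbf{Reduction step.} The standard absorption $B_{p,0}\oplus B_{p,0}\approx B_{p,0}$ (arising from the freedom in \eqref{BesovIso} to take $d_n=2n$) together with the hypothesis $B_{p,0}\lesssim_c\XX$ yields $B_{p,0}\oplus\XX\approx\XX$. Granted a quasi-greedy basis $\BB_0$ of $B_{p,0}$ with $L_m[\BB_0]\gtrsim\log m$, I would form the direct sum $\BB_0\oplus\BB_\XX$ with the hypothesised quasi-greedy basis $\BB_\XX$ of $\XX$. By Lemma~\ref{LemmaOne}(a) the direct sum is quasi-greedy on $B_{p,0}\oplus\XX$, and the main conclusion of Lemma~\ref{LemmaOne}, applied with the doubling, non-decreasing function $\delta(m)=\log m$, transfers the lower bound $L_m\gtrsim\log m$. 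Transporting this basis along the isomorphism $B_{p,0}\oplus\XX\to\XX$ and pairing it with the universal upper bound $L_m\le k_m\lesssim\log m$ of Theorem~\ref{CharacterizationSR}(a) closes the reduction.

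\textbf{Construction in $B_{p,0}$, case $p=1$.} Apply Theorem~\ref{T1} with $\XX=\ell_1$ to obtain an almost-greedy basis $\BB_\ast$ of $\ell_1$ with $L_m[\BB_\ast]\approx\log m$, produced internally by the DKK-method with $\SSS=\ell_1$, $\BB=\DD$, and $\sigma=([2^{n-1},2^n-1])_{n=1}^\infty$. Theorem~\ref{FirstProperties}(c) identifies the finite-dimensional sections as $\ell_1^{(M_r)}[\BB_\ast]\approx Q_\sigma(\ell_1^{M_r})\oplus\ell_1^r$. The zero-average codimension-one subspace of $\ell_1^k$ is uniformly $2$-isomorphic to $\ell_1^{k-1}$ via the map $(a_2,\dots,a_k)\mapsto\bigl(-\sum_{j\ge 2}a_j,a_2,\dots,a_k\bigr)$, so $Q_\sigma(\ell_1^{M_r})$ is uniformly isomorphic to $\ell_1^{M_r-r}$; consequently the sections $\ell_1^{(M_r)}[\BB_\ast]$ are uniformly isomorphic to $\ell_1^{M_r}$. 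Lemma~\ref{LemmaTwo} with $p=0$ and $N_n=2^n-1$ now yields a quasi-greedy basis of $\bigl(\bigoplus_n\ell_1^{(N_n)}[\BB_\ast]\bigr)_0\approx\bigl(\bigoplus_n\ell_1^{N_n}\bigr)_0\approx B_{1,0}$, with the last isomorphism by \eqref{BesovIso}, and with $L_m\gtrsim\log m$.

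\textbf{Case $1<p<\infty$ and main obstacle.} Pe{\l}czy{\'n}ski's complemented embedding \eqref{PeuRad} combined with \eqref{BesovIso} gives $B_{2,0}\lesssim_c B_{p,0}$, so the reduction step (applied with $B_{2,0}$ in place of $B_{p,0}$) further reduces the task to constructing a quasi-greedy basis of $B_{2,0}$ with $L_m\approx\log m$. This is the technically delicate part of the proof. A naive block-by-block pasting of uniformly quasi-greedy conditional bases of the $\ell_2^n$-factors cannot succeed: by Lemma~\ref{LemmaTwo} with $p=2$ such a family would yield a quasi-greedy basis of $\ell_2$ with $L_m\gtrsim\log m$, contradicting the superreflexivity of $\ell_2$ via Theorem~\ref{CharacterizationSR}(c); equivalently, any basis with uniformly $\ell_2^N$-isomorphic sections is forced to be equivalent to the $\ell_2$ unit vector basis and hence unconditional. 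A natural route around this is to pass through the non-superreflexive space $B_{1,0}$ via Kashin's $\ell_2^n\lesssim_c\ell_1^{2n}$, aiming to transplant an appropriate piece of the basis constructed above for $B_{1,0}$ to produce the required basis of $B_{2,0}$. The hard part is ensuring that such a transplanted basis remains quasi-greedy while preserving the full $\log m$-lower bound on its conditionality constants, which is the main technical hurdle of the proof.
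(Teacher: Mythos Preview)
Your reduction step is correct and matches the paper. Your $p=1$ construction is also correct, though it differs from the paper's: you feed the difference basis $\DD$ of $\ell_1$ into the DKK-method with $\SSS=\ell_1$ and then take $c_0$-sums of the sections, whereas the paper feeds the summing basis $\SSB$ of $c_0$ into the DKK-method with $\SSS=\ell_1$. Both produce quasi-greedy bases on a space whose finite sections are uniformly $\ell_1^k$'s, and Lemma~\ref{LemmaTwo} (with outer index $0$) then yields $B_{1,0}$.

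The genuine gap is the case $1<p<\infty$ (equivalently, after your reduction, $p=2$). You correctly diagnose that no basis $\BB$ with $L_m[\BB]\gtrsim\log m$ can have sections uniformly isomorphic to $\ell_2^N$ (otherwise Lemma~\ref{LemmaTwo} with outer index $2$ would contradict Theorem~\ref{CharacterizationSR}(c)), and you propose a Kashin-type transplant from $B_{1,0}$, but you do not carry it out; as stated, the $p=2$ case is left open.

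The paper's resolution is not a Kashin argument but a change of input to the DKK-method. One takes $\SSS=\ell_2$ and uses the \emph{summing basis} $\SSB$ of $c_0$ as the input basis $\BB$. Since $(\Lambda_n)=(n^{1/2})$ has both the LRP and the URP, Theorem~\ref{QGTheorem} makes the unit-vector system almost greedy (hence quasi-greedy) for $\YY[\SSB,\ell_2,\sigma]$, and Proposition~\ref{ConditionalityConstantsRelationTwo} together with Lemma~\ref{coLemma} gives $L_m\gtrsim\log m$. Lemma~\ref{LemmaTwo} (outer index $0$) then produces a quasi-greedy basis of $\bigl(\bigoplus_{n}\YY^{(2^n-1)}[\SSB,\ell_2,\sigma]\bigr)_0$ with $L_m\gtrsim\log m$. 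By Theorem~\ref{FirstProperties}(c) and Lemma~\ref{coLemma} each section splits as $Q_\sigma(\ell_2^{2^n-1})\oplus\ell_\infty^n$; the first summand is a Hilbert space of dimension $2^n-1-n$, so the $c_0$-sum is isomorphic to $B_{2,0}\oplus c_0\approx B_{2,0}$. The same construction with $\SSS=\ell_1$ handles $p=1$ uniformly. This sidesteps your obstruction precisely because the sections carry an $\ell_\infty^n$ piece and are therefore \emph{not} uniformly $\ell_2^N$'s, so no contradiction with superreflexivity arises.
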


\begin{proof}The relations \eqref{PeuRad} and \eqref{BesovIso} give $B_{2,0}\lesssim_c B_{p,0}$ for $1<p<\infty$. Moreover, since $B_{p,0}\approx B_{p,0}\oplus B_{p,0}$ we have $B_{p,0}\oplus\XX\approx \XX$. Then,  by Lemma~\ref{LemmaOne}, it suffices show to the existence of a quasi-greedy basis as conditional as possible  in the space $B_{p,0}$  in the case when $p\in\{1,2\}$.
Let   $\sigma=([2^{n-1},2^n-1])_{n=1}^\infty$ and $\SSB$ be the summing basis of $c_0$. 
Combining   Lemma~\ref{Summingc0}, Lemma~\ref{LemmaTwo} and Theorem~\ref{MainDKKTheorem} we obtain that 
$\BB_{\kappa}=(\bigoplus_{n=1}^\infty   (\ee_j)_{j=1}^{2^n-1})_0$ is a quasi-greedy basis for the space
\[
\ZZ:=\left(\bigoplus_{n=1}^\infty  \YY^{(2^n-1)}[\SSB,\ell_p,\sigma]\right)_0
\] such that $L_m[\BB_{\kappa}]\approx \log m$
for $m\ge 2$.  
By Theorem~\ref{FirstProperties}~(c), if we put 
\[
\VV=\left(\bigoplus_{n=1}^\infty Q_\sigma(\ell_p^{2^n-1})\right)_0,
\]
we have
\[
\ZZ\approx \VV\oplus (\oplus_{n=1}^\infty \ell_\infty^n)_0\approx \VV\oplus c_0\approx\VV.
\]
In the case when $p=2$ we have $Q_\sigma(\ell_q^{2^n-1}) \approx \ell_2^{2^n-n-1}$ for $n\in\NN$. 
 Assume that $p=1$ and let $B=\NN\setminus\{2^n-1\colon n\in\NN\}$.   The coordinate projection $S_B$ restricts
 to an isomorphism from $Q_\sigma(\ell_1)$ onto
\[
\WW=\left\{ a_n)_{n=1}^\infty \in\ell_1 \colon a_{2^n-1}=0 \text{ for all } n\in \NN\right\}.
\]
Hence, $Q_\sigma(\ell_1^{2^n-1})\approx \ell_1^{2^n-n-1}$ for $n\in\NN$. In both cases,
 taking into account \eqref{BesovIso}, we have $\VV\approx (\bigoplus_{n=1}^\infty \ell_p^{2^n-n-1}))_0\approx B_{p,0}$.
\end{proof}

\begin{bibsection}
\begin{biblist}

\bib{AA2015}{article}{
   author={Albiac, F.},
   author={Ansorena, J.~L.},
   title={Lorentz spaces and embeddings induced by almost greedy bases in
   Banach spaces},
   journal={Constr. Approx.},
   volume={43},
   date={2016},
   number={2},
   pages={197--215},
}

\bib{AA2016}{article}{
   author={Albiac, F.},
   author={Ansorena, J.~L.},
   title={The isomorphic classification of Besov spaces over $\mathbb{R}^d$
   revisited},
   journal={Banach J. Math. Anal.},
   volume={10},
   date={2016},
   number={1},
   pages={108--119},
}

\bib{AA2017}{article}{
   author={Albiac, F.},
   author={Ansorena, J.~L.},
   title={Isomorphic classification of mixed sequence spaces and of Besov
   spaces over $[0,1]^d$},
   journal={Math. Nachr.},
   volume={290},
   date={2017},
   number={8-9},
   pages={1177--1186},
}

\bib{AA2017bis}{article}{
   author={Albiac, F.},
   author={Ansorena, J.~L.},
   title={Characterization of 1-almost greedy bases},
   journal={Rev. Mat. Complut.},
   volume={30},
   date={2017},
   number={1},
   pages={13--24},
}

\bib{AADK2016}{article}{
   author={Albiac, F.},
   author={Ansorena, J.~L.},
   author={Dilworth, S. J.},
   author={Kutzarova, Denka},
   title={Banach spaces with a unique greedy basis},
   journal={J. Approx. Theory},
   volume={210},
   date={2016},
   pages={80--102},
}

\bib{AAGHR2015}{article}{
 author={Albiac, F.},
 author={Ansorena, J.~L.},
 author={Garrig{\'o}s, G.},
 author={Hern{\'a}ndez, E.},
 author={Raja, M.},
 title={Conditionality constants of quasi-greedy bases in super-reflexive
 Banach spaces},
 journal={Studia Math.},
 volume={227},
 date={2015},
 number={2},
 pages={133--140},
}

\bib{AAW2017}{article}{
   author={Albiac, F.},
   author={Ansorena, J.~L.},
   author={Wojtaszczyk, P.},
  title={Conditionality constants of quasi-greedy bases in non-superreflexive Banach space},
 journal={Constr. Approx.},
   doi= {10.1007/s00365-017-9399-x},
}

\bib{AlbiacKalton2016}{book}{
 author={Albiac, F.},
 author={Kalton, N.~J.},
 title={Topics in Banach space theory, 2nd revised and updated edition},
 series={Graduate Texts in Mathematics},
 volume={233},
 publisher={Springer International Publishing},
 date={2016},
 pages={xx+508},
 }

\bib{Al75}{article}{
   author={Altshuler, Z.},
   title={Uniform convexity in Lorentz sequence spaces},
   journal={Israel J. Math.},
   volume={20},
   date={1975},
   number={3-4},
   pages={260--274},
}

\bib{Allen1978}{article}{
 author={Allen, G.~D.},
  title={Duals of Lorentz spaces},
  journal={ Pacific J. Math.},
  volume={177},
 date={1978},
 pages={287--291},
}

\bib{AnsoQM}{article}{
   author={Ansorena, J.~L.},
   title={A note on subsymmetric renormings of Banach spaces},
  journal={Quaest. Math.},
   doi={10.2989/16073606.2017.1393704},
}

\bib{BBGHO2018}{article}{
author={Bern\'a, P.},
 author={Blasco, O.},
 author={Garrig{\'o}s, G.},
 author={Hern{\'a}ndez, E.},
 author={Oikhberg, T.},
 title={The greedy algorithm for non-greedy type bases},
 journal={Submitted},
 }

\bib{BenSha1988}{book}{
 author={Bennett, C.},
 author={Sharpley, R.},
 title={Interpolation of operators},
 series={Pure and Applied Mathematics},
 volume={129},
 publisher={Academic Press, Inc., Boston, MA},
 date={1988},
 pages={xiv+469},
}

\bib{BJL2013}{article}{
   author={Bird, A.},
   author={Jameson, G.},
   author={Laustsen, N.},
   title={The Giesy-James theorem for general index $p$, with an application
   to operator ideals on the $p$th James space},
   journal={J. Operator Theory},
   volume={70},
   date={2013},
   number={1},
   pages={291--307},
   issn={0379-4024},
   review={\MR{3085829}},
}

\bib{CRS2007}{article}{
   author={Carro, M.J.},
   author={Raposo, J.A.},
   author={Soria, J.},
   title={Recent developments in the theory of Lorentz spaces and weighted
   inequalities},
   journal={Mem. Amer. Math. Soc.},
   volume={187},
   date={2007},
   number={877},
   pages={xii+128},
}

\bib{DKK2003}{article}{
 author={Dilworth, S.~J.},
 author={Kalton, N.~J.},
 author={Kutzarova, D.},
 title={On the existence of almost greedy bases in Banach spaces},
 journal={Studia Math.},
 volume={159},
 date={2003},
 number={1},
 pages={67--101},
}

\bib{DKKT2003}{article}{
 author={Dilworth, S.~J.},
 author={Kalton, N.~J.},
 author={Kutzarova, D.},
 author={Temlyakov, V.~N.},
 title={The thresholding greedy algorithm, greedy bases, and duality},
 journal={Constr. Approx.},
 volume={19},
 date={2003},
 number={4},
 pages={575--597},
}

\bib{DilworthMitra2001}{article}{
   author={Dilworth, S.~J.},
   author={Mitra, D.},
   title={A conditional quasi-greedy basis of $l_1$},
   journal={Studia Math.},
   volume={144},
   date={2001},
   number={1},
   pages={95--100},
}

\bib{DKW2002}{article}{
   author={Dilworth, S.~J.},
   author={Kutzarova, D.},
   author={Wojtaszczyk, P.},
   title={On approximate $l_1$ systems in Banach spaces},
   journal={J. Approx. Theory},
   volume={114},
   date={2002},
   number={2},
   pages={214--241},
}

\bib{DST2012}{article}{
   author={Dilworth, S.~J.},
   author={Soto-Bajo, M.},
   author={Temlyakov, V.~N.},
   title={Quasi-greedy bases and Lebesgue-type inequalities},
   journal={Studia Math.},
   volume={211},
   date={2012},
   number={1},
   pages={41--69},
}

\bib{Donoho}{article}{
   author={Donoho, D. L.},
   title={Unconditional bases are optimal bases for data compression and for
   statistical estimation},
   journal={Appl. Comput. Harmon. Anal.},
   volume={1},
   date={1993},
   number={1},
   pages={100--115},
}

\bib{D1960}{article}{
author={Dvoretzky, A.}
title={Some results on convex bodies and Banach spaces}
journal={Proc. Internat. Sympos. Linear Spaces (Jerusalem, 1960)}
date={1961}
pages={123-160}
}

\bib{GHO2013}{article}{
 author={Garrig{\'o}s, G.},
 author={Hern{\'a}ndez, E.},
 author={Oikhberg, T.},
 title={Lebesgue-type inequalities for quasi-greedy bases},
 journal={Constr. Approx.},
 volume={38},
 date={2013},
 number={3},
 pages={447--470},
 }

\bib{GW2014}{article}{
 author={Garrig{\'o}s, G.},
 author={Wojtaszczyk, P.},
 title={Conditional quasi-greedy bases in Hilbert and Banach spaces},
 journal={Indiana Univ. Math. J.},
 volume={63},
 date={2014},
 number={4},
 pages={1017--1036},
 }
 
 \bib{James1950}{article}{
   author={James, R.~C.},
   title={Bases and reflexivity of Banach spaces},
   journal={Ann. of Math. (2)},
   volume={52},
   date={1950},
   pages={518--527},
}

\bib{JohnsonOdell1974}{article}{
   author={Johnson, W.~B.},
   author={Odell, E.},
   title={Subspaces of $L_{p}$ which embed into $l_{p}$},
   journal={Compositio Math.},
   volume={28},
   date={1974},
   pages={37--49},
}
 
 \bib{JRZ1971}{article}{
   author={Johnson, W.~B.},
   author={Rosenthal, H.~P.},
   author={Zippin, M.},
   title={On bases, finite dimensional decompositions and weaker structures
   in Banach spaces},
   journal={Israel J. Math.},
   volume={9},
   date={1971},
   pages={488--506},
}

\bib{Gogyan2010}{article}{
   author={Gogyan, S.},
   title={An example of an almost greedy basis in $L^1(0,1)$},
   journal={Proc. Amer. Math. Soc.},
   volume={138},
   date={2010},
   number={4},
   pages={1425--1432},
}

\bib{KP1962}{article}{
   author={Kadec, M.~I.},
   author={Pe\l czy\'nski, A.},
   title={Bases, lacunary sequences and complemented subspaces in the spaces
   $L_{p}$},
   journal={Studia Math.},
   volume={21},
   date={1961/1962},
   pages={161--176},
}
\bib{KonyaginTemlyakov1999}{article}{
   author={Konyagin, S. V.},
   author={Temlyakov, V. N.},
   title={A remark on greedy approximation in Banach spaces},
   journal={East J. Approx.},
   volume={5},
   date={1999},
   number={3},
   pages={365--379},
}

\bib{LinPel1968}{article}{
   author={Lindenstrauss, J.},
   author={Pe{\l}czy{\'n}ski, A.},
   title={Absolutely summing operators in $\LL_{p}$-spaces and their
   applications},
   journal={Studia Math.},
   volume={29},
   date={1968},
   pages={275--326},
}

\bib{LinTza1977}{book}{
 author={Lindenstrauss, J.},
 author={Tzafriri, L.},
 title={Classical Banach spaces. I},
 note={Sequence spaces;
 Ergebnisse der Mathematik und ihrer Grenzgebiete, Vol. 92},
 publisher={Springer-Verlag, Berlin-New York},
 date={1977},
 pages={xiii+188},
}

\bib{LinTza1972}{article}{
 author={Lindenstrauss, J.},
 author={Tzafriri, L.},
title={On Orlicz sequence spaces II},
journal={Israel J. Math.},
volume={11},
year={1972},
pages={355--379},
}

\bib{Pel1960}{article}{
 author={Pe{\l}czy{\'n}ski, A.},
 title={Projections in certain Banach spaces},
 journal={Studia Math.},
 volume={19},
 date={1960},
 pages={209--228},
}

\bib{PisierXu1987}{article}{
 author={Pisier, G.},
 author={Xu, Q.~H.},
 title={Random series in the real interpolation spaces between the spaces $v_p$},
 conference={
 title={Geometrical aspects of functional analysis (1985/86)},
 },
 book={
 series={Lecture Notes in Math.},
 volume={1267},
 publisher={Springer, Berlin},
 },
 date={1987},
 pages={185--209},
 }

\bib{Singer1970}{book}{
 author={Singer, I.},
 title={Bases in Banach spaces. I},
 note={Die Grundlehren der mathematischen Wissenschaften, Band 154},
 publisher={Springer-Verlag, New York-Berlin},
 date={1970},
 pages={viii+668},
}

\bib{TemlyakovYangYe2011}{article}{
 author={Temlyakov, V.~N.},
 author={Yang, M.},
 author={Ye, P.},
 title={Lebesgue-type inequalities for greedy approximation with respect
 to quasi-greedy bases},
 journal={East J. Approx.},
 volume={17},
 date={2011},
 number={2},
 pages={203--214},
}

\bib{TemlyakovYangYeB}{article}{
   author={Temlyakov, V.~N.},
   author={Yang, M.},
   author={Ye, P.},
   title={Greedy approximation with regard to non-greedy bases},
   journal={Adv. Comput. Math.},
   volume={34},
   date={2011},
   number={3},
   pages={319--337},
}

\bib{Wo2000}{article}{
 author={Wojtaszczyk, P.},
 title={Greedy algorithm for general biorthogonal systems},
 journal={J. Approx. Theory},
 volume={107},
 date={2000},
 number={2},
 pages={293--314},
}

\end{biblist}
\end{bibsection}

\end{document}